\newtheorem{theorem}{Theorem}[section]
\newtheorem{definition}[theorem]{Definition}
\newtheorem{lemma}[theorem]{Lemma}
\newtheorem{proposition}[theorem]{Proposition}
\newtheorem{remark}[theorem]{Remark}
\begin{document}

\newcommand\bnode[3][2]{\node (#2) at (0, 0) {}; \fill (0, 0) circle (1pt) node[#1] {#3};}

\title{Large deviations for the largest singular value of sparse non-hermitian matrices}
%\date{\today}
\author[Hyungwon Han]{Hyungwon Han}
\address{Department of Mathematical Sciences, KAIST, South Korea}
\email{measure@kaist.ac.kr}

\begin{abstract}
We prove a large deviation principle for the largest singular value of sparse non-Hermitian random matrices, or directed Erd\H{o}s-R\'enyi networks in the constant average degree regime $p =\frac{d}{n}$ where $d$ is fixed. 
Entries are assumed to have Weibull distributions with tail decaying at rate $e^{-t^{\alpha}}$ for $\alpha >0$.
While the law of large number results agree with the largest eigenvalue of sparse Hermitian matrices given in \cite{ghn22, gn22}, large deviation results are surprisingly simpler, exhibiting a single transition at $\alpha =2$.
The rate function for undirected networks with $0<\alpha \leq 2$ involved a transition at $\alpha =1$ and a complicated variational formula due to the emergence of cliques with large edge-weights \cite{ghn22}. For directed networks, we introduce a clique reduction technique which reformulates the problem for undirected networks with maximum clique size $2$, and the rate function is greatly simplified.
For $\alpha >2$, both the law of large numbers and large deviation results are identical to the sparse Hermitian case.
Our results easily generalize to rectangular i.i.d. ensembles.
\end{abstract} 

\maketitle

\tableofcontents

\let\thefootnote\relax
%\footnotetext{MSC2020: Primary 00A05, Secondary 00A66.} %%%%%%%%%%

\bigskip

\section{Introduction}

\subsection{Main results} \label{main results}

Random graphs and networks are of great importance in various areas of science and mathematics. Especially, the Erd\H{o}s-R\'enyi graph $\mathcal{G}(n,p)$ where each edge among $n$ vertices is selected with independent probability $p$ has been thoroughly analyzed over the past years. The spectral observables of adjacency matrices of weighted $\mathcal{G}(n,p)$ or sparse Hermitian matrices including the empirical measure and extremal eigenvalues are known to exhibit universality in the supercritical regime $p\gg \frac{\log n}{n}$. In the constant average degree regime $p= \frac{d}{n}$ where $d$ is fixed, universality no longer holds and the typical value for the largest eigenvalue of sparse Wigner matrices depends on entry distributions as computed in \cite{bbg21, gn22, ghn22}. An interesting feature of these results is that large deviation properties are readily deduced from the structural analysis of $\mathcal{G}(n,p)$, which was unusual in the study of dense random matrices.

A natural counterpart to $\mathcal{G}(n,p)$ is the Erd\H{o}s-R\'enyi digraph $\mathcal{G}_d(n,p)$ where each directed edge is chosen with independent probability $p$. The adjacency matrices of weighted $\mathcal{G}_d(n,p)$ correspond to sparse i.i.d. or non-Hermitian matrices. The spectral analysis of sparse non-Hermitian random matrices is difficult compared to the Hermitian case due to spectral instability and the lack of structural results for sparse digraphs. Only recently some results were proven for extremal eigenvalues of unweighted $\mathcal{G}_d(n,p)$ in \cite{bbk20, he23}. 

This paper aims to extend the spectral large deviation results for sparse Hermitian matrices in \cite{ghn22} toward sparse non-Hermitian matrices with general entries in the regime $p= \frac{d}{n}$. We focus on the largest singular value of the adjacency matrix of Erd\H{o}s-R\'enyi digraphs with general edge weights. Given an $n\times n$ matrix $Z$, let $\sigma_1 (Z)$ denote the largest singular value of $Z$ which is equal to the operator norm
\begin{align}
\| Z\|_{\mathrm{op}} =\sup_{\| v\|_2 =1}\| Zv\|_2 .
\end{align}
Our results will be stated in terms of this operator or spectral norm which we denote by $\| \cdot \|$ for simplicity.
We first state the large deviation results for square matrices then provide an extension to rectangular matrices in Section \ref{rectangular section}.

\vspace{12pt}

Consider the Erd\H{o}s-R\'enyi digraph $\mathcal{G}_{d}(n,p)$ with vertex set $[n]:= \lbrace 1,\ldots ,n\rbrace$ and each directed edge existing with probability $p$ independent of each other. Assume that $p=\frac{d}{n}$ where $d>0$ is a fixed constant. Then the adjacency matrix $X$ of $\mathcal{G}_{d}(n,\frac{d}{n})$ has i.i.d. Bernoulli entries with mean $\frac{d}{n}$. Let $Y$ be an $n\times n$ i.i.d. matrix with $\mathbb{E}Y_{ij}=0$ and $\mathbb{E}|Y_{ij}|^2=1$ for $1\leq i,j\leq n$ and define $Z=X\odot Y$ by the entrywise product $Z_{ij}=X_{ij}Y_{ij}$. We assume entries $Y_{ij}$ to have the Weibull distribution:

\begin{definition}
A random variable $W$ is said to have the Weibull distribution with shape parameter $\alpha >0$ if there exist constants $C_1,C_2>0$ such that for all $t\geq 1$,
\begin{align}
C_1 e^{-t^{\alpha}} \leq \mathbb{P}(|W| \geq t) \leq C_2 e^{-t^{\alpha}} .
\end{align}
\end{definition}

Both the law of large numbers and large deviation results for the largest singular value exhibit a single transition at the critical value $\alpha =2$. We divide our discussion into light-tailed ($\alpha >2$) and heavy-tailed ($\alpha \leq 2$) cases. Note that Gaussian weights ($\alpha =2$), treated separately in \cite{gn22} for sparse Hermitian matrices, can be absorbed in the heavy-tailed case after minor modifications.

\subsubsection{Light-tailed case, $\alpha >2$}

Let
\begin{align} \label{lambda light}
\lambda_{\alpha}^{\mathrm{light}} := 2^{\frac{1}{\alpha}}\alpha^{-\frac{1}{2}}(\alpha -2)^{\frac{1}{2}-\frac{1}{\alpha}}\frac{(\log n)^{\frac{1}{2}}}{(\log \log n)^{\frac{1}{2}-\frac{1}{\alpha}}} .
\end{align}
This expression will be the typical value of $\| Z\|$ for the light-tailed case. The following two theorems are the upper and lower tail large deviation results, respectively:

\begin{restatable}{theorem}{ltut} \label{ltut}
For any $\delta >0$,
\begin{align*}
\lim_{n\to\infty} -\frac{\log \mathbb{P} \left( \| Z\| \geq (1+\delta )\lambda_{\alpha}^{\mathrm{light}}\right) }{\log n}= (1+\delta )^2 -1.
\end{align*}
\end{restatable}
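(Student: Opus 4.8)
The plan is to prove the two matching one-sided estimates: a construction giving $\mathbb{P}\big(\|Z\| \geq (1+\delta)\lambda\big) \geq n^{-((1+\delta)^2-1)-o(1)}$, and a union bound giving the reverse, where I abbreviate $\lambda := \lambda_\alpha^{\mathrm{light}}$ and set $c^* := \frac{\alpha-2}{\alpha}$. The guiding picture (inherited from \cite{bbg21,ghn22}) is that $\|Z\|$ is driven by ``stars'': a vertex $v$ of out-degree $k$ with incident weights $w_1,\dots,w_k$ contributes $\sqrt{\sum w_i^2}$ to the norm (and likewise for in-degree), and since for a sparse tree-like digraph $ZZ^*$ is nearly diagonal, the norm is essentially $\max_v$ of such row/column quantities. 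The typical value $\lambda$ arises from $k \asymp c^* \frac{\log n}{\log\log n}$ and $w_i \asymp \lambda/\sqrt{k}$; the extremal configuration for the deviation is the same star with $k \asymp (1+\delta)^2 c^* \frac{\log n}{\log\log n}$ edges, the enlarged degree being paid for by forcing a vertex of atypically high out-degree.

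\emph{Lower bound.} Take $k = \lceil (1+\delta)^2 c^* \log n / \log\log n\rceil$ and, for each vertex $v$, let $E_v$ be the event that $\deg^+(v)=k$ and $|Y_{vj}| \geq (1+\delta)\lambda/\sqrt{k}$ for all $j\in N^+(v)$; on $E_v$ the $v$-th row of $Z$ has Euclidean norm $\geq (1+\delta)\lambda$, so $\|Z\|\geq(1+\delta)\lambda$. Since out-edge sets of distinct vertices are disjoint as sets of ordered pairs, the $E_v$ are independent, hence $\mathbb{P}(\bigcup_v E_v)\geq 1-\exp(-n\,\mathbb{P}(E_1))\gtrsim n\,\mathbb{P}(E_1)$ once $n\,\mathbb{P}(E_1)\to 0$. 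Using $-\log\mathbb{P}(\mathrm{Bin}(n-1,\tfrac dn)=k) = k\log k + O(k)$ and the Weibull lower tail $\mathbb{P}(|Y|\geq t)\geq C_1 e^{-t^\alpha}$, one gets $-\log\big(n\,\mathbb{P}(E_1)\big) \leq -\log n + k\log k + (1+\delta)^\alpha\lambda^\alpha k^{1-\alpha/2} + o(\log n)$; inserting the chosen $k$ and the algebraic identities satisfied by $\lambda$ (which encode that $c^*$ is the maximizer of $c\mapsto (1-c)^{1/\alpha}c^{1/2-1/\alpha}$) collapses this to $\big((1+\delta)^2-1\big)\log n + o(\log n)$. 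Thus $\limsup_n -\log\mathbb{P}(\|Z\|\geq(1+\delta)\lambda)/\log n \leq (1+\delta)^2-1$.

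\emph{Upper bound.} Fix a slowly growing threshold $K=K_n\to\infty$ and split $Z=A+B$ with $A_{ij}=Z_{ij}\mathds{1}_{|Y_{ij}|\leq K}$. The truncated part $A$ is a sparse matrix with entries at most $K=n^{o(1)}$, and an a priori operator-norm bound for sparse i.i.d.\ matrices (a moment/trace estimate, or as cited) gives $\|A\|=o(\lambda)$ with probability $1-o(n^{-C})$ for every $C$, so on the relevant event $\|B\|\geq(1+\delta-o(1))\lambda$. The support of $B$ is a digraph of edge density $\frac dn\mathbb{P}(|Y|>K)\ll n^{-1}$, and a structure theorem for such strongly subcritical sparse digraphs shows that, with probability $1-o(n^{-C})$, every connected component of its underlying graph is a tree with a bounded number of edges, and that the only components whose operator norm is not $o(\lambda)$ are out-stars and in-stars, whose norm is $\sqrt{\sum_{u\in N^+(v)}|Z_{vu}|^2}$ (resp.\ the in-star version) up to a lower-order error. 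It therefore suffices, for arbitrary fixed $\epsilon>0$, to bound the probability that some vertex $v$ has $\sum_{u\in N^+(v)}|Z_{vu}|^2\geq(1-\epsilon)(1+\delta)^2\lambda^2$, and the in-neighbourhood analogue. Conditioning on the out-degree,
\begin{align*}
\mathbb{P}\Big(\exists\,v:\ \sum_{u\in N^+(v)}|Z_{vu}|^2\geq(1-\epsilon)(1+\delta)^2\lambda^2\Big)\ \leq\ n\sum_{k\geq1}\binom{n-1}{k}\Big(\frac dn\Big)^k\,\mathbb{P}\Big(\sum_{i=1}^k Y_i^2\geq(1-\epsilon)(1+\delta)^2\lambda^2\Big).
\end{align*}
Here $\alpha>2$ is used: for super-exponential $Y_i^2$ the balanced large deviation applies, $-\log\mathbb{P}(\sum_{i=1}^kY_i^2\geq M)=(1+o(1))M^{\alpha/2}k^{1-\alpha/2}$ as $M/k\to\infty$, uniformly over $k\leq C\log n/\log\log n$. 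Combined with $\binom{n-1}{k}(d/n)^k\leq e^{-k\log k+O(k)}$, the sum becomes $n\exp\!\big(-(1-o(1))\min_k[\,k\log k + (1-\epsilon)^{\alpha/2}(1+\delta)^\alpha\lambda^\alpha k^{1-\alpha/2}\,]\big)$; writing $k=c\log n/\log\log n$ and using the identities defining $\lambda$, the minimand is $\log n\,[\,c + (1-\epsilon)^{\alpha/2}(1+\delta)^\alpha(1-c^*)(c^*)^{\alpha/2-1}c^{1-\alpha/2}\,](1+o(1))$, whose minimum over $c$ equals $(1-\epsilon)(1+\delta)^2\log n$ to leading order in $\epsilon$, attained near $c=(1+\delta)^2c^*$. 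Hence the probability is $\leq n^{1-(1-\epsilon)(1+\delta)^2+o(1)}$; sending $\epsilon\downarrow0$ after $n\to\infty$ gives $\liminf_n -\log\mathbb{P}(\|Z\|\geq(1+\delta)\lambda)/\log n\geq(1+\delta)^2-1$, matching the lower bound.

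\emph{Main obstacle.} The delicate step is the structural reduction in the upper bound: proving that, with probability $1-o(n^{-C})$, the large-entry digraph $\mathrm{supp}(B)$ is a disjoint union of small trees, and that among these only pure in-/out-stars produce an operator norm comparable to $\lambda$ (paths, two stars joined by an edge, and branched trees all contribute $o(\lambda)$). This needs a quantitative sparse-digraph structure theorem at edge density $n^{-1-o(1)}$ together with careful operator-norm estimates for small weighted digraphs — in particular controlling the interaction of a high-degree vertex with its second out-neighbourhood, i.e.\ showing $ZZ^*$ is near-diagonal on $\mathrm{supp}(B)$. A secondary point is extracting the sharp constant (not just the exponent) in the balanced tail bound for $\sum Y_i^2$ uniformly over the growing range of $k$, and verifying that the truncated part's norm is genuinely $o(\lambda)=o(\sqrt{\log n})$.
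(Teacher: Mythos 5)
Your lower bound is correct and in fact slightly cleaner than the paper's: by directly computing $\mathbb{P}(E_v)$ for the ``planted out-star'' event and multiplying by the number of vertices, you sidestep the case split on whether $\gamma_\delta=(1+\delta)^2(1-\tfrac2\alpha)$ is below or above $1$ (the paper's Cases 1 and 2 in \S 4.1.1). Your guiding picture for the upper bound also matches the paper's: truncate, handle the bounded part by an a priori norm bound on the support, decompose the large-weight digraph into components, and argue that only stars matter.

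However, the upper bound has two genuine gaps that you flag but do not fill, and they are precisely where the paper does its technical work. First, the claim $\|A\|=o(\lambda)$ with probability $1-o(n^{-C})$ is not available: to get $\|A\|=o(\lambda)$ via $\|A\|\le K\|X\|$ you must take $K_n=o\bigl((\log\log n)^{1/\alpha}\bigr)$, but then the edge density of $\mathrm{supp}(B)$ is $\tfrac dn e^{-K_n^\alpha}$ which is no longer $O\bigl(n^{-1}(\log n)^{-\varepsilon}\bigr)$ for any fixed $\varepsilon>0$, so the structural lemmas controlling component size and excess (Lemmas \ref{c}, \ref{e}) degrade. The paper instead fixes $\varepsilon$, obtains $\|Z^{(2)}\|\lesssim\varepsilon^{1/\alpha}\lambda$ via Lemma \ref{light tail negligible} with probability $1-n^{1-(1+\delta)^2+o(1)}$, and only sends $\varepsilon\to0$ after $n\to\infty$; your argument should be rephrased in this two-limit form. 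Second, and more importantly, the assertion that non-star components of $\mathrm{supp}(B)$ have operator norm $o(\lambda)$ is exactly the content of Proposition \ref{infinity}, and it is not a black-box ``sparse digraph structure theorem'': the paper proves it by first applying Lemma \ref{star decomposition} to split off a vertex-disjoint union of stars, and then using the clique-reduction construction of Proposition \ref{reduction of cliques} to turn each remaining directed component into an undirected triangle-free network whose excess, size, and degree are controlled, so that \cite[Proposition 5.5]{ghn22} applies. Without that reduction step, a component such as two overlapping stars or a branched tree is not obviously dominated by $\max_v\bigl(\sum_{u\in N^{\pm}(v)}|Z_{vu}|^2\bigr)^{1/2}$, so your final union bound over vertices does not by itself control $\|B\|$. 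Your ``Main obstacle'' paragraph correctly locates this hole; it is the main new ingredient that the paper supplies.
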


\begin{restatable}{theorem}{ltlt} \label{ltlt}
For any $0<\delta <1$,
\begin{align*}
\lim_{n\to\infty} \frac{1}{\log n}\left( \log \log \frac{1}{\mathbb{P}(\| Z\|\leq (1-\delta )\lambda_{\alpha}^{\mathrm{light}})}\right) =1-(1-\delta )^2 .
\end{align*}
\end{restatable}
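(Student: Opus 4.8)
The plan is to show that the event $\{\|Z\|\le (1-\delta)\lambda_\alpha^{\mathrm{light}}\}$ is, up to the relevant logarithmic scale, equivalent to the event that \emph{every} entry of $Z$ is bounded by roughly $(1-\delta)\lambda_\alpha^{\mathrm{light}}$, i.e. that no single edge-weight is atypically large. The heuristic is that for $\alpha>2$ the operator norm in the sparse regime is driven by the largest individual entry (equivalently, the heaviest single edge), since a Weibull weight of size $t$ forces $\|Z\|\ge t$, while the bulk of the spectrum sits at $O(1)$. Thus the dominant cost of pushing $\|Z\|$ below $(1-\delta)\lambda_\alpha^{\mathrm{light}}$ is the cost of having all $\sim dn$ present edges carry weight below that threshold.

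First I would prove the lower bound on the probability (upper bound on the rate). Condition on the graph $X$ having $\sim dn$ edges (this costs nothing on the scale $\log\log(\cdot)/\log n$), and estimate
\[
\mathbb{P}\bigl(\|Z\|\le (1-\delta)\lambda_\alpha^{\mathrm{light}}\bigr)\ \ge\ \mathbb{P}\Bigl(\max_{i,j}|Z_{ij}|\le (1-\delta')\lambda_\alpha^{\mathrm{light}}\Bigr)\cdot \mathbb{P}\bigl(\text{bulk is small}\mid \text{entries truncated}\bigr).
\]
The first factor is $\prod(1-\mathbb{P}(|Y_{ij}|\ge \cdot))^{\sim dn}\approx \exp(-dn\,C e^{-((1-\delta')\lambda_\alpha^{\mathrm{light}})^\alpha})$; plugging in the definition of $\lambda_\alpha^{\mathrm{light}}$ from \eqref{lambda light} and computing $((1-\delta')\lambda_\alpha^{\mathrm{light}})^\alpha$ one gets that the exponent is $n^{-(1-\delta')^2+o(1)}$, so $-\log(\text{first factor})\approx n^{1-(1-\delta')^2}$ and $\log\log(1/\cdot)\approx (1-(1-\delta')^2)\log n$. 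Letting $\delta'\downarrow\delta$ gives the desired lower bound. The second factor must be shown to be $\ge e^{-o(\text{anything relevant})}$, i.e. that once all present edge-weights are of size $O(\sqrt{\log n/\log\log n})$, the operator norm of the resulting sparse matrix is already below $(1-\delta)\lambda_\alpha^{\mathrm{light}}$ with probability bounded away from $0$ (or at least not too small); this is a comparatively crude norm bound for a sparse matrix with bounded entries, e.g. via a union bound over connected components combined with the fact that components in $\mathcal{G}_d(n,d/n)$ are trees or unicyclic of size $O(\log n/\log\log n)$, whose adjacency matrices have norm $O(\sqrt{\text{size}})$.

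Next I would prove the matching upper bound on the probability. The key inequality is $\|Z\|\ge \max_{i,j}|Z_{ij}|$, so
\[
\mathbb{P}\bigl(\|Z\|\le (1-\delta)\lambda_\alpha^{\mathrm{light}}\bigr)\ \le\ \mathbb{P}\Bigl(\max_{i,j}|Z_{ij}|\le (1-\delta)\lambda_\alpha^{\mathrm{light}}\Bigr),
\]
and the right side, again by independence over the $\mathrm{Bin}(n^2,d/n)$ present edges and concentration of the edge count, is $\le \exp(-c\,dn\,e^{-((1-\delta)\lambda_\alpha^{\mathrm{light}})^\alpha}(1+o(1)))$; the same computation of the exponent as above yields $\log\log(1/\mathbb{P})\le (1-(1-\delta)^2)\log n(1+o(1))$. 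Combining the two bounds gives the stated limit. \emph{The main obstacle} I anticipate is the lower-bound direction: one must genuinely control $\|Z\|$ from above on the truncated event, not just the largest entry, and show this costs nothing on the $\log\log/\log n$ scale — this requires knowing that the sparse random matrix with weights truncated at scale $\lambda_\alpha^{\mathrm{light}}\sim(\log n/\log\log n)^{1/2}$ has operator norm strictly below $(1-\delta)\lambda_\alpha^{\mathrm{light}}$ with non-negligible probability, which in turn rests on the component-size structure of $\mathcal{G}_d(n,d/n)$ (no component larger than $O(\log n/\log\log n)$) together with sharp enough deterministic bounds on norms of sparse weighted graphs, and care must be taken that the $\delta$-gap is large enough to absorb these $O(\sqrt{\log n/\log\log n}\cdot\text{lower order})$ corrections.
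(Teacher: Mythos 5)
Your central heuristic --- that for $\alpha>2$ the operator norm is driven by the largest individual entry, so the lower-tail cost is the cost of keeping all $\sim dn$ edge-weights below $(1-\delta)\lambda_\alpha^{\mathrm{light}}$ --- is incorrect, and the argument breaks in both directions because of it. Compute $\bigl((1-\delta)\lambda_\alpha^{\mathrm{light}}\bigr)^\alpha$: since $\lambda_\alpha^{\mathrm{light}}\sim B_\alpha(\log n)^{1/2}/(\log\log n)^{1/2-1/\alpha}$, one gets $\bigl(\lambda_\alpha^{\mathrm{light}}\bigr)^\alpha\sim (\log n)^{\alpha/2}/(\log\log n)^{\alpha/2-1}$, which for $\alpha>2$ is $\gg\log n$. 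Hence $\mathbb{P}\bigl(|Y_{ij}|\ge (1-\delta)\lambda_\alpha^{\mathrm{light}}\bigr)=e^{-(1+o(1))((1-\delta)\lambda_\alpha^{\mathrm{light}})^\alpha}\ll n^{-1}$, so $\mathbb{P}\bigl(\max_{i,j}|Z_{ij}|\le(1-\delta)\lambda_\alpha^{\mathrm{light}}\bigr)\to 1$. Your claimed computation that the first factor has $-\log$ of size $n^{1-(1-\delta')^2}$ is therefore wrong (that $-\log$ actually tends to $0$), and your claimed probability upper bound $\mathbb{P}(\|Z\|\le(1-\delta)\lambda_\alpha^{\mathrm{light}})\le\mathbb{P}(\max_{ij}|Z_{ij}|\le(1-\delta)\lambda_\alpha^{\mathrm{light}})$ is vacuous since the right side tends to $1$, yielding $\log\log(1/\cdot)\to-\infty$ rather than the desired lower bound. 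In short, you have applied the heavy-tailed ($0<\alpha\le 2$) mechanism, where the norm really is attained by a single heavy entry and where the paper does use $\mathbb{P}(\max_{(i,j)\in E(X)}|Y_{ij}|\le(1-\delta)\lambda_\alpha)$, to the light-tailed regime where it fails.

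The correct mechanism for $\alpha>2$ is that $\|Z\|\approx\lambda_\alpha^{\mathrm{light}}$ is achieved by a directed star (equivalently, a column of $Z$ with roughly $\gamma\tfrac{\log n}{\log\log n}$ nonzero entries), whose $\ell^2$ column norm is the sum of many moderately large Weibull weights rather than a single huge one. The paper's upper bound on $\mathbb{P}(\|Z\|\le(1-\delta)\lambda_\alpha^{\mathrm{light}})$ is obtained by conditioning on the event $\mathcal{A}_{\gamma_\delta',\rho}$ that there exist $m\approx n^{1-\gamma_\delta'-\rho}$ columns of $X$ each with at least $\lceil\gamma_\delta'\tfrac{\log n}{\log\log n}\rceil$ nonzero coordinates, and then using the tail estimate of Lemma~\ref{weibull lt estimate} for $\sum_i Z_{i\sigma(k)}^2$ to show that with probability $n^{-(1-\delta)^\alpha\cdot(\cdots)+o(1)}$ each such column alone pushes $\|Z\|$ above $(1-\delta)\lambda_\alpha^{\mathrm{light}}$; the bound $(1-n^{-(\cdots)})^m\le\exp(-n^{1-(1-\delta)^2-\rho+o(1)})$ then yields the claim. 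The lower bound on the probability requires the clique-reduction machinery (Proposition~\ref{reduction of cliques}), the star decomposition (Lemma~\ref{star decomposition}), and the directed-star lower tail (Lemma~\ref{modified lemma 5.10}), all of which your plan has no substitute for: the ``bulk is small'' term you dismiss as crude is where the entire difficulty of the theorem lives. You would need to rebuild the argument around the $\ell^2$ statistics of columns and stars, not around the maximum entry.
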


We obtain the following law of large numbers result as a corollary:

\begin{restatable}{corollary}{ltlln} \label{ltlln}
We have
\begin{align*}
\lim_{n\to\infty}\frac{(\log \log n)^{\frac{1}{2}-\frac{1}{\alpha}}}{(\log n)^{\frac{1}{2}}}\| Z\| =2^{\frac{1}{\alpha}}\alpha^{-\frac{1}{2}}(\alpha -2)^{\frac{1}{2}-\frac{1}{\alpha}}
\end{align*}
in probability.
\end{restatable}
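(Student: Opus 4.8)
The plan is to derive Corollary \ref{ltlln} directly from the two large deviation theorems via a Borel--Cantelli argument along a suitably chosen subsequence, a standard passage from a large deviation principle to a law of large numbers. Fix $\varepsilon >0$ small; it suffices to show that $\| Z\|/\lambda_{\alpha}^{\mathrm{light}} \to 1$ in probability, since $\lambda_{\alpha}^{\mathrm{light}} = 2^{1/\alpha}\alpha^{-1/2}(\alpha-2)^{1/2-1/\alpha}(\log n)^{1/2}(\log\log n)^{-(1/2-1/\alpha)}$ is exactly the normalizing factor in the statement. For the upper bound, apply Theorem \ref{ltut} with $\delta =\varepsilon$: it gives $-\log\mathbb{P}(\|Z\|\geq (1+\varepsilon)\lambda_\alpha^{\mathrm{light}})/\log n \to (1+\varepsilon)^2 -1 >0$, so $\mathbb{P}(\|Z\|\geq (1+\varepsilon)\lambda_\alpha^{\mathrm{light}}) \to 0$, in fact polynomially fast in $n$. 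For the lower bound, apply Theorem \ref{ltlt} with $\delta =\varepsilon$: it gives $\log\log \frac{1}{\mathbb{P}(\|Z\|\leq (1-\varepsilon)\lambda_\alpha^{\mathrm{light}})} \sim (1-(1-\varepsilon)^2)\log n \to \infty$, so $\mathbb{P}(\|Z\|\leq (1-\varepsilon)\lambda_\alpha^{\mathrm{light}}) \to 0$ as well (doubly-exponentially fast). Combining, $\mathbb{P}\big( (1-\varepsilon)\lambda_\alpha^{\mathrm{light}} < \|Z\| < (1+\varepsilon)\lambda_\alpha^{\mathrm{light}}\big) \to 1$ for every $\varepsilon>0$, which is precisely convergence in probability of $(\log\log n)^{1/2-1/\alpha}(\log n)^{-1/2}\|Z\|$ to $2^{1/\alpha}\alpha^{-1/2}(\alpha-2)^{1/2-1/\alpha}$.

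There is essentially no obstacle here: the corollary is a soft consequence of the two tail estimates, and the only mild care needed is to note that both probabilities in the theorems tend to $0$ (not just have a well-defined exponential rate) — this follows because the limiting rates $(1+\delta)^2-1$ and $1-(1-\delta)^2$ are strictly positive for $\delta\in(0,1)$, so for $n$ large the probabilities are bounded by, say, $n^{-c}$ and $e^{-e^{cn'}}$ respectively for some $c>0$. One should also remark that Theorem \ref{ltut} is stated for all $\delta>0$ while Theorem \ref{ltlt} requires $0<\delta<1$; since we only need arbitrarily small $\varepsilon$, taking $\varepsilon\in(0,1)$ in both suffices. If one wants almost sure convergence rather than convergence in probability, the polynomial decay on the upper side and the doubly-exponential decay on the lower side are both summable along $n\in\mathbb{N}$, so Borel--Cantelli upgrades the statement for free; but as the corollary only claims convergence in probability, the direct argument above is enough.

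\begin{proof}
Fix $\varepsilon\in(0,1)$. By Theorem \ref{ltut} applied with $\delta=\varepsilon$, we have
\begin{align*}
\lim_{n\to\infty}-\frac{\log\mathbb{P}\big(\|Z\|\geq(1+\varepsilon)\lambda_\alpha^{\mathrm{light}}\big)}{\log n}=(1+\varepsilon)^2-1>0,
\end{align*}
so that $\mathbb{P}\big(\|Z\|\geq(1+\varepsilon)\lambda_\alpha^{\mathrm{light}}\big)\to 0$ as $n\to\infty$. By Theorem \ref{ltlt} applied with $\delta=\varepsilon$, we have
\begin{align*}
\lim_{n\to\infty}\frac{1}{\log n}\log\log\frac{1}{\mathbb{P}\big(\|Z\|\leq(1-\varepsilon)\lambda_\alpha^{\mathrm{light}}\big)}=1-(1-\varepsilon)^2>0,
\end{align*}
so that $\log\log\frac{1}{\mathbb{P}(\|Z\|\leq(1-\varepsilon)\lambda_\alpha^{\mathrm{light}})}\to\infty$ and hence $\mathbb{P}\big(\|Z\|\leq(1-\varepsilon)\lambda_\alpha^{\mathrm{light}}\big)\to 0$. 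Combining the two, for every $\varepsilon\in(0,1)$,
\begin{align*}
\mathbb{P}\Big((1-\varepsilon)\lambda_\alpha^{\mathrm{light}}<\|Z\|<(1+\varepsilon)\lambda_\alpha^{\mathrm{light}}\Big)\xrightarrow[n\to\infty]{}1.
\end{align*}
Recalling from \eqref{lambda light} that
\begin{align*}
\frac{(\log\log n)^{\frac12-\frac1\alpha}}{(\log n)^{\frac12}}\lambda_\alpha^{\mathrm{light}}=2^{\frac1\alpha}\alpha^{-\frac12}(\alpha-2)^{\frac12-\frac1\alpha},
\end{align*}
this says exactly that $\frac{(\log\log n)^{1/2-1/\alpha}}{(\log n)^{1/2}}\|Z\|$ converges in probability to $2^{1/\alpha}\alpha^{-1/2}(\alpha-2)^{1/2-1/\alpha}$.
\end{proof}
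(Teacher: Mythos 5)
Your proof is correct and takes the natural route that the paper implicitly uses (the paper states Corollary~\ref{ltlln} as an immediate consequence of Theorems~\ref{ltut} and~\ref{ltlt} without writing out the argument): the positivity of the two rate functions gives vanishing probabilities on both the upper and lower tails, and since the prefactor $(\log\log n)^{1/2-1/\alpha}(\log n)^{-1/2}$ turns $\lambda_\alpha^{\mathrm{light}}$ into the claimed constant, this is exactly convergence in probability. One minor inaccuracy in the preamble (not in the proof itself): the remark that Borel--Cantelli ``upgrades to almost sure convergence for free'' is not correct on the upper side, since the decay rate there is $n^{-(2\varepsilon+\varepsilon^2)+o(1)}$, which is not summable for small $\varepsilon$, so a.s.\ convergence would require an additional argument (e.g., a subsequence/monotonicity trick); the corollary only claims convergence in probability, so this does not affect the proof.
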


\subsubsection{Heavy-tailed case, $0<\alpha \leq 2$}

Define
\begin{align}
\lambda_{\alpha}^{\mathrm{heavy}}:= (\log n)^{\frac{1}{\alpha}}
\end{align}
as a counterpart of \eqref{lambda light} for the heavy-tailed case. Now we state the upper and lower tail large deviations:

\begin{restatable}{theorem}{htut}
\label{htut}
For any $\delta >0$,
\begin{align*}
\lim_{n\to\infty}-\frac{\log \mathbb{P}\left( \| Z\| \geq (1+\delta )\lambda_{\alpha}^{\mathrm{heavy}}\right)}{\log n}=(1+\delta )^{\alpha}-1.
\end{align*}
\end{restatable}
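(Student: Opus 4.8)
The theorem concerns the upper tail probability $\mathbb{P}(\|Z\| \geq (1+\delta)\lambda_\alpha^{\mathrm{heavy}})$ where $\lambda_\alpha^{\mathrm{heavy}} = (\log n)^{1/\alpha}$ and $0 < \alpha \leq 2$. The key observation is that for heavy-tailed Weibull weights, the dominant contribution to $\|Z\|$ comes from a single large entry: if some $|Z_{ij}| \geq t$, then $\|Z\| \geq t$ trivially (a single large entry contributes its absolute value to the operator norm since $\|e_i e_j^T\| = 1$). So the event $\{\|Z\| \geq (1+\delta)(\log n)^{1/\alpha}\}$ should be roughly equivalent, at the scale of $\log n$, to the event that the maximum entry $\max_{ij}|Z_{ij}|$ exceeds $(1+\delta)(\log n)^{1/\alpha}$. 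Since there are $\sim dn$ nonzero entries (those with $X_{ij}=1$), and each satisfies $\mathbb{P}(|Y_{ij}| \geq (1+\delta)(\log n)^{1/\alpha}) \asymp \exp(-(1+\delta)^\alpha \log n) = n^{-(1+\delta)^\alpha}$, a union/second-moment argument over the $\sim dn$ entries gives $\mathbb{P}(\max|Z_{ij}| \geq (1+\delta)(\log n)^{1/\alpha}) \asymp n \cdot n^{-(1+\delta)^\alpha} = n^{1-(1+\delta)^\alpha}$, which produces the claimed rate $(1+\delta)^\alpha - 1$.

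The plan is therefore to prove matching upper and lower bounds on the log-probability. For the \textbf{lower bound} on the probability (hence upper bound on the rate): restrict to the event that at least one entry $|Z_{ij}|$ is large. By independence of the $dn$-ish nonzero entries and the Weibull lower tail bound $C_1 e^{-t^\alpha} \leq \mathbb{P}(|W| \geq t)$, a straightforward inclusion–exclusion or Bonferroni estimate shows $\mathbb{P}(\exists (i,j): |Z_{ij}| \geq (1+\delta)\lambda_\alpha^{\mathrm{heavy}}) \gtrsim n^{1-(1+\delta)^\alpha}$ (after conditioning on the number of edges concentrating near $dn$), and since this event is contained in $\{\|Z\| \geq (1+\delta)\lambda_\alpha^{\mathrm{heavy}}\}$ we are done. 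For the \textbf{upper bound} on the probability: here I need to show that if $\|Z\|$ is large, then \emph{some structure} of large entries must be present, and that the most economical such structure is a single large entry. The natural approach is a net/truncation argument: decompose $Z = Z^{\leq} + Z^{>}$ where $Z^{>}$ collects entries with $|Z_{ij}|$ above a threshold like $K\log\log n$ or $\epsilon(\log n)^{1/\alpha}$, and $Z^{\leq}$ the rest. Bound $\|Z^{\leq}\|$ by a crude deterministic/concentration argument (e.g., it is $o((\log n)^{1/\alpha})$ with overwhelming probability, using that truncated entries are bounded and the digraph has bounded degrees — this is where results analogous to the sparse Hermitian case and the structure of $\mathcal{G}_d(n,p)$ enter), and then control $\|Z^{>}\|$ in terms of the sizes and arrangement of the surviving large entries. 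One shows that to have $\|Z^{>}\| \geq (1+\delta - o(1))(\log n)^{1/\alpha}$, either one entry exceeds $(1+\delta-o(1))(\log n)^{1/\alpha}$, or a small configuration of moderately large entries conspires; a union bound over the (polynomially or sub-polynomially many) relevant configurations, weighted by their Weibull costs, shows the single-entry configuration dominates and yields the upper bound $\mathbb{P} \leq n^{1-(1+\delta) + o(1)}$... more precisely $n^{-((1+\delta)^\alpha - 1) + o(1)}$.

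The \textbf{main obstacle} is the upper bound, specifically showing that no cheaper configuration of several entries can push the operator norm above $(1+\delta)(\log n)^{1/\alpha}$ more economically than one big entry. In the undirected Hermitian case of \cite{ghn22} this is exactly where cliques with large edge-weights appear and force the complicated variational formula for $\alpha \le 2$; the abstract's promise is that the \emph{clique reduction technique} for directed networks reduces the effective maximum clique size to $2$, so that the only competing configurations are single edges (or at worst pairs), killing the variational complexity. Concretely, one must argue that in a digraph, a "clique-like" configuration contributing to the singular value can be replaced by an equivalent bipartite/undirected structure with maximum clique size $2$, so that Perron–Frobenius-type lower bounds on $\|Z^{>}\|$ from dense sub-configurations are no better (per unit Weibull cost) than from a single entry. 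I expect this reduction plus the counting of sub-configurations — showing the number of candidate supports of size $k$ is $n^{O(k)}$ while the probabilistic cost is $n^{-\Omega(k^{?})}$ in a way that is minimized at $k=1$ — to be the technical heart, and it will likely reuse the operator-norm-to-structure lemmas established earlier for the light-tailed analysis with the thresholds retuned to the heavy-tailed scale $(\log n)^{1/\alpha}$.
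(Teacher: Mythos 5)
Your lower bound matches the paper's argument essentially verbatim: condition on $|E(X)|\geq dn/2$ (which fails with probability $e^{-\Omega(n)}$), then use independence and the Weibull lower tail to show that with probability $\geq n^{1-(1+\delta)^{\alpha}+o(1)}$ some entry $|Z_{ij}|$ exceeds $(1+\delta)\lambda_{\alpha}^{\mathrm{heavy}}$, and observe $\|Z\|\geq \max_{ij}|Z_{ij}|$. That part is complete and correct.

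The upper bound, however, is only a sketch, and the part you leave vague is exactly the technical core. You say ``a union bound over the (polynomially or sub-polynomially many) relevant configurations, weighted by their Weibull costs, shows the single-entry configuration dominates,'' but this is not actually how the paper (or its predecessor \cite{ghn22}) establishes the bound, and I don't believe a bare union bound over supports of the large entries can be made to close at the correct constant. The paper's route is: (i) split $Z=Z^{(1)}+Z^{(2)}$ with threshold $(\varepsilon\log\log n)^{1/\alpha}$ and bound $\|Z^{(2)}\|\leq(\varepsilon\log\log n)^{1/\alpha}\|X\|$ via an LDP for $\|X\|$ (Lemma~\ref{graph ldp}); note this gives a \emph{polynomial}, not ``overwhelming,'' failure probability, which is still fine because $(1+\delta)^{2}-1\geq(1+\delta)^{\alpha}-1$ for $\alpha\leq2$; (ii) condition on the structural event $\mathcal{K}_{2}$ forcing each component of $X^{(1)}$ to have degree and size $O(\log n/\log\log n)$, at most $O(1)$ excess edges, and $O(1)$ self-loops (these events fail only with probability $n^{-\delta_0+o(1)}$); (iii) apply the clique reduction (Proposition~\ref{reduction of cliques}) to each component to get an undirected network with maximum clique size $2$; and (iv) invoke \cite[Proposition~6.2]{ghn22} with $k=2$, using $\phi_{\beta/2}(2)=2^{1-\beta}$, which gives the rate $(1+\delta)^{\alpha}$ per component, and then union bound over the $n$ components. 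Step (iv) is itself a nontrivial argument about spectral norms of sparse undirected networks and is imported as a black box; your plan to argue directly in the directed setting by ``counting sub-configurations'' would have to rederive an analogue of that result from scratch. You also slightly misstate the structural input: the relevant degrees are not bounded but grow like $\log n/\log\log n$, and it is the combination of degree, component-size, excess-edge, and self-loop control under the truncated edge probability $\lesssim d/(n(\log n)^{\varepsilon})$ that makes Proposition~\ref{heavy tail proposition} applicable.
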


\begin{restatable}{theorem}{htlt} \label{htlt}
For any $0<\delta <1$,
\begin{align*}
\lim_{n\to\infty}\frac{1}{\log n}\left( \log \log \frac{1}{\mathbb{P}\left( \| Z\| \leq (1-\delta )\lambda_{\alpha}^{\mathrm{heavy}}\right)}\right) =1-(1-\delta )^{\alpha}.
\end{align*}
\end{restatable}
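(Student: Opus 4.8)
The plan is to show that $\mathbb{P}(\|Z\|\le(1-\delta)\lambda)$, with $\lambda:=\lambda_{\alpha}^{\mathrm{heavy}}=(\log n)^{1/\alpha}$, sits at the doubly-exponential scale $\exp(-n^{1-(1-\delta)^\alpha+o(1)})$. Concretely it suffices to prove $\mathbb{P}(\|Z\|\le(1-\delta)\lambda)\le\exp(-c\,n^{1-(1-\delta)^\alpha})$ for some $c>0$, and, for every fixed $\epsilon\in(0,1-\delta)$, $\mathbb{P}(\|Z\|\le(1-\delta)\lambda)\ge\exp(-n^{1-(1-\delta-\epsilon)^\alpha+o(1)})$: the first bound gives $\liminf_n\frac{1}{\log n}\log\log\frac{1}{\mathbb{P}(\|Z\|\le(1-\delta)\lambda)}\ge1-(1-\delta)^\alpha$, the second gives $\limsup_n\frac{1}{\log n}\log\log\frac{1}{\mathbb{P}(\|Z\|\le(1-\delta)\lambda)}\le1-(1-\delta-\epsilon)^\alpha$, and $\epsilon\downarrow0$ closes the gap. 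The upper bound on the probability is immediate: since $\|Z\|\ge|Z_{ij}|$ for every $i,j$, the event forces all $n^2$ independent entries to satisfy $|Z_{ij}|\le(1-\delta)\lambda$, and for large $n$ (so that $(1-\delta)\lambda\ge1$) the Weibull lower bound gives $\mathbb{P}(|Z_{ij}|>(1-\delta)\lambda)=p\,\mathbb{P}(|Y|>(1-\delta)\lambda)\ge C_1 d\,n^{-1-(1-\delta)^\alpha}$, whence $\mathbb{P}(\|Z\|\le(1-\delta)\lambda)\le(1-C_1 d\,n^{-1-(1-\delta)^\alpha})^{n^2}\le\exp(-C_1 d\,n^{1-(1-\delta)^\alpha})$.

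For the lower bound on the probability I would exhibit a good event $G\subseteq\{\|Z\|\le(1-\delta)\lambda\}$ with $\mathbb{P}(G)\ge\exp(-n^{1-(1-\delta-\epsilon)^\alpha+o(1)})$. First condition on the digraph $\Gamma=\mathcal{G}_d(n,p)$ lying in a typical class $\mathcal T$ — in-/out-degrees $O(\log n/\log\log n)$; the $0$--$1$ adjacency matrix $\Gamma_{\mathrm{adj}}$ has operator norm $\le\epsilon\lambda/\kappa$, a typical event since that norm is of order $\sqrt{\log n/\log\log n}\ll\lambda$; few short cycles — which costs only the factor $\mathbb{P}(\Gamma\in\mathcal T)\ge\tfrac12$. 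Fix a large constant $\kappa$ and split $Z=A+B$, where $A$ carries the entries of modulus $\le\kappa$ and $B$ the rest. On $\{\Gamma\in\mathcal T\}$ the entrywise domination $|A|\le\kappa\,\Gamma_{\mathrm{adj}}$ forces $\|A\|\le\kappa\,\|\Gamma_{\mathrm{adj}}\|\le\epsilon\lambda$ deterministically. The bipartite row--column support of $B$ is a bond percolation of $\Gamma$ with retention probability $\rho=\mathbb{P}(|Y|>\kappa)$, hence subcritical once $\kappa$ is large, and $\|B\|=\max_C\|B_C\|\le\max_C\|B_C\|_F=\max_C\big(\sum_{e\in C}|Z_e|^2\big)^{1/2}$ over the connected components $C$ of that support; so I take $G=\{\Gamma\in\mathcal T\}\cap G_B$ with $G_B=\big\{\sum_{e\in C}|Z_e|^2\le(1-\delta-\epsilon)^2\lambda^2\ \text{for every }C\big\}$, and then on $G$ one has $\|Z\|\le\|A\|+\|B\|\le\epsilon\lambda+(1-\delta-\epsilon)\lambda=(1-\delta)\lambda$.

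It then remains to bound $\mathbb{P}(G_B\mid\Gamma)$ from below for $\Gamma\in\mathcal T$. Revealing which edges of $\Gamma$ are heavy, the components of the heavy support are conditionally independent; with conditional probability $\ge\tfrac12$ they all have $O(\log n)$ vertices, and on that event $\mathbb{P}(G_B\mid\Gamma)\gtrsim\exp\big(-2\sum_C\mathbb{P}(C\text{ bad})\big)$, where $\{C\text{ bad}\}=\big\{\sum_{e\in C}|Z_e|^2>(1-\delta-\epsilon)^2\lambda^2\big\}$. The crucial input is the \emph{principle of a single big jump}: the squared weight $|Y|^2$ has tail $\asymp e^{-s^{\alpha/2}}$ with $\alpha/2\le1$, hence is at least sub-exponential, so over a component with $|C|=O(\log n)$ edges $\mathbb{P}(C\text{ bad})\le(1+o(1))\,|C|\,\mathbb{P}(|Y|^2>(1-\delta-\epsilon)^2\lambda^2)=n^{-(1-\delta-\epsilon)^\alpha+o(1)}$ — exactly as for an isolated edge, a large sum of squared weights over a component is, to leading exponential order, due to one heavy edge (for $\alpha<2$ the spread-out configurations of $k\ge2$ comparable heavy edges cost $n^{-k^{1-\alpha/2}(1-\delta-\epsilon)^\alpha+o(1)}$, strictly smaller). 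Summing over the $O(n)$ components, $\sum_C\mathbb{P}(C\text{ bad})\le n^{1-(1-\delta-\epsilon)^\alpha+o(1)}$, so $\mathbb{P}(G_B\mid\Gamma)\ge\exp(-n^{1-(1-\delta-\epsilon)^\alpha+o(1)})$ and $\mathbb{P}(\|Z\|\le(1-\delta)\lambda)\ge\tfrac12\inf_{\Gamma\in\mathcal T}\mathbb{P}(G_B\mid\Gamma)$ is as required.

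The step I expect to be the genuine obstacle is the decoupling just used: the event $\{\|A\|\le\epsilon\lambda\}$ fails only with probability $n^{-\omega(1)}$ — a polylogarithm of $n$ in the exponent — which is enormously larger than the target $\exp(-n^{\mathrm{const}})$, so a naive union bound is hopeless, and one must instead condition on $\Gamma$ first and exploit that, given $\Gamma$ together with the heavy/light split, the light weights (controlling $A$) and the heavy weights (controlling $G_B$) are independent. Two further remarks. The crude Frobenius bound $\|B_C\|\le\|B_C\|_F$ is tight enough here precisely because for $\alpha<2$ heavy components stay small and squared Weibulls are genuinely heavy-tailed; at the boundary $\alpha=2$ the squared weights are merely exponential and large heavy components do appear, for which one must replace the Frobenius bound by the sharper operator-norm estimate $\|B_C\|\lesssim\kappa\sqrt{\Delta_C}\ll\lambda$, reducing the remaining work to the Gaussian analysis of \cite{gn22}, while $\alpha>2$ is the light-tailed regime of Theorems~\ref{ltut}--\ref{ltlt}. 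Finally, this is where the clique-reduction philosophy enters: for the largest singular value of a digraph a single large entry is the only relevant obstruction — every configuration spanning two or more heavy edges is strictly cheaper to forbid — so, unlike the Hermitian case of \cite{ghn22}, no clique correction survives and the rate function is the simple expression in the statement.
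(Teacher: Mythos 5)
Your upper bound on the lower-tail probability is essentially the paper's argument and in fact slightly cleaner: the paper conditions on $\mathcal{M}=\{|E(X)|\geq dn/2\}$ and applies the Weibull lower tail to the realized edges, while you simply observe $\|Z\|\geq\max_{i,j}|Z_{ij}|$ and use all $n^2$ independent entries $Z_{ij}=X_{ij}Y_{ij}$ at once, absorbing the $d/n$ factor into the single entrywise tail; both routes give $\exp\bigl(-n^{1-(1-\delta)^\alpha+o(1)}\bigr)$. For the lower bound, the block decomposition of $B$ along its bipartite row--column support is precisely the paper's clique reduction (Proposition~\ref{reduction of cliques}) applied to the heavy part, so your argument is the paper's in disguise; the difference is that you then control each block by a Frobenius bound and a single-big-jump estimate instead of invoking the Hermitian machinery of \cite[Proposition~6.2]{ghn22} through Proposition~\ref{heavy tail proposition}.

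The genuine gap is at $\alpha=2$, which the theorem covers. With a constant truncation level $\kappa$ the heavy bipartite support is merely subcritical and its components have $\Theta(\log n)$ edges. For $\alpha=2$ the squared weights are exponential, so over a component $C$ with $|C|\approx a\log n$ edges the Frobenius event has a Gamma-type tail,
\begin{align*}
\mathbb{P}\Bigl(\sum_{e\in C}|Z_e|^2>(1-\delta-\epsilon)^2\log n\Bigr)\asymp\frac{T^{|C|-1}}{(|C|-1)!}e^{-T}=n^{\,a\log\bigl(e(1-\delta-\epsilon)^2/a\bigr)-(1-\delta-\epsilon)^2+o(1)},
\end{align*}
which is $n^{o(1)}$ when $a$ is near $(1-\delta-\epsilon)^2$; the single-big-jump bound $|C|\cdot n^{-(1-\delta-\epsilon)^\alpha}$ is then simply false, and so is your claimed lower bound on $\mathbb{P}(G_B)$. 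Your suggested patch $\|B_C\|\lesssim\kappa\sqrt{\Delta_C}$ cannot be applied because the entries of $B$ all exceed $\kappa$ in modulus by construction, and ``reducing to the Gaussian analysis of \cite{gn22}'' is not a proof. The paper avoids the issue by truncating at $(\varepsilon\log\log n)^{1/\alpha}$ rather than at a constant, which puts $\tilde{X}^{(1)}$ in the highly subcritical regime where components have only $O(\log n/\log\log n)$ vertices (Lemma~\ref{c}), and then applies \cite[Proposition~6.2]{ghn22}, which (per the remark preceding Proposition~\ref{heavy tail proposition}) is valid up to and including $\alpha=2$. In fact your own Frobenius/single-big-jump argument closes at $\alpha=2$ if you adopt the $(\varepsilon\log\log n)^{1/\alpha}$ threshold: with $|C|=O(\log n/\log\log n)$ the Gamma correction factor $T^{|C|-1}/(|C|-1)!$ contributes only $n^{o(1)}$, and $\mathbb{P}(C\text{ bad})=n^{-(1-\delta-\epsilon)^\alpha+o(1)}$ is restored.
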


We deduce the following law of large numbers as a corollary:

\begin{restatable}{corollary}{htlln} \label{htlln}
We have
\begin{align*}
\lim_{n\to\infty}\frac{\| Z\|}{(\log n)^{\frac{1}{\alpha}}}=1
\end{align*}
in probability.
\end{restatable}

\begin{remark} \upshape{
The law of large numbers (LLN) result for the largest eigenvalue of sparse Wigner matrices in \cite{ghn22} are identical to our Corollaries \ref{ltlln} and \ref{htlln}. For the light-tailed case, the large deviation principle (LDP) is also identical to Theorems \ref{ltut} and \ref{ltlt}. In the heavy-tailed case, the lower tail large deviation agrees with Theorem \ref{htlt}, but the upper tail agrees with Theorem \ref{htut} only for $0<\alpha \leq 1$. For $1<\alpha \leq 2$, the rate function for the largest eigenvalue $\lambda_1(Z)$ has a variational formulation
\begin{align} \label{variational}
\lim_{n\to\infty}-\frac{\log \mathbb{P}\left( \lambda_1 (Z)\geq (1+\delta )\lambda_{\alpha}^{\mathrm{heavy}}\right)}{\log n}=\min_{k=2,3,\cdots }\psi_{\alpha ,\delta }(k)
\end{align}
where
\begin{align} \label{chicken}
\psi_{\alpha ,\delta }(k) := \frac{k(k-3)}{2}+\frac{1}{2}(1+\delta )^{\alpha}\phi_{\beta /2}(k)^{1-\alpha}
\end{align}
in which $\beta$ is the H\"older-conjugate of $\alpha$ and 
\begin{align}
\phi_{\theta}(k) := \sup_{v\in \mathbb{R}^k, v=(v_1, \cdots ,v_k), \| v\|_1 =1}\sum_{i,j\in [k], i\neq j}|v_i|^{\theta}|v_j|^{\theta}.
\end{align}
Our rate function $(1+\delta )^{\alpha}-1$ corresponds to $\psi_{\alpha ,\delta}(2)$ which is explicitly calculated using $\phi_{\beta /2}(2)=2^{1-\beta}$.
}
\end{remark}

\begin{remark} \upshape{
The spectral radius $\rho (Z)$ of a matrix $Z$ is defined as
\begin{align*}
\rho (Z) =\max_{1\leq i\leq n}|\lambda_i (Z)|,
\end{align*}
where $\lambda_1 (Z), \ldots ,\lambda_n(Z)$ are the eigenvalues of $Z$ and satisfies 
$\rho (Z) \leq \sigma_1 (Z) = \| Z\| $.
The limiting behavior of $\rho$ for sparse non-Hermitian random matrices in the regime $p=\frac{d}{n}$ is unknown for general entry distributions. The only result available in the literature is the following upper tail bound for the spectral radius of $\mathcal{G}_d(n,\frac{d}{n})$ with \emph{bounded} edge-weights:
\begin{theorem}[{\cite[Theorem 2.11]{bbk20}}] \label{aihp20 theorem}
Let $H=(H_{ij})_{1\leq i,j\leq n}$ be an i.i.d. matrix with mean-zero random variables satisfying $\mathbb{E}|H_{ij}|^2 \leq \frac{1}{n}$ and $\max_{ij}|H_{ij}|\leq \frac{1}{q}$ a.s. Then for $0<q\leq n^{1/10}$ and $\varepsilon \geq 0$, there exist some universal positive constants $C,c$ so that
\begin{align*}
\mathbb{P}\left( \rho (H)\geq 1+\varepsilon \right) \leq Cn^{2-cq\log (1+\varepsilon )}.
\end{align*}
\end{theorem}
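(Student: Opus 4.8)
The plan is to control $\rho(H)$ through operator norms of powers of $H$ and then run the trace (moment) method at a degree tuned to $q$. The deterministic input is that, for every positive integer $\ell$,
\[
\rho(H)^{2\ell}=\rho(H^\ell)^2\le \|H^\ell\|^2=\big\|H^\ell(H^\ell)^*\big\|\le \operatorname{tr}\!\big(H^\ell(H^*)^\ell\big),
\]
using $\rho(A)\le\|A\|$ and that $H^\ell(H^\ell)^*$ is positive semidefinite (so its top eigenvalue is at most its trace). Markov's inequality then gives
\[
\mathbb{P}\big(\rho(H)\ge 1+\varepsilon\big)\le (1+\varepsilon)^{-2\ell}\,\mathbb{E}\!\left[\operatorname{tr}\!\big(H^\ell(H^*)^\ell\big)\right].
\]
I would take $\ell=\lceil c_0 q\log n\rceil$ for a small universal constant $c_0$, so that $(1+\varepsilon)^{-2\ell}\le n^{-2c_0 q\log(1+\varepsilon)}$; this already supplies the advertised factor $n^{-cq\log(1+\varepsilon)}$ with $c=2c_0$. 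The task thus reduces to showing $\mathbb{E}[\operatorname{tr}(H^\ell(H^*)^\ell)]\le Cn^{2}$ for all $\ell$ in this range. (The degenerate cases where the claimed bound is vacuous — $\varepsilon$ near $0$, $q$ extremely small, or $q\log(1+\varepsilon)$ bounded — are disposed of directly, e.g.\ via $\operatorname{tr}(HH^*)=\sum_{ij}|H_{ij}|^2\le n$.)

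To estimate the moment, I would expand
\[
\operatorname{tr}\!\big(H^\ell(H^*)^\ell\big)=\sum_{i_0,\dots,i_{2\ell-1}} H_{i_0i_1}\cdots H_{i_{\ell-1}i_\ell}\,\overline{H_{i_{\ell+1}i_\ell}}\cdots\overline{H_{i_0 i_{2\ell-1}}},
\]
so that each term corresponds to a pair of length-$\ell$ walks on $[n]$ sharing \emph{both} endpoints $i_0$ and $i_\ell$, the first weighted by entries of $H$ and the second by entries of $\overline H$. Since the entries are independent with $\mathbb{E}H_{ij}=0$, a term has nonzero expectation only if every ordered pair $(a,b)$ that occurs is used — counting occurrences over the two walks together — at least twice. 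For such a term, if the multigraph of used pairs has $E$ distinct edges, $V\le E+1$ vertices, and edge multiplicities $m_e\ge 2$ with $\sum_e m_e=2\ell$, then independence together with $\mathbb{E}|H_{ij}|^2\le n^{-1}$ and $|H_{ij}|\le q^{-1}$ a.s.\ gives
\[
\big|\mathbb{E}[\,\cdots\,]\big|\le \prod_e \mathbb{E}|H_e|^{m_e}\le \prod_e q^{-(m_e-2)}n^{-1}=q^{-(2\ell-2E)}\,n^{-E}.
\]

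It remains to count the walk-pairs of each topological type. Parametrize by the edge deficit $t:=\ell-E\ge 0$ (equivalently $\sum_e(m_e-2)=2t$) and the cycle rank $s:=E+1-V\ge 0$, so that the number $n^{V}$ of vertex labelings times $n^{-E}$ contributes a net $n^{1-s}$. The number of walk-pairs of a given type is $n^V$ times a shape count, and the crucial point is that this shape count, \emph{combined with the gain $q^{-2t}$ from the almost-sure bound}, sums over all types to at most a small power of $n$: concretely one shows the shape count is at most $C^{t+s}\ell^{\,t+s}/(t+s)!$ (each of the $t+s$ ``defects'' can be located among $O(\ell)$ positions and resolved in $O(1)$ ways), whence
\[
\mathbb{E}\!\left[\operatorname{tr}\!\big(H^\ell(H^*)^\ell\big)\right]\le n\sum_{s,t\ge 0}\frac{(C\ell/n)^{s}}{s!}\,\frac{(C\ell/q^{2})^{t}}{t!}= n\,\exp\!\big(C\ell/n+C\ell/q^{2}\big).
\]
For $\ell=\lceil c_0 q\log n\rceil$ the exponent is $o(1)+Cc_0\log n/q$, so this is at most $n^{1+Cc_0+o(1)}\le Cn^{2}$ once $c_0$ is small, and the hypothesis $q\le n^{1/10}$ keeps $\ell\ll n^{1/2}$, the range in which the shape bounds above are valid. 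Substituting into the Markov estimate yields $\mathbb{P}(\rho(H)\ge 1+\varepsilon)\le Cn^{1+Cc_0}\,n^{-2c_0 q\log(1+\varepsilon)}\le Cn^{2-cq\log(1+\varepsilon)}$ with $c=2c_0$.

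The heart of the argument — and the step I expect to be the main obstacle — is the sharp combinatorial bookkeeping just sketched: one must show that $\mathbb{E}[\operatorname{tr}(H^\ell(H^*)^\ell)]$ grows in $\ell$ only at rate $(1+O(1/q))^{\ell}$, rather than at a genuine constant rate $>1$. This is essentially different from the Hermitian Wigner count, where $\mathbb{E}\operatorname{tr}(H^{2\ell})\asymp n\,4^{\ell}$ (Catalan numbers) and hence only controls $\|H\|\to 2$, not $\rho\to 1$. The suppression of the Catalan factor here is exactly due to the non-Hermitian structure: in $\operatorname{tr}(H^\ell(H^*)^\ell)$ the two walks are pinned at both endpoints $i_0$ and $i_\ell$, so once the first walk is forced to be essentially self-avoiding the second must retrace it, leaving no room for the extra backtracking patterns that generate $4^\ell$. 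Making this heuristic rigorous — carefully weighing the entropy of ``defected'' walk shapes against the $q^{-2}$ gain per excess edge-use and the $n^{-1}$ gain per excess vertex, and separately checking that the diagonal entries $H_{ii}$ contribute negligibly — is the technical core; the remaining steps (the norm reduction, Markov, and the choice of $\ell$) are routine.
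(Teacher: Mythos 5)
This theorem is quoted from \cite{bbk20} (their Theorem~2.11); the paper under review does not supply a proof, so there is no in-paper argument to compare against. Judged on its own terms, your reduction --- $\rho(H)^{2\ell}\le\|H^\ell\|^2\le\operatorname{tr}\!\big(H^\ell(H^*)^\ell\big)$, Markov's inequality, and the choice $\ell\sim q\log n$ --- is correct and is, as far as I can tell, the same high-level route Benaych-Georges--Bordenave--Knowles take, and your heuristic for why pinning both endpoints suppresses the Catalan factor is the right structural reason the method works here and not for Hermitian Wigner matrices.

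The gap is the moment bound itself. The estimate $\mathbb{E}[\operatorname{tr}(H^\ell(H^*)^\ell)]\le n\exp(C\ell/n+C\ell/q^2)$, and in particular the claimed shape count $C^{s+t}\ell^{s+t}/(s+t)!$, is asserted rather than proven, and you acknowledge it as the ``technical core.'' But this is not an omitted detail --- it is essentially the whole theorem. Concretely: (i) that the unique $(s,t)=(0,0)$ shape is the doubled simple path needs a proof (two length-$\ell$ walks from $a$ to $b$ on a tree of $\ell$ directed edges, each used exactly twice, must both trace the $a$--$b$ geodesic; this uses both the directed structure and the shared endpoints). (ii) The power of $\ell$ in the shape count is delicate with your choice $\ell\asymp q\log n$: for the $t$-sum to converge you need the count of shapes with $2t$ excess edge-uses to be of order $(C\ell)^t$ and not $(C\ell)^{2t}$; the latter would give $\sum_t(C\ell/q)^{2t}$ with $\ell/q\asymp\log n$, which diverges. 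One cannot take this for granted, since naively each of the $2t$ repeat steps could choose among $O(\ell)$ already-seen edges. (iii) The inequality $V\le E+1$ holds for the underlying \emph{undirected} simple graph, while $E$ in your expectation bound counts distinct \emph{directed} pairs $(c,d)$; when both $(c,d)$ and $(d,c)$ occur they collapse to one undirected edge, and this bookkeeping (together with the diagonal entries $H_{ii}$) has to be done consistently. These verifications are precisely where \cite{bbk20} spend their effort, and without them the proposal is a correct blueprint rather than a proof.
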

Using the tail estimate
\begin{align*}
\mathbb{P}\left(|W|>r(\log n)^{\frac{1}{\alpha}}\right) \asymp e^{-r^{\alpha}\log n} \ll n^{-1}
\end{align*}
for some $r>1$ where $W$ is a Weibull distribution with parameter $\alpha$, and the fact that our matrix $Z$ has typically $O(n)$ nonzero entries, we may truncate the entries to have absolute value smaller than $r(\log n)^{\frac{1}{\alpha}}$. Applying Theorem \ref{aihp20 theorem} to the normalized matrix $\frac{1}{r(\log n)^{\frac{1}{\alpha}}}Z$ yields
\begin{align} \label{spectral radii}
\mathbb{P}\left(\rho (Z)\geq (1+\varepsilon )r(\log n)^{\frac{1}{\alpha}}\right) \leq Cn^{2-cr\log (1+\varepsilon )}
\end{align}
for $\varepsilon \geq 0$ and some universal constants $C,c>0$. This shows that $\rho (Z) \lesssim (\log n)^{\frac{1}{\alpha}}$ with high probability for any $\alpha >0$. 

Since $\rho (Z)\leq \| Z\|$, Corollaries \ref{ltlln} and \ref{htlln} provide an upper tail bound for $\rho$. For heavy-tailed weights Theorem \ref{htlln} implies $\rho (Z)\leq (\log n)^{\frac{1}{\alpha}}$ where the order agrees with \eqref{spectral radii} and an explicit constant is provided. However, in the light-tailed case Corollary \ref{ltlln} shows that $\rho (Z) \ll \| Z\|$. While \cite[Corollary 3.5]{bbk20} provides an estimate for all eigenvalues of Erd\H{o}s-R\'enyi digraphs, even the order of the spectral radius is unknown in the presence of weights. }
\end{remark}

\subsection{Extension to rectangular matrices} \label{rectangular section}

The results of the previous section easily extends to rectangular matrices with i.i.d. entries. First we prove the following lemma regarding the spectral norm of minors of a matrix:

\begin{lemma} \label{submatrix}
Let $A=(a_{ij})_{1\leq i\leq m, 1\leq j\leq n}$ be an $m\times n$ matrix with real entries and $B$ be its top left $p\times q$ minor, that is, $B=(a_{ij})_{1\leq i\leq p, 1\leq j\leq q}$ where $1\leq p\leq m$ and $1\leq q\leq n$. Then $\| A\| \geq \| B\|$.
\end{lemma}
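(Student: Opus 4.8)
The plan is to exhibit $B$ as the compression of $A$ to a pair of coordinate subspaces and then use the variational characterization of the operator norm. Concretely, let $P\colon \mathbb{R}^m \to \mathbb{R}^m$ be the orthogonal projection onto the span of the first $p$ standard basis vectors, and $Q\colon \mathbb{R}^n \to \mathbb{R}^n$ the orthogonal projection onto the span of the first $q$ standard basis vectors. Identifying $\mathbb{R}^q$ with the range of $Q$ and $\mathbb{R}^p$ with the range of $P$, one checks directly from the entrywise description that $B$ acts as $PAQ$ (viewed as a map from $\mathbb{R}^q$ to $\mathbb{R}^p$), i.e. for $w \in \mathbb{R}^q$ we have $Bw = P A (Qw)$ after the obvious inclusions.

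From here the estimate is immediate: for any unit vector $w \in \mathbb{R}^q$, extend it by zeros to $\widetilde{w} \in \mathbb{R}^n$, so $\|\widetilde{w}\|_2 = 1$ and $Q\widetilde{w} = \widetilde{w}$. Then
\begin{align*}
\|Bw\|_2 = \|P A \widetilde{w}\|_2 \leq \|A\widetilde{w}\|_2 \leq \|A\|,
\end{align*}
where the first inequality uses that an orthogonal projection is norm-nonincreasing and the second is the definition of $\|A\|$. Taking the supremum over all unit $w \in \mathbb{R}^q$ gives $\|B\| \leq \|A\|$, which is the claim.

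There is essentially no obstacle here; the only thing to be careful about is the bookkeeping between the ambient spaces $\mathbb{R}^m,\mathbb{R}^n$ and the smaller spaces $\mathbb{R}^p,\mathbb{R}^q$, so that the statement ``$B$ equals $PAQ$ restricted appropriately'' is justified rather than asserted. One can avoid even mentioning projections by arguing purely with coordinates: given a maximizing unit vector $w$ for $\|B\|$ (which exists by compactness of the unit sphere in $\mathbb{R}^q$), pad it with zeros to $\widetilde w \in \mathbb{R}^n$ and note that the first $p$ coordinates of $A\widetilde w$ are exactly the coordinates of $Bw$, whence $\|Bw\|_2^2 \leq \sum_{i=1}^m (A\widetilde w)_i^2 = \|A\widetilde w\|_2^2 \leq \|A\|^2$. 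This coordinate version is probably the cleanest to write and uses nothing beyond the definitions.
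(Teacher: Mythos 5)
Your proof is correct and is essentially the same argument as the paper's: both pad a maximizing unit vector for $B$ with zeros to a unit vector in $\mathbb{R}^n$, observe that the first $p$ coordinates of the image under $A$ coincide with those of $Bw$, and drop the remaining nonnegative squared terms. The projection formulation $B = PAQ$ you give at the start is just a repackaging of that same coordinate argument.
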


\begin{proof}
Let $x=(x_1,\ldots ,x_q)\in\mathbb{R}^q$ with $\| x\|_2=1$ satisfy $\| Bx\|_2 =\| B\|$. Then
\begin{align*}
\| A\|^2 & =\sup_{y\in \mathbb{R}^n}\| Ay\|_2^2 =\sup_{y\in \mathbb{R}^n}\sum_{i=1}^{m}\left( \sum_{j=1}^{n}a_{ij} y_j\right)^2 \geq \sup_{y\in \mathbb{R}^q\times \lbrace 0\rbrace^{n-q}}\sum_{i=1}^{m}\left( \sum_{j=1}^{q}a_{ij}y_j\right)^2 \\
& \geq \sup_{y\in\mathbb{R}^q}\sum_{i=1}^{p}\left( \sum_{j=1}^{q}a_{ij}y_j\right)^2 \geq \sum_{i=1}^{p}\left( \sum_{j=1}^{q}a_{ij}x_j\right)^2 =\| Bx\|_2^2 =\| B\|^2
\end{align*}
where we chose $y=(x_1,\ldots ,x_q,0,\ldots ,0)$ in the first inequality.
\end{proof}

Using this, we may prove the following extension to rectangular matrices.

\begin{theorem}
Let $A_{m\times n}$ be an $m\times n$ matrix with i.i.d. weights having Weibull distributions with shape parameter $\alpha >0$ and $\frac{m}{n}\to \gamma \in (0,\infty )$ as $m,n\to \infty$. Then the results of Section \ref{main results} hold for $A_{m\times n}$. 
\end{theorem}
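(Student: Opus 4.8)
The plan is to reduce the rectangular case to the square case already treated in Section \ref{main results} by sandwiching $\|A_{m\times n}\|$ between the spectral norms of two square minors, using Lemma \ref{submatrix}. Without loss of generality assume $m\leq n$ (otherwise pass to the transpose, which has the same singular values). Since $m/n\to\gamma\in(0,\infty)$, there is a constant $c\in(0,1]$ with $cn\leq m\leq n$ for all large $n$. Let $B$ be the top-left $m\times m$ minor of $A_{m\times n}$ and let $A'$ be the top-left $m\times m$ minor obtained instead by embedding $A_{m\times n}$ into an $n\times n$ matrix $\widetilde A$ whose first $m$ rows are those of $A_{m\times n}$ and whose remaining $n-m$ rows are filled with fresh i.i.d. Weibull-weighted Bernoulli entries. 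By Lemma \ref{submatrix} applied twice,
\begin{align*}
\| B\| \leq \| A_{m\times n}\| \leq \| \widetilde A\|,
\end{align*}
where $B$ is an $m\times m$ square i.i.d. matrix of the type studied in Theorems \ref{ltut}--\ref{htlt} (with $m$ in place of $n$), and $\widetilde A$ is an $n\times n$ square matrix of the same type.

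Next I would transfer the large deviation estimates through this sandwich. The key point is that the relevant scales $\lambda_\alpha^{\mathrm{light}}$ and $\lambda_\alpha^{\mathrm{heavy}}$ and the rate functions depend on the dimension only through $\log n$ and $\log\log n$, and $\log m = \log n + O(1)$, $\log\log m = \log\log n + o(1)$ since $\log m/\log n\to 1$. Hence $\lambda_\alpha^{\bullet}(m)/\lambda_\alpha^{\bullet}(n)\to 1$ in both regimes, so for any fixed $\delta>0$ and all large $n$ one has $(1+\delta/2)\lambda_\alpha^{\bullet}(m)\geq (1+\delta/3)\lambda_\alpha^{\bullet}(n)$ and similarly for the lower-tail comparisons. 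Applying the upper-tail theorem for $\widetilde A$ (dimension $n$) gives the upper bound on $\mathbb P(\|A_{m\times n}\|\geq (1+\delta)\lambda_\alpha^{\bullet})$ with the correct exponent, and applying the lower-tail theorem for $B$ (dimension $m$) gives the matching lower bound $\mathbb P(\|A_{m\times n}\|\leq(1-\delta)\lambda_\alpha^{\bullet})$. For the upper-tail \emph{lower} bound (i.e. that $\|A_{m\times n}\|$ is at least this large with the stated probability) one uses that $B$ already contains, with the claimed probability, a single large entry or a structure forcing $\|B\|\geq(1+\delta)\lambda_\alpha^{\bullet}$; since $\|A_{m\times n}\|\geq\|B\|$ this is inherited. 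Symmetrically the lower-tail upper bound comes from $\widetilde A$. Because the exponents in Theorems \ref{ltut}--\ref{htlt} are continuous in $\delta$, the $\delta/3$ versus $\delta$ discrepancy washes out in the limit, and the same argument upgrades Corollaries \ref{ltlln} and \ref{htlln} to the rectangular setting by the standard squeeze.

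The only mild subtlety — and the step I would be most careful about — is that the square matrix $\widetilde A$ is an \emph{enlargement} of $A_{m\times n}$ rather than a minor of it, so one must verify that appending $n-m = O(n)$ extra i.i.d. rows does not change the order of magnitude of the relevant probabilities. This is transparent for the upper tail: $\mathbb P(\|A_{m\times n}\|\geq t)\leq \mathbb P(\|\widetilde A\|\geq t)$ directly, with no loss. For the matching upper-tail lower bound, working with the genuine minor $B$ (no enlargement) suffices, so no issue arises there. One should also note the trivial reduction $\|A_{m\times n}\| = \|A_{m\times n}^{\mathsf T}\|$ to handle $m>n$, and that $\gamma\in(0,\infty)$ guarantees $c n\le m\le n$ (after transposing if needed), which is all that is used. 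No new estimates beyond Lemma \ref{submatrix} and Theorems \ref{ltut}--\ref{htlln} are needed; the proof is essentially a dimension-comparison argument, so I would keep it short.
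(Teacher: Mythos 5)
Your proposal is correct and follows essentially the same route as the paper: sandwich $\|A_{m\times n}\|$ between $\|M_{m\wedge n}\|$ (the square minor) and $\|M_{m\vee n}\|$ (a square i.i.d.\ enlargement) via Lemma \ref{submatrix}, then apply the square-case theorems at dimensions $m\wedge n$ and $m\vee n$, both of which are $\Theta(n)$. You spell out more explicitly than the paper why the dimension shift from $n$ to $m$ is harmless (the rates depend only on $\log n$ and $\log\log n$, and the exponents are continuous in $\delta$), but this is simply filling in the step the paper summarizes as ``the proof is complete.''
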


\begin{proof}
Let $M_{m\wedge n}$ be the top left $(m\wedge n)\times (m\wedge n)$ minor of $A_{m\times n}$ and $M_{m\vee n}$ be an $(m\vee n)\times (m\vee n)$ square i.i.d. matrix containing $A_{m\times n}$. Then by the lemma above,
\begin{align*}
\| M_{m\wedge n}\| \leq \| A_{m\times n}\| \leq  \| M_{m\vee n}\| .
\end{align*}
Applying the results of Section \ref{main results} to $M_{m\wedge n}$, $M_{m\vee n}$ and using $m\wedge n, m\vee n=\Theta (n)$, the proof is complete. 
\end{proof}

\subsection{Related results}

\subsubsection{Hermitian matrices}

Large deviation results for random matrices emerged from the seminal work of Arous and Guionnet \cite{ag97} who showed the LDP for the empirical measure of symmetric random matrices with Gaussian entries. The LDP for the largest eigenvalue of the Gaussian Orthogonal Ensemble (GOE) was shown in \cite{adg01} and later extended to general Wigner matrices with sub-Gaussian entries in \cite{au16, gh20, agh21}.

Recently, there has been great progress in the large deviation theory for sparse random matrices. 
In the supercritical regime $p\gg \frac{\log n}{n}$, Augeri and Basak \cite{ab23} obtained the LDP for the largest eigenvalue of sparse Hermitian matrices. In the same regime, large deviations for the empirical spectral measure was established by Augeri \cite{au24} using the method of quadratic vector equations.

Approaches in the constant average degree regime $p= \frac{d}{n}$ are mainly based on structural properties of sparse random graphs. The LLN behavior of the largest eigenvalue of Erd\H{o}s-R\'enyi graphs was first established in \cite{ks03}. In \cite{bbg21}, the authors proved a large deviation result for the extremal eigenvalues of $\mathcal{G}(n,p)$ including the constant average degree regime. As an extension of these results, the LDPs for the largest eigenvalue of sparse Hermitian matrices were computed for Gaussian \cite{gn22} and Weibull \cite{ghn22} entry distributions. Besides Erd\H{o}s-R\'enyi graphs and networks, the LLN for the largest eigenvalue of weighted random $d$-regular graphs with $d$ fixed was proven in \cite{ln23}.

\subsubsection{Non-Hermitian matrices}

The LDP for the empirical measure of Ginibre ensembles (whose convergence is governed by the circular law) was shown in \cite{az98}.
Similar results for singular values of rectangular i.i.d. matrices, or eigenvalues of sample covariance matrices (i.e., Wishart matrices) have been studied, starting from the Marchenko-Pastur law for the empirical measure \cite{mp67}. The LLN behavior of the largest and smallest singular values were analyzed in \cite{bky88, by93}. Large deviations for the largest eigenvalue of Wishart matrices were computed for Gaussian entries in \cite{mv09} and sharp sub-Gaussian entries in \cite{gh20}.
Recently, Husson and McKenna generalized the results to sample covariance matrices of the form $Z^{\intercal}\Gamma Z$ \cite{hm23}.

Despite these advances, there are still only a few results on the spectral properties of sparse non-Hermitian matrices, or weighted Erd\H{o}s-R\'enyi digraphs. As in the case of sparse Hermitian matrices, universality of the empirical measure holds in the regime $p\gg \frac{1}{n}$. Basak and Rudelson \cite{br19} showed a sparse version of the circular law for $p\gg \frac{\log^2n}{n}$ which was extended to $p\gg \frac{1}{n}$ by Rudelson and Tikhomirov \cite{rt19}. For extremal spectral observables, He \cite{he23} showed edge universality and LLN results for the spectral radii of $\mathcal{G}_d(n,p)$ in the regime $n^{-1+\varepsilon }\leq p\leq 1/2$. G\"{o}tze and Tikhomirov \cite{gt23} provided estimates for the largest and smallest singular values of sparse rectangular matrices in the regime $p\gg \frac{\log n}{n}$. 

Up to the author's knowledge, the only known result in the regime $p =\frac{d}{n}$ is an upper tail bound for the spectral radii of Erd\H{o}s-R\'enyi digraphs and networks with bounded edge-weights in \cite{bbk20}. It seems that there is no spectral large deviation result for sparse non-Hermitian matrices prior to this paper.

\subsection{Notations}

The notations of this paper will generally follow those of \cite{ghn22}.
Given a graph or digraph $G$, let $V=V(G)$ and $E=E(G)$ be the set of vertices and edges, respectively. The vertex set of a graph or digraph with $n$ vertices is considered as $V=[n]:= \lbrace 1,2,\ldots ,n\rbrace$. A network with underlying graph $G=(V,E)$ and conductance matrix $A$ is written as $G=(V,E,A)$.

For a graph $G=(V,E)$ and a vertex $v\in V(G)$, let $d(v)$ denote the degree of $v$ and $d_1(G)$ be the maximum degree of $G$. An undirected edge between the vertices $i,j$ where $i<j$ will be written as $(i,j)$.
Given a digraph $G=(V,E)$ and $v\in V(G)$, define $d^- (v)$ and $d^+(v)$ as the indegree and outdegree of $v$, respectively. The degree of $v$ is given by $d(v)=d^-(v)\vee d^+(v)$ and the degree of $G$, $d_1(G)$, is defined as in the undirected case. A directed edge from $i$ to $j$ is denoted by $(i,j)$. A digraph will be called a star if the corresponding undirected graph ignoring direction is an undirected star.

\subsection{Organization}

In Section 2, we provide a brief overview of the main ideas of our proof. Section 3 is devoted to spectral and structural properties of Erd\H{o}s-R\'enyi graphs and digraphs including the symmetrization and clique reduction processes which are the main ingredients of this work. Section 4 and 5 deal with proofs for the light-tailed and heavy-tailed cases, respectively. Finally, Section 6 is an appendix on binomial and Weibull tail estimates used throughout this paper.

\subsection{Acknowledgements}

The author thanks Kyeongsik Nam for helpful suggestions and advice.

\section{Overview of proof}

\subsection{Clique reduction and symmetrization}

Our main objective is to construct undirected networks corresponding to each connected component of the underlying Erd\H{o}s-R\'enyi digraph and to apply the estimates from \cite{ghn22}. This is possible since the spectral radius of a Hermitian matrix is equal to its spectral norm, and $-\lambda_n$ satisfies the same results as $\lambda_1$ where $\lambda_1, \lambda_n$ denote the largest and smallest eigenvalues, respectively.

While a clique (i.e., complete subgraph) with large edge-weights leads to a variational formulation of the rate function \eqref{chicken} for Hermitian matrices, we use a clique reduction process which transforms directed networks into undirected networks with maximum clique size $2$, thus reducing the rate function to an explicit formula $\psi_{\alpha ,\delta}(2)=(1+\delta )^{\alpha}-1$.
The main challenge is to preserve the independence of edge-weights, and the simplest way is by rearranging the edges without adding or deleting them. We proceed in the following steps (see Proposition \ref{reduction of cliques} for details):
\begin{enumerate}
\item Split each vertex $v$ into $v_+, v_-$ attached to outward and inward edges, respectively. This process reduces all cliques, both-sided edges, and self-loops (i.e., a directed edge $(v,v)$ connecting a vertex $v$ to itself) while preserving the spectral norm and tree-like structure of each connected component.
\item By step (1), all directed edges are one-sided and we may consider the undirected network with the same `shape'. This undirected network has spectral norm greater than or equal to the original one, and its adjacency matrix is the symmetrization (Definition \ref{symmetrization}) of the original one.
\end{enumerate}

Step (1) automatically reduces all triangles, and thus all cliques as shown in Figure \ref{figure1}.

\begin{figure}[h] 
\centering
\begin{tikzpicture}[thick,scale=0.8, every node/.style={scale=0.8}]
    \draw (90:1) node[above=1mm] {$1$} -- (210:1) node[below left] {$2$} -- (330:1) node[below right] {$3$} -- cycle;
    \begin{scope}[xshift=4cm]
        \node (1) at (90:1) {}; \fill (90:1) circle (1pt) node[above=1mm] {$1$};
        \node (2) at (210:1) {}; \fill (210:1) circle (1pt) node[below left] {$2$};
        \node (3) at (330:1) {}; \fill (330:1) circle (1pt) node[below right] {$3$};
        \draw[<->] (1) -- (2);
        \draw[<->] (2) -- (3);
        \draw[<->] (3) -- (1);
        \draw[-latex, very thick] (1.3, .2) -- (2, .2);
    \end{scope}
    \begin{scope}[xshift=8cm, yshift=.2cm]
        %\node (1+) at (120:1) {}; \fill (120:1) circle (1pt) node[above left] {\(1\text{\makebox[0pt][l]{$+$}}\)};
        %\node (1-) at (60:1) {}; \fill (60:1) circle (1pt) node[above right] {\(1\text{\makebox[0pt][l]{$-$}}\)};
        %\node (2+) at (0:1) {}; \fill (0:1) circle (1pt) node[right] {$2+$};
        %\node (2-) at (-60:1) {}; \fill (-60:1) circle (1pt) node[below right] {\(2\text{\makebox[0pt][l]{$-$}}\)};
        %\node (3+) at (240:1) {}; \fill (240:1) circle (1pt) node[below left] {\(3\text{\makebox[0pt][l]{$+$}}\)};
        %\node (3-) at (180:1) {}; \fill (180:1) circle (1pt) node[left] {$3-$};
        \node (1+) at (115:1) {}; \fill (115:1) circle (1pt) node[above left] {\(1\text{\makebox[0pt][l]{$+$}}\)};
        \node (1-) at (65:1) {}; \fill (65:1) circle (1pt) node[above right] {\(1\text{\makebox[0pt][l]{$-$}}\)};
        \node (2+) at (-5:1) {}; \fill (-5:1) circle (1pt) node[right] {$2+$};
        \node (2-) at (-55:1) {}; \fill (-55:1) circle (1pt) node[below right] {\(2\text{\makebox[0pt][l]{$-$}}\)};
        \node (3+) at (235:1) {}; \fill (235:1) circle (1pt) node[below left] {\(3\text{\makebox[0pt][l]{$+$}}\)};
        \node (3-) at (185:1) {}; \fill (185:1) circle (1pt) node[left] {$3-$};
        \draw[->] (1+) -- (3-);
        \draw[->] (1+) -- (2-);
        \draw[->] (2+) -- (3-);
        \draw[->] (2+) -- (1-);
        \draw[->] (3+) -- (1-);
        \draw[->] (3+) -- (2-);
    \end{scope}
\end{tikzpicture}
\caption{The first figure shows an undirected triangle. The second figure is a directed triangle with all edges present, which is reduced to the third graph without cliques.}
\label{figure1}
\end{figure}
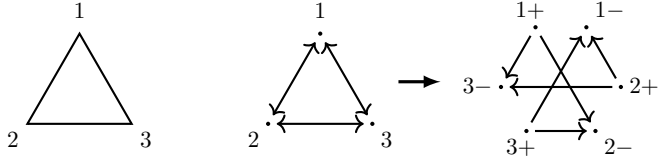
This clique reduction process is effective exclusively for directed networks. For if we consider each undirected edge $\overline{ij}$ as two directed edges $\overrightarrow{ij}$ and $\overrightarrow{ji}$, the edge-weights $w_{ij}$ and $w_{ji}$ must be equal. Then tail estimates for the spectral norm involve the sum of non-independent variables, and even the derivation of LLN results are impossible.

\subsection{Light-tailed weights}

For light-tailed weights, we follow the proof of the analogous problem for undirected networks by replacing undirected stars by directed stars, thus obtaining the same LLN and LDP results as in \cite{ghn22}. The underlying digraph $X$ is split into $X^{(1)}$ and $X^{(2)}$, based on truncation of edge-weights.
Under a high probability event, $X^{(1)}$ is split into a group of vertex-disjoint directed stars and spectrally negligible components with relatively small degree. Typically there are $n^{1-\gamma}$ directed stars with degree approximately $\gamma \frac{\log n}{\log \log n}$. The upper bound is established by calculating the contribution of these directed stars using Weibull tail estimates and Lemma \ref{directed star}. The lower bound is treated in a simpler way by placing a column vector with $\gamma_{\delta}\frac{\log n}{\log \log n}$ nonzero entries for $\gamma_{\delta}:=(1+\delta )^2(1-\frac{2}{\alpha})$.

\subsection{Heavy-tailed weights}

Applying the clique reduction technique (Proposition \ref{reduction of cliques}) to each connected component of $Z^{(1)}$ (the network corresponding to $X^{(1)}$), we reformulate the problem into estimating the spectral norm of undirected networks with maximum clique size $2$. Then the upper tail rate function is obtained by setting the maximum clique size $k=2$ in \eqref{chicken}, the upper tail rate function of \cite{ghn22}. The lower tail LDP is governed by the emergence of a single large edge-weight.

\section{Properties of graphs and digraphs}

\subsection{Structure of subcritical Erd\H{o}s-R\'enyi graphs}

Here we recall some properties of subcritical Erd\H{o}s-R\'enyi graphs $\mathcal{G}(n,q)$. As in \cite{ghn22}, we consider the highly subcritical regime 
\begin{align}
q \leq \frac{d}{n(\log n)^{\varepsilon}}
\end{align}
for some constants $d,\varepsilon >0$. Let $G$ be a graph such that $G\sim \mathcal{G}(n,q)$.

\begin{lemma}[{\cite[Lemma 5.3]{gn22}}] \label{d}
For $\delta_1 >0$, let $\mathcal{D}_{\delta_1}$ be an event defined by
\begin{align}
\mathcal{D}_{\delta_1} := \left\lbrace d_1 (G)\leq (1+\delta_1 )\frac{\log n}{\log \log n}\right\rbrace .
\end{align}
Then
\begin{align*}
\lim_{n\to\infty} \frac{-\log \mathbb{P}(\mathcal{D}_{\delta_1}^c)}{\log n}\geq \delta_1 .
\end{align*}
\end{lemma}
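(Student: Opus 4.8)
The plan is to bound $\mathbb{P}(\mathcal{D}_{\delta_1}^c) = \mathbb{P}\bigl(d_1(G) > (1+\delta_1)\tfrac{\log n}{\log\log n}\bigr)$ by a union bound over vertices: a vertex $v$ has degree at least $k$ with probability at most $\binom{n-1}{k}q^k \le \bigl(\tfrac{enq}{k}\bigr)^k$, so
\begin{align*}
\mathbb{P}(\mathcal{D}_{\delta_1}^c) \le n\binom{n-1}{k}q^k \le n\Bigl(\frac{enq}{k}\Bigr)^k
\end{align*}
with $k = \lceil (1+\delta_1)\tfrac{\log n}{\log\log n}\rceil$. Using the highly subcritical bound $q \le \tfrac{d}{n(\log n)^\varepsilon}$ we get $nq \le \tfrac{d}{(\log n)^\varepsilon} \le d$, so $\tfrac{enq}{k} \le \tfrac{ed}{k} \to 0$, and the dominant term is $k^{-k}$. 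Taking logarithms,
\begin{align*}
-\log \mathbb{P}(\mathcal{D}_{\delta_1}^c) \ge -\log n + k\log k - k\log(enq).
\end{align*}

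Now I would estimate each term after dividing by $\log n$. With $k \sim (1+\delta_1)\tfrac{\log n}{\log\log n}$, we have $\log k = \log\log n - \log\log\log n + \log(1+\delta_1) + o(1) = (1+o(1))\log\log n$, hence
\begin{align*}
\frac{k\log k}{\log n} = (1+\delta_1)\frac{\log n}{\log\log n}\cdot\frac{(1+o(1))\log\log n}{\log n} = (1+\delta_1)(1+o(1)) \longrightarrow 1+\delta_1.
\end{align*}
The term $\tfrac{k\log(enq)}{\log n}$ is $o(1)$ because $nq \le d$ is bounded (in fact $\log(enq) \le \log(ed)$ is $O(1)$ while $k/\log n = O(1/\log\log n) \to 0$), and $\tfrac{\log n}{\log n} = 1$. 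Combining, $\liminf_n \tfrac{-\log\mathbb{P}(\mathcal{D}_{\delta_1}^c)}{\log n} \ge -1 + (1+\delta_1) - 0 = \delta_1$, which is the claim. One should also handle the degenerate case $\mathbb{P}(\mathcal{D}_{\delta_1}^c) = 0$ trivially (the $\liminf$ is then $+\infty$).

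There is essentially no serious obstacle here; the only point requiring a little care is the bookkeeping of the logarithmic factors in $\log k$—one must confirm that $\log\log\log n$ and the constant $\log(1+\delta_1)$ are genuinely negligible against $\log\log n$, so that $k\log k / \log n \to 1+\delta_1$ exactly rather than to something smaller. Since this lemma is quoted verbatim from \cite{gn22}, the cleanest route is simply to cite that reference; the sketch above records the argument for completeness. (For the directed case used later in the paper, the same estimate applies with $d(v) = d^-(v)\vee d^+(v)$ and a union bound over the $\le 2n$ events $\{d^\pm(v) \ge k\}$, which only changes $n$ to $2n$ in the prefactor and does not affect the limit.)
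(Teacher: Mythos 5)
Your argument is correct and is the standard one; the paper gives no proof of its own, citing \cite[Lemma 5.3]{gn22}, and your union-bound-plus-binomial-tail route is precisely the expected reconstruction. One small imprecision worth flagging: you assert that $k\log(enq)/\log n = o(1)$. In the highly subcritical regime $q \le d/(n(\log n)^{\varepsilon})$ one has $nq \le d/(\log n)^{\varepsilon}\to 0$, so $\log(enq)\to -\infty$ and the quantity $k\log(enq)/\log n$ is not $o(1)$ in absolute value. However, since it appears in the lower bound for $-\log\mathbb{P}(\mathcal{D}_{\delta_1}^c)$ with a favorable sign (one has $-k\log(enq)\ge 0$ for $n$ large), the correct and sufficient one-sided estimate is $k\log(enq)/\log n \le k\log(ed)/\log n = O(1/\log\log n)\to 0$, which is exactly what the inequality direction requires; the conclusion is unaffected. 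Your remark about the directed variant is harmless but unnecessary, as the lemma is stated for $\mathcal{G}(n,q)$ and applied in the paper only to the symmetrization $\tilde{X}^{(1)}$, which is an undirected Erd\H{o}s--R\'enyi graph.
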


Let $C_1,\ldots ,C_{\ell}$ be the connected components of $G$. 

\begin{lemma}[{\cite[Lemma 5.4]{gn22}}, {\cite[Lemma 4.9]{ghn22}}] \label{c}
For $\delta_2 >0$, let $\mathcal{C}_{\varepsilon ,\delta_2}$ be the following event.
\begin{align}
\mathcal{C}_{\varepsilon ,\delta_2} := \left\lbrace \max_{1\leq i\leq \ell} |C_i| \leq \frac{1+\delta_2}{\varepsilon}\frac{\log n}{\log \log n} \right\rbrace .
\end{align}
Then,
\begin{align*}
\liminf_{n\to\infty}\frac{-\log \mathbb{P}(\mathcal{C}_{\varepsilon ,\delta_2}^c)}{\log n}\geq \delta_2 .
\end{align*}
\end{lemma}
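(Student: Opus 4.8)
The plan is a standard first-moment bound over spanning trees, which is the argument of \cite[Lemma 5.4]{gn22} and \cite[Lemma 4.9]{ghn22}. Set $k_0 := \lceil \frac{1+\delta_2}{\varepsilon}\frac{\log n}{\log\log n}\rceil$. The key reduction is that $\mathcal{C}_{\varepsilon,\delta_2}^c$, the event that some component $C_i$ has more than $k_0$ vertices, is contained in the event that there exists a set $S\subseteq [n]$ with $|S|=k_0$ such that $G[S]$ is connected: given an oversized component, take a spanning tree of it and delete leaves one at a time until exactly $k_0$ vertices remain, and let $S$ be the surviving vertex set; the surviving subtree lies in $G[S]$, so $G[S]$ is connected. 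Hence it suffices to bound $\mathbb{P}(\exists\, S,\ |S|=k_0,\ G[S]\text{ connected})$, and no summation over component sizes is needed.

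For this I would union bound over the choice of $S$ and over the possible spanning trees of $G[S]$. There are $\binom{n}{k_0}$ choices of $S$; on a fixed $k_0$-set there are $k_0^{k_0-2}$ labeled trees by Cayley's formula; and a fixed tree, having $k_0-1$ edges, is a subgraph of $G$ with probability exactly $q^{k_0-1}$. Therefore
\begin{align*}
\mathbb{P}(\mathcal{C}_{\varepsilon,\delta_2}^c) \le \binom{n}{k_0} k_0^{k_0-2} q^{k_0-1}.
\end{align*}

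Finally I would plug in $\binom{n}{k_0}\le (en/k_0)^{k_0}$ and $q\le \frac{d}{n(\log n)^{\varepsilon}}$ and simplify: the factors $n^{k_0}$ and $k_0^{\pm k_0}$ cancel, leaving
\begin{align*}
\mathbb{P}(\mathcal{C}_{\varepsilon,\delta_2}^c) \le \frac{n}{d\,k_0^2}\,(ed)^{k_0}\,(\log n)^{-\varepsilon(k_0-1)}.
\end{align*}
Since $k_0=\Theta(\log n/\log\log n)$, the factor $(ed)^{k_0}=\exp(O(\log n/\log\log n))$ is $n^{o(1)}$, and $k_0^{-2}$ is also $n^{o(1)}$; meanwhile $(\log n)^{-\varepsilon(k_0-1)}=\exp(-\varepsilon(k_0-1)\log\log n)$, and the defining choice of $k_0$ gives $\varepsilon k_0\log\log n\ge (1+\delta_2)\log n$, so this factor is at most $n^{-(1+\delta_2)}(\log n)^{\varepsilon}=n^{-(1+\delta_2)+o(1)}$. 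Multiplying by the leading $n$ yields $\mathbb{P}(\mathcal{C}_{\varepsilon,\delta_2}^c)\le n^{-\delta_2+o(1)}$, i.e. $\liminf_{n\to\infty}-\log\mathbb{P}(\mathcal{C}_{\varepsilon,\delta_2}^c)/\log n\ge \delta_2$.

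The only delicate point — and the step I would be most careful about — is the exact bookkeeping of the power of $n$: one must confirm that the single factor $n$ arising from $\binom{n}{k_0}q^{k_0-1}$ is cancelled down to precisely $n^{-\delta_2}$ by the $(\log n)^{-\varepsilon k_0}$ term under the chosen $k_0$, and that the remaining factors (namely $(ed)^{k_0}$, $k_0^{-2}$, the ceiling in the definition of $k_0$, and the spurious $(\log n)^{\varepsilon}$ coming from the ``$-1$'' in the exponent) genuinely contribute only $n^{o(1)}$ and do not erode the constant $\delta_2$.
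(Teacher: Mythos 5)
Your proof is correct, and it is the standard first-moment / spanning-tree argument that \cite[Lemma 5.4]{gn22} and \cite[Lemma 4.9]{ghn22} use; the present paper does not reprove the lemma but simply cites those sources. The reduction to a connected induced subgraph on exactly $k_0$ vertices, the Cayley-formula union bound $\binom{n}{k_0}k_0^{k_0-2}q^{k_0-1}$, and the final bookkeeping (with $\varepsilon k_0\log\log n\geq(1+\delta_2)\log n$ killing the stray factor of $n$ and $(ed)^{k_0}=n^{o(1)}$) all check out, so the bound $n^{-\delta_2+o(1)}$ and the stated $\liminf$ follow.
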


\begin{lemma}[{\cite[Lemma 5.6]{gn22}}, {\cite[Lemma 4.10]{ghn22}}] \label{e}
For $\delta_3 >0$, let $\mathcal{E}_{\delta_3}$ be the event defined by
\begin{align}
\mathcal{E}_{\delta_3} := \left\lbrace \max_{1\leq i\leq \ell} \lbrace |E(C_i)| -|V(C_i)|\rbrace \leq \delta_3 \right\rbrace .
\end{align}
Then,
\begin{align*}
\liminf_{n\to\infty}\frac{-\log \mathbb{P}(\mathcal{E}_{\delta_3}^c)}{\log n} \geq \delta_3 .
\end{align*}
\end{lemma}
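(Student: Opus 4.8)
The plan is to reduce the event $\mathcal{E}_{\delta_3}^c$ to the appearance of a small, edge-dense, connected subgraph, and then close with a first-moment bound. Write $k_0:=\lfloor\delta_3\rfloor+1$ for the least integer strictly above $\delta_3$; since $\delta_3>0$ we have $k_0\geq1$. If some component $C$ of $G$ satisfies $|E(C)|-|V(C)|>\delta_3$ then, both quantities being integers, in fact $|E(C)|-|V(C)|\geq k_0$, so the vertex set $S:=V(C)$ induces a connected subgraph of $G$ with at least $|S|+k_0$ edges. Consequently,
\begin{align*}
\mathbb{P}(\mathcal{E}_{\delta_3}^c)\;\leq\;\sum_{s\geq4}\binom{n}{s}\,\mathbb{P}\big(G[S_0]\text{ connected},\ |E(G[S_0])|\geq s+k_0\big),
\end{align*}
where $S_0$ is an arbitrary fixed set of size $s$ and the terms with $s<4$ vanish.

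Next I would bound the inner probability. A connected graph on $s$ vertices with at least $s+k_0$ edges contains a spanning tree ($s-1$ edges) together with $k_0+1$ further edges, so a union bound over the $s^{s-2}$ spanning trees (Cayley's formula) and the $\binom{\binom{s}{2}}{k_0+1}\leq s^{2(k_0+1)}$ choices of surplus edges gives the bound $s^{s-2}\,s^{2(k_0+1)}\,q^{s+k_0}$ for that probability. Inserting $\binom{n}{s}\leq(en/s)^s$ and $q\leq\frac{d}{n(\log n)^{\varepsilon}}$, the factor $s^{-s}$ coming from $\binom{n}{s}$ exactly cancels the Cayley factor $s^{s-2}$ and $n^{s}q^{s-1}$ collapses to a single factor $n$, so that, up to a constant depending only on $d$ and $k_0$,
\begin{align*}
\mathbb{P}(\mathcal{E}_{\delta_3}^c)\;\lesssim\;n\,q^{k_0+1}(\log n)^{\varepsilon}\sum_{s\geq4}s^{2k_0}\big(ed(\log n)^{-\varepsilon}\big)^{s}.
\end{align*}
For $n$ large the common ratio $ed(\log n)^{-\varepsilon}$ drops below $\tfrac12$, so the series is bounded (indeed $\to0$), while $n\,q^{k_0+1}(\log n)^{\varepsilon}\lesssim n^{-k_0}(\log n)^{-\varepsilon k_0}$. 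Hence $\mathbb{P}(\mathcal{E}_{\delta_3}^c)\leq C\,n^{-k_0}(\log n)^{-\varepsilon k_0}$, and taking $-\log$, dividing by $\log n$ and letting $n\to\infty$ yields $\liminf_n\frac{-\log\mathbb{P}(\mathcal{E}_{\delta_3}^c)}{\log n}\geq k_0>\delta_3$, slightly stronger than claimed. If one prefers the sum over $s$ to be manifestly finite, one may first intersect with the small-components event $\mathcal{C}_{\varepsilon,\delta_3}$ of Lemma \ref{c}, truncating at $s=O(\log n/\log\log n)$; but the highly subcritical hypothesis already makes the untruncated series converge.

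The main obstacle is precisely the control of this sum over the \emph{a priori} unknown component size $s$. A crude union bound over $s$-element subsets of $[n]$ carries $\binom{n}{s}\sim(n/s)^s$, whose $s^s$ growth is not compensated by the factor $q^s\sim(d/n)^s$ coming from the edges, so the series would diverge. The standard remedy, as in \cite{gn22,ghn22}, is to peel off a spanning tree first, so that Cayley's count $s^{s-2}$ absorbs the $s^{-s}$ exactly; the residual series is then rendered geometric by the extra polylogarithmic smallness $(\log n)^{-\varepsilon}$ of $q$, which is the whole reason for working in the highly subcritical regime. The only other point, purely bookkeeping, is the integrality observation that upgrades the strict inequality in the definition of $\mathcal{E}_{\delta_3}^c$ to the excess bound $|E(C)|-|V(C)|\geq k_0$, so that it suffices to exhibit $k_0+1$ surplus edges.
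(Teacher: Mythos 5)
The paper does not prove this lemma itself; it is imported verbatim from \cite{gn22} and \cite{ghn22}, so there is no internal proof to compare against. Your argument is a correct, self-contained proof and is of the same flavor as what one finds in those references: a first-moment bound over vertex sets, with the connectivity condition witnessed by a spanning tree enumerated via Cayley's formula so that the $s^{s-2}$ factor cancels the $s^{-s}$ from $\binom{n}{s}\le(en/s)^s$, and with the excess $k_0+1$ surplus edges supplying the extra $q^{k_0+1}$ that drives the bound below $n^{-k_0}$. Two small bookkeeping checks that confirm the argument: the integrality upgrade from $>\delta_3$ to $\ge k_0:=\lfloor\delta_3\rfloor+1$ is correct, and $k_0>\delta_3$ strictly, so you indeed prove the slightly stronger rate $k_0$; and after cancellation the $s$-th term is $\lesssim q^{k_0}\,s^{2k_0}(ed(\log n)^{-\varepsilon})^{s}$, so the ratio $ed(\log n)^{-\varepsilon}\to0$ makes the series bounded and $\mathbb{P}(\mathcal{E}_{\delta_3}^c)\lesssim q^{k_0}\lesssim n^{-k_0}(\log n)^{-\varepsilon k_0}$, as claimed. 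Your remark that the $(\log n)^{-\varepsilon}$ smallness of $q$ is precisely what tames the otherwise marginal geometric series is the right thing to emphasize: at $q=d/n$ with $ed>1$ the naive sum would diverge, and one would instead have to truncate at $s=O(\log n/\log\log n)$ using Lemma \ref{c}, as you also note.
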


\subsection{Spectral properties of digraphs}

Here we provide methods to control the spectral and structural properties of directed networks using the results known for undirected networks.
We begin with a formal definition for associating directed networks with their undirected, or symmetrized versions which will be used multiple times throughout this paper.

\begin{definition} \label{symmetrization}
\upshape{
For an $n\times n$ matrix $A$, define its \textit{symmetrization} $\tilde{A}$ by
\begin{align*}
\tilde{A}_{ij}=\begin{cases}
A_{ij}\vee A_{ji} & \text{if $i\neq j$}\\
0 & \text{if $i=j$}
\end{cases}
\end{align*}
for $1\leq i,j\leq n$. The symmetrization $\tilde{G}$ of a directed network $G=(V,E,A)$ is defined as the undirected network corresponding to the adjacency matrix $\tilde{A}$.
}
\end{definition}

Especially, if $G$ has no self-loops and all edges are present in only one direction, then $\tilde{G}$ is formed simply by `ignoring direction' in $G$. Note that in the same situation, if the adjacency matrix of $G$ has i.i.d. entries, then the same holds for the adjacency matrix of $\tilde{G}$.

We now introduce a technique that decomposes cliques while not decreasing the spectral norm. This process reduces all cliques, both-sided edges, and self-loops in a directed network while preserving its tree-like structure.

\begin{proposition} \label{reduction of cliques}
Let $W$ be a connected directed network with nonnegative edge-weights. Then there exists an undirected network $H$ that satisfies the following properties:
\begin{enumerate}
\item $|E(H)|=|E(W)|$.
\item $|V(H)|\leq 2|V(W)|$.
\item $d_1(H)\leq d_1(W)$.
\item The size of the maximum clique in $H$ is $2$ and the edge-weights of $H$ are just a rearrangement of those of $W$.
\item Suppose $S(W)$ is the number of self-loops in $W$. 
If $H_1,\ldots ,H_m$ are the components of $H$, then
\begin{align*}
|E(H_i)|-|V(H_i)|\leq 2(|E(\tilde{W})|-|V(\tilde{W})|)+S(W)
\end{align*}
for $1\leq i\leq m$.
\item ${\displaystyle \| W\| \leq \| H\| =\max_{1\leq i\leq m}\| H_i\|}$.
\end{enumerate}
\end{proposition}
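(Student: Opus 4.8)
The plan is to construct $H$ explicitly via the two-step procedure outlined in the overview (Section 2.1), then verify properties (1)--(6) one at a time. Step one: for each vertex $v\in V(W)$, introduce two new vertices $v_+$ and $v_-$; for every directed edge $(u,v)$ of $W$ (including the case $u=v$, a self-loop), place a directed edge from $u_+$ to $v_-$ carrying the same weight. Call the resulting directed network $W'$. By construction $W'$ has no self-loops, no both-sided edges, and no cliques of size $\geq 3$, because every edge of $W'$ goes from a ``$+$''-vertex to a ``$-$''-vertex, so the underlying undirected graph of $W'$ is bipartite with parts $\{v_+\}$ and $\{v_-\}$ and hence triangle-free. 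Step two: let $H$ be the symmetrization $\widetilde{W'}$, i.e. the undirected network obtained by ignoring direction in $W'$; since $W'$ has one-sided edges and no self-loops, by the remark following Definition \ref{symmetrization} this is just ``forgetting orientation,'' and $H$ inherits the triangle-free (in fact bipartite) structure, giving (4) once we observe the edge-weights are literally reused. Properties (1) and (2) are immediate from the construction: each edge of $W$ contributes exactly one edge of $H$, and $|V(H)|\le 2|V(W)|$ (with equality unless some vertex is isolated in one direction).

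For property (3), I would track degrees carefully. In $W'$ the outdegree of $v_+$ equals the outdegree $d^+_W(v)$ of $v$ in $W$, and the indegree of $v_-$ equals $d^-_W(v)$; both ``$-$''-vertices have outdegree $0$ and ``$+$''-vertices have indegree $0$. Hence in the undirected graph $H$ the degree of $v_+$ is $d^+_W(v)$ and the degree of $v_-$ is $d^-_W(v)$, so $d_1(H) = \max_v\big(d^+_W(v)\vee d^-_W(v)\big) = \max_v d_W(v) = d_1(W)$ by the definition of the degree of a directed vertex; in particular $d_1(H)\le d_1(W)$. For property (6), the spectral norm: the adjacency matrix of $W'$ is, up to permutation of rows and columns, a $2\times 2$ block matrix $\begin{bmatrix} 0 & A \\ 0 & 0\end{bmatrix}$ (or a submatrix thereof after dropping isolated split-vertices), where $A$ is obtained from the adjacency matrix of $W$ by the vertex-splitting; one checks $\|W\| = \|A_W\|\le \|A\|$ because $A$ is built by \emph{spreading} the entries of $A_W$ into distinct rows/columns (no cancellation, nonnegative entries), and $\|W'\| = \|A\|$. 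Then passing to $H$ replaces $A$ by the Hermitian matrix $\begin{bmatrix}0 & A\\ A^{\intercal} & 0\end{bmatrix}$, whose operator norm equals $\|A\| = \|W'\|\ge\|W\|$; and since $H$ is a disjoint union of components $H_1,\dots,H_m$, its norm is the max of the component norms. I would state the spreading inequality $\|A_W\|\le\|A\|$ as a short lemma (it follows from Lemma \ref{submatrix}-type reasoning, or directly: for a test vector $v$ on $W$, distribute its mass appropriately across the split coordinates to get a test vector for $W'$ achieving at least the same value).

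The main obstacle is property (5), the bound on $|E(H_i)|-|V(H_i)|$ for each component. The key point is a counting/topology argument relating the first Betti number (number of independent cycles, $=|E|-|V|+(\#\text{components})$) of $H$ to that of $\widetilde W$. A connected component $H_i$ of $H$ corresponds, under the projection $v_\pm\mapsto v$, to a connected subnetwork of $W$ sitting inside some connected component $\widetilde{W}_j$ of $\widetilde W$; the projection is surjective on edges but can merge the two endpoints-images, so $|V(H_i)| \ge |V(\text{image})|$ while $|E(H_i)| = |E(\text{image})|$ counted without the collapse of both-sided edges or self-loops. Carefully, the splitting can \emph{at most double} the cycle rank and the self-loops of $W$ each become a single edge in a two-vertex component contributing $|E|-|V| = 1 - 2 = -1$, hence harmless; both-sided edges of $W$ each split into two parallel-in-projection edges, contributing to cycles. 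Quantitatively, $|E(H)| = |E(W)|$ and each cycle in $H$ projects to a closed walk in $W$, and one shows the cycle rank of any component $H_i$ is at most $2(|E(\widetilde W)|-|V(\widetilde W)|) + S(W) + 1$ (the $+1$ because ``cycle rank of a connected graph $= |E|-|V|+1$''), which rearranges to the claimed inequality. I expect the bookkeeping here -- correctly accounting for how both-sided edges, self-loops, and the vertex-doubling each affect the Euler characteristic of a single component, and reducing everything to the excess $|E(\widetilde W)|-|V(\widetilde W)|$ of the symmetrization -- to be the delicate part, and I would prove it by induction on $S(W)$ plus the number of both-sided edges, peeling them off one at a time and checking the excess increases by a controlled amount at each step.
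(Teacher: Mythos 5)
Your construction is the same as the paper's: split each vertex $v$ into an out-side $v_+$ and an in-side $v_-$, route every edge of $W$ (including self-loops) from $u_+$ to $v_-$, and symmetrize. Properties (1)--(4) are verified correctly, and your block-matrix argument for (6) --- write $W'$ as $\bigl(\begin{smallmatrix}0 & A\\ 0 & 0\end{smallmatrix}\bigr)$ so that $H$ is $\bigl(\begin{smallmatrix}0 & A\\ A^{\intercal} & 0\end{smallmatrix}\bigr)$, both with norm $\|A\|$ --- is a clean and equivalent rephrasing of the paper's supremum computation. One small remark: the ``spreading'' inequality $\|A_W\|\leq\|A\|$ you invoke is actually an equality, since after deleting zero rows and columns $A$ is just $A_W$ with its diagonal entries relocated to the off-diagonal positions $A[i_+,i_-]$; no separate lemma is needed.

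The genuine gap is property (5), which you correctly flag as the hard part but do not prove. You assert that the cycle rank of each $H_i$ is at most $2(|E(\tilde W)|-|V(\tilde W)|)+S(W)+1$ and propose an induction ``peeling off'' both-sided edges and self-loops, but the induction is not carried out and its key invariant (how the excess $|E(W')|-|V(W')|$ changes at each step) is not identified. The paper runs the induction in the opposite direction: starting from the subgraph $D\subseteq\tilde W$ spanned by both-sided edges, it \emph{adds} the missing reverse directions of one-sided edges incident to $D$, showing at each step that $|E(W')|-|V(W')|$ does not decrease, until $D=\tilde W$ where $|E(W')|=2|E(\tilde W)|+S(W)$ and $|V(W')|=2|V(W)|$ give the bound. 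To handle a disconnected $W'$, the paper identifies each component $W'_i$ with the image of a connected subgraph $G\subseteq W$ and invokes Lemma \ref{e-v lemma} (the nullity inequality $|E(\tilde G)|-|V(\tilde G)|\leq|E(\tilde W)|-|V(\tilde W)|$ for subgraphs of a connected graph) --- a reduction step your sketch gestures at but does not justify. Without carrying out the monotonicity argument and the subgraph reduction, your treatment of (5) remains an unproved claim.
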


\begin{proof}
We begin with the construction of $H$. For each $v\in V(W)$, if $v$ has only inward (outward) edges, rename $v$ by $v_-$ ($v_+$). If $v$ has both inward and outward edges, split the point $v$ into $v_+$ and $v_-$ so that $v_+$ is connected to outward edges and $v_-$ to inward edges. A self-loop at $v$ becomes a directed edge from $v_+$ to $v_-$. Define $W'$ as the network obtained by applying this process for all $v\in V(W)$. Finally define $H=\tilde{W}'$.

\begin{figure}[h]
\centering
\begin{tikzpicture}[font={\fontsize{8pt}{12}\selectfont}]
    \begin{scope}
        \matrix[row sep={1cm,between origins}, column sep={1cm,between origins}]{
            & \bnode[left]{a}{$a$} & & & \\
            \bnode[left]{b}{$b$}
            & \bnode[above left]{c}{$c$}
            & \bnode[above left]{d}{$d$}
            & \bnode[above]{e}{$e$}
            & \bnode[above]{f}{$f$}\\
            \bnode[left]{g}{$g$}
            & \bnode[below]{h}{$h$}
            & \bnode[below]{i}{$i$}
            & \bnode[below]{j}{$j$}\\
        };
        \draw[->] (b) -- (c);
        \draw[->] (c) -- (a);
        \draw[->] (c) -- (h);
        \draw[->] (h) -- (g);
        \draw[->] (h) -- (i);
        \draw[->] (j) -- (i);
        \draw[->] (h) -- (d);
        \draw[->] (e) -- (j);
        \draw[->] (e) -- (f);
        \draw[->] (j) -- (d);
        \draw[<->] (c) -- (d);
        \draw[<->] (d) -- (i);
        \draw[<->] (c) -- (i);
        \draw[->] (d) edge[in=0,out=90,loop] ();
    \end{scope}
    \draw[-latex, very thick] (2.5, 0) -- (3.2, 0);
    \begin{scope}[xshift=5.9cm]
        \matrix[row sep={1cm,between origins}, column sep={1cm,between origins}]{
            &[-.333cm] \bnode[above]{a-}{$a_-$} &[-.667cm] & &[-.667cm] & & \\[-.333cm]
            \bnode[left]{b+}{$b_+$}
            & & \bnode[above]{c-}{$c_-$}
            & \bnode[above]{d+}{$d_+$}\\[-.667cm]
            & \bnode[left]{c+}{$c_+$}
            & & & \bnode[right, yshift=1pt]{d-}{$d-$}
            & \bnode[above]{e+}{$e_+$}
            & \bnode[above]{f-}{$f_-$}\\
            & \bnode[left]{h-}{$h_-$}
            & & & \bnode[right]{i+}{$i_+$}
            & \bnode[right]{j-}{$j_-$}\\[-.667cm]
            \bnode[left]{g-}{$g_-$}
            & & \bnode[below]{h+}{$h_+$}
            & \bnode[below]{i-}{$i_-$}
            & & \bnode[below]{j+}{$j_+$}\\
        };
        \draw[->] (c+) -- (a-);
        \draw[->] (b+) -- (c-);
        \draw[->] (c+) -- (h-);
        \draw[->] (h+) -- (g-);
        \draw[->] (h+) -- (i-);
        \draw[->] (j+) -- (i-);
        \draw[->] (d+) -- (i-);
        \draw[->] (d+) -- (c-);
        \draw[->] (i+) -- (d-);
        \draw[->] (j+) -- (d-);
        \draw[->] (e+) -- (f-);
        \draw[->] (e+) -- (j-);
        \draw[->] (c+) -- (d-);
        \draw[->] (c+) -- (i-);
        \draw[->] (i+) -- (c-);
        \draw[->] (d+) edge[bend left=45, looseness=1.5] (d-);
    \end{scope}
    \draw[-latex, very thick] (8.7, 0) -- (9.4, 0);
    \begin{scope}[xshift=11.4cm]
        \matrix[row sep={1cm,between origins}, column sep={1cm,between origins}]{
            & & & \bnode[left]{a-}{$a_-$} & \\
            \bnode[left]{b+}{$b_+$}
            & \bnode[above]{c-}{$c_-$}
            & \bnode[above]{i+}{$i_+$}
            & \bnode[left]{c+}{$c_+$}
            & \bnode[above]{h-}{$h_-$}\\
            & \bnode[left]{d+}{$d_+$}
            & \bnode[above left]{d-}{$d_-$}
            & \bnode[right]{j+}{$j_+$}\\
            & \bnode[below]{g-}{$g_-$}
            & \bnode[below]{h+}{$h_+$}
            & \bnode[below right]{i-}{$i_-$}\\
            & \bnode[below]{j-}{$j_-$}
            & \bnode[below]{e+}{$e_+$}
            & \bnode[below]{f-}{$f_-$}\\
        };
        \draw[->] (b+) -- (c-);
        \draw[->] (i+) -- (c-);
        \draw[->] (d+) -- (c-);
        \draw[->] (d+) -- (d-);
        \draw[->] (i+) -- (d-);
        \draw[->] (h+) -- (d-);
        \draw[->] (h+) -- (g-);
        \draw[->] (h+) -- (i-);
        \draw[->] (j+) -- (d-);
        \draw[->] (j+) -- (i-);
        \draw[->] (c+) -- (a-);
        \draw[->] (c+) -- (h-);
        \draw[->] (c+) -- (d-);
        \draw[->] (e+) -- (j-);
        \draw[->] (e+) -- (f-);
        \draw[->] (c+) edge[bend left=60, looseness=1.2] (i-);
        \draw[->, rounded corners] (d+) -- +(-.333cm, -.667cm) -- +(-.333cm, -1.667cm) -- +(2cm, -1.667cm) -- (i-);
    \end{scope}
\end{tikzpicture}
\caption{Applying the clique reduction to the first figure yields the second figure. The last figure is a redrawing of the second one which shows that no cliques exist.}
\end{figure}
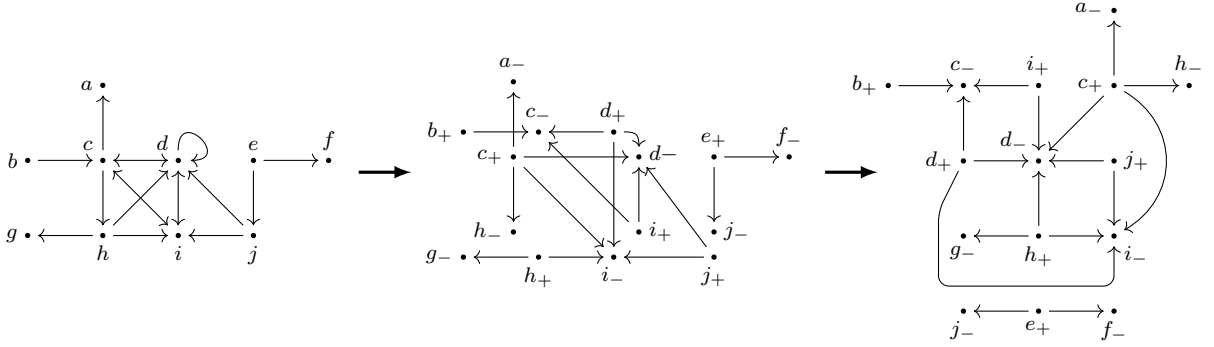

Since the construction of $H$ is a simple rearrangement of edges and every vertex is left unchanged or duplicated, properties (1) to (3) are clear. Let $V_+(W')$ and $V_-(W')$ be the set of vertices designated as $+$ and $-$, respectively. Then every edge in $E(W')$ is $\overrightarrow{xy}$ for some $x\in V_+(W')$ and $y\in V_-(W')$.
Suppose that $W'$ contains a triangle $xyz$ and $xy$ has direction $\overrightarrow{xy}$. Then $y$ has only inward edges, so $yz$ has direction $\overrightarrow{zy}$. This means that $x,z\in V_+(W')$, so there cannot be a directed edge between $x$ and $z$, which is a contradiction. Then $W'$ and thus $H$ does not contain a triangle which implies (4).

Since $W'$ has no self-loops and both-sided edges, the number of edges and vertices of $W'$ and $H$ coincide along with their undirected structure. Thus it suffices to prove property (5) for $W'$ and its components $W_1',\ldots ,W_m'$ corresponding to $H_1,\ldots ,H_m$, respectively. We first consider the case when $W'$ is connected.
If $W$ has no both-sided edges, then the property is clear from $|E(W')|=|E(\tilde{W})|+S(W)$ and $|V(W')|\geq |V(W)|=|V(\tilde{W})|$. Suppose there exists a both-sided edge in $W$, and consider the subgraph $D$ of $\tilde{W}$ which corresponds to both-sided edges of $W$. Note that since any vertex $d\in V(D)$ is attached to a both-sided edge, it readily splits into $d_+$ and $d_-$ in $W'$. We will add edges to $W$ until $D$ grows up to $\tilde{W}$ while not decreasing $|E(W')|-|V(W')|$ for the corresponding $W'$. Let $d$ be a vertex in $V(D)$ and $v$ be adjacent to $d$ in $W$ but $\overline{dv}\notin E(\tilde{W})$. Then there is a one-sided edge between $d$ and $v$, and adding the other side increases $|E(W')|$ by $1$. If $v\in V(D)$, both $v$ and $d$ splits into two points in $W'$ before adding the edge, so $|V(W')|$ is unchanged. If $v\notin V(D)$, only $v$ newly splits into two in $W'$, so $|V(W')|$ increases by $1$. Applying this process inductively for all adjacent edges, $D$ grows up to $\tilde{W}$ while not increasing $|E(W')|-|V(W')|$. When $D=\tilde{W}$, all edges in $W$ are both-sided and all vertices are split into two in $W'$, so $|E(W')|=2|E(\tilde{W})|+S(W)$ and $|V(W')|=2|V(W')|$. This completes the proof for connected $W'$. Generally, a connected component $W_i'$ of $W'$ corresponds to a connected subgraph $G$ of $W$ such that $G'=W_i'$. Then $\tilde{G}$ is a connected subgraph of $\tilde{W}$ and Lemma \ref{e-v lemma} below implies $|E(\tilde{G})|-|V(\tilde{G})|\leq |E(\tilde{W})|-|V(\tilde{W})|$. Thus
\begin{align*}
|E(W_i')|-|V(W_i')| \leq 2(|E(\tilde{G})|-|V(\tilde{G})|)+S(G)\leq 2(|E(\tilde{W})|-|V(\tilde{W})|)+S(W).
\end{align*}

Since $H_i$ are vertex disjoint, the second equality of (6) is clear. For the first equality, let $W=(w_{ij})_{1\leq i,j\leq n}$. We will view $w_{ij}$ as the weight of both $\overrightarrow{ij}$ in $W$ and $\overrightarrow{i_+j_-}$ in $W'$. Since the weights are nonnegative and $w_{ij}$ is nonzero only for $j\in V_-(W')$,
\allowdisplaybreaks
\begin{align*}
\| W\|^2 & =\sup_{f\in [0,\infty )^n, \| f\|_2 =1}\sum_{i=1}^{n}\left( \sum_{j=1}^{n}w_{ij}f_j\right)^2 =\sup_{f\in [0,\infty )^n, \| f\|_2 =1}\sum_{1\leq j,k\leq n}\left( \sum_{i=1}^{n}w_{ij}w_{ik}\right) f_j f_k\\
& =\sup_{f\in [0,\infty )^n, \| f\|_2 =1}\sum_{j,k\in V_-(W')}\left( \sum_{i\in V_+(W')}w_{ij}w_{ik}\right) f_j f_k\\
& = \sup_{\tilde{f}\in [0,\infty )^{|V(W')|}, \| \tilde{f}\|_2 =1}\sum_{j,k\in V(W')}\left( \sum_{i\in V(W')}w_{ij}w_{ik}\right) \tilde{f}_j\tilde{f}_k  =\| W'\|^2.
\end{align*}
Here $f$ is indexed by $[n]$, and $\tilde{f}$ by the vertex set $V(W')$ itself.
Since $W'$ has no self-loops, the matrix inequality $W'\leq H$ holds entrywise and thus $\| W\| =\| W'\| \leq \| H\|$.
\end{proof}

Now we prove the fact that the value of $|E|-|V|$ for subgraphs of connected graphs does not exceed that of the original graph which was used in the main proof above.

\begin{lemma} \label{e-v lemma}
Suppose $K$ is a subgraph of a connected undirected graph $G$.
Then $|E(K)|-|V(K)|\leq |E(G)|-|V(G)|$.
\end{lemma}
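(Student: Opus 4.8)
The plan is to reduce to the case where $K$ is a spanning connected subgraph of $G$ and then argue by adding back the missing edges one at a time. First I would observe that the quantity $|E|-|V|$ is insensitive to deleting isolated vertices, so without loss of generality every vertex of $K$ has positive degree in $K$; this does not change $|E(K)|-|V(K)|$. Next, since $G$ is connected, I would enlarge $K$ to a subgraph $K'$ with $V(K') = V(G)$ by adding, for each vertex $v \in V(G) \setminus V(K)$, a single edge of $G$ joining $v$ to the already-present part — concretely, process the vertices of $V(G)\setminus V(K)$ in an order given by a breadth-first search from $V(K)$ in $G$, so that each such vertex, when it is added, has at least one $G$-neighbour already in the current vertex set, and add one such edge. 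Each of these $|V(G)| - |V(K)|$ steps increases $|E|$ by exactly $1$ and $|V|$ by exactly $1$, so $|E(K')| - |V(K')| = |E(K)| - |V(K)|$, and now $V(K') = V(G)$, with $K' \subseteq G$.

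It remains to compare $K'$ with $G$ when they share the same vertex set. Here $E(K') \subseteq E(G)$, so $|E(K')| \leq |E(G)|$, and since $|V(K')| = |V(G)|$ we get immediately
\begin{align*}
|E(K')| - |V(K')| \leq |E(G)| - |V(G)|.
\end{align*}
Chaining this with the previous equality gives $|E(K)| - |V(K)| \leq |E(G)| - |V(G)|$, as desired.

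There is no real obstacle here; the only point requiring a little care is the first reduction step — making sure that the vertices of $V(G)\setminus V(K)$ can be attached one at a time using edges of $G$, which is exactly where connectedness of $G$ is used (via the BFS ordering). An alternative phrasing avoiding BFS: take a spanning tree $T$ of $G$; for each component $C$ of $K$ pick a representative vertex, and for each vertex $v \notin V(K)$ the unique $T$-path from $v$ towards $V(K)$ contains a last edge leaving $V(K') $ as we build up, which supplies the required edge. Either way the bookkeeping is routine, and the deletion-of-isolated-vertices remark at the start is only for tidiness and can be skipped if one allows $K$ to have isolated vertices (it only helps the inequality).
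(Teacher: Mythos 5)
Your proof is correct and takes a genuinely different, more elementary route than the paper's. The paper uses a rank-nullity argument: it introduces the nullity $N[H]=|E(H)|-|V(H)|+p(H)$ (where $p(H)$ is the number of components), invokes a theorem of Brown (1966) together with the spanning-tree bound $N[K_s]\geq 0$ to conclude $N[G]\geq N[K]$, and then uses $p(K)\geq 1=p(G)$ to pass back to $|E|-|V|$. Your argument avoids the external citation entirely: you grow $K$ to a spanning subgraph $K'\subseteq G$ by adding exactly one edge per new vertex via a BFS from $V(K)$, which keeps $|E|-|V|$ fixed, and then you compare edge counts on the common vertex set $V(K')=V(G)$. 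Both proofs use connectedness of $G$ at the same conceptual spot (for the paper, $p(G)=1$; for you, the BFS from $V(K)$ reaches every vertex), but yours is more hands-on and self-contained, whereas the paper's is shorter if one is willing to quote the nullity decomposition.

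One small slip in your write-up: you assert that $|E|-|V|$ is ``insensitive to deleting isolated vertices'' and that doing so ``does not change $|E(K)|-|V(K)|$.'' In fact deleting an isolated vertex leaves $|E|$ unchanged and decreases $|V|$ by one, so $|E|-|V|$ strictly increases. You correctly note at the end that this reduction is optional and only helps the inequality, and indeed the BFS construction works just as well with isolated vertices of $K$ present, so the logical structure is unaffected --- but the phrase ``does not change'' should be corrected or simply removed. (Both your proof and the paper's implicitly assume $K$ is nonempty; for the empty subgraph of a tree the inequality fails, though that degenerate case never arises in the application.)
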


\begin{proof}
For a graph $G$ with $p=p(G)$ components, the nullity is defined as $N[G]=(|E(G)|-|V(G)|)+p(G)$. Let $K_s$ be the graph consisting of edges in $G\setminus K$ and the corresponding vertices. \cite[Theorem 1]{b66} states that $N[G]=N[K]+N[K_s]$. For a spanning tree $T$ of $K_s$, 
\begin{align*}
N[K_s]=|E(K_s)|-|V(K_s)|+p(K_s)\geq |E(T)|-|V(T)|+1=0,
\end{align*}
and thus $N[G]\geq N[K]$. Since $G$ is connected, $p(G)=1 \leq p(K)$ and the lemma follows.
\end{proof}

Finally we calculate the norm of weighted directed stars for later use.

\begin{lemma} \label{directed star}
Suppose that $S$ is a weighted directed star graph with adjacency matrix
\begin{align*}
A =\begin{pmatrix}
0 & a_1 & \cdots & a_{n-1} \\
b_1 &  &  &  \\
\vdots &  &  & \\
b_{m-1} & & &
\end{pmatrix}.
\end{align*}
Then
\begin{align*}
\| A\| =\left( \sum_{i=1}^{n-1}a_i^2\vee \sum_{i=1}^{m-1}b_i^2 \right)^{\frac{1}{2}}.
\end{align*}
\end{lemma}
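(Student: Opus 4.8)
The plan is to read the operator norm straight off the variational definition by splitting a unit vector into the coordinate attached to the central vertex of the star and the remaining coordinates. Write $a=(a_1,\dots,a_{n-1})$ and $b=(b_1,\dots,b_{m-1})$, and for $v\in\mathbb{R}^n$ with $\|v\|_2=1$ decompose $v=(v_1,v')$ with $v_1\in\mathbb{R}$ and $v'\in\mathbb{R}^{n-1}$. From the displayed form of $A$ one has $Av=\bigl(\langle a,v'\rangle,\ v_1 b\bigr)$, so that $\|Av\|_2^2=\langle a,v'\rangle^2+v_1^2\|b\|_2^2$.

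For the upper bound, apply Cauchy--Schwarz to the first term and use $\|v'\|_2^2+v_1^2=1$ to get
\begin{align*}
\|Av\|_2^2\le \|a\|_2^2\,\|v'\|_2^2+\|b\|_2^2\, v_1^2\le \bigl(\|a\|_2^2\vee\|b\|_2^2\bigr)\bigl(\|v'\|_2^2+v_1^2\bigr)=\|a\|_2^2\vee\|b\|_2^2,
\end{align*}
which gives $\|A\|\le\bigl(\sum_{i=1}^{n-1}a_i^2\vee\sum_{i=1}^{m-1}b_i^2\bigr)^{1/2}$. For the matching lower bound, if $\|a\|_2\ge\|b\|_2$ take $v_1=0$ and $v'=a/\|a\|_2$ (or any unit vector $v'$ when $a=0$), so that $\|Av\|_2^2=\|a\|_2^2$; if $\|b\|_2>\|a\|_2$ take $v_1=1$ and $v'=0$, so that $\|Av\|_2^2=\|b\|_2^2$. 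In either case $\|A\|^2\ge\|a\|_2^2\vee\|b\|_2^2$, which completes the proof. Equivalently, one may observe that $AA^{\intercal}$ is block diagonal with blocks $\|a\|_2^2$ and $bb^{\intercal}$, whose nonzero eigenvalues are exactly $\|a\|_2^2$ and $\|b\|_2^2$.

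There is essentially no obstacle here: the argument is a one-line computation. The only points worth tracking are that the identity $\|v'\|_2^2+v_1^2=1$ is precisely what allows the two individual bounds to be merged into a single maximum, and that the extremal vectors exhibited above are genuine unit vectors, with the degenerate cases $a=0$ or $b=0$ handled directly.
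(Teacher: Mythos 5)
Your argument is correct and matches the paper's: both compute $\|Av\|_2^2 = \langle a,v'\rangle^2 + v_1^2\|b\|_2^2$, bound the first term by Cauchy--Schwarz, combine the two terms using $\|v'\|_2^2+v_1^2=1$, and exhibit the extremal vectors. Your closing remark that $AA^{\intercal}$ is block diagonal with blocks $\|a\|_2^2$ and $bb^{\intercal}$ is a clean one-line alternative that the paper does not mention.
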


\begin{proof}
For a vector $v=\begin{pmatrix}
v_0 & \cdots & v_{n-1}
\end{pmatrix}^{\intercal}$ with $\| v\|_2 =1$,
\begin{align*}
\| Av\|_2^2 & = \left( \sum_{i=1}^{n-1}a_i v_i\right)^2 +\sum_{i=1}^{m-1}b_i^2 v_0^2 \leq \sum_{i=1}^{n-1}a_i^2\sum_{i=1}^{n-1}v_i^2 +v_0^2\sum_{i=1}^{m-1}b_i^2 =(1-v_0^2)\sum_{i=1}^{n-1}a_i^2 +v_0^2 \sum_{i=1}^{m-1}b_i^2\\
& \leq \sum_{i=1}^{n-1}a_i^2\vee \sum_{i=1}^{m-1}b_i^2 .
\end{align*}
Equality holds when $v_0=0$ and $v_1,\ldots ,v_{n-1}$ satisfies the equality conditions for the Cauchy-Schwarz inequality, or when $v_0=1$, depending on the values of $a_i$ and $b_i$.
\end{proof}

\section{Light-tailed weights}

Here we consider the light-tailed case $\alpha >2$. Recall that 
\begin{align*}
\lambda_{\alpha}^{\mathrm{light}} := 2^{\frac{1}{\alpha}}\alpha^{-\frac{1}{2}}(\alpha -2)^{\frac{1}{2}-\frac{1}{\alpha}}\frac{(\log n)^{\frac{1}{2}}}{(\log \log n)^{\frac{1}{2}-\frac{1}{\alpha}}} .
\end{align*}
We will simply write $\lambda_{\alpha}^{\mathrm{light}}$ as $\lambda_{\alpha}$ throughout this section and let
\begin{align}
B_{\alpha} := 2^{\frac{1}{\alpha}}\alpha^{-\frac{1}{2}}(\alpha -2)^{\frac{1}{2}-\frac{1}{\alpha}}.
\end{align}

\subsection{Upper tail estimate}

We first recall the upper tail estimate that will be proven in this section:

\ltut*

Define
\begin{align} \label{gamma delta}
\gamma_{\delta} := (1+\delta )^2 \left( 1-\frac{2}{\alpha}\right) .
\end{align}
The governing structure for the upper tail of $\| Z\|$ will be a directed star of degree $\lceil \gamma_{\delta}\frac{\log n}{\log \log n}\rceil$ with large edge-weights. Note that an undirected star with the same properties played the same role for the largest eigenvalue of undirected networks in \cite{ghn22}.

\subsubsection{Lower bound for the upper tail} \label{lower bound for the upper tail}

The proof generally follows the corresponding undirected case in \cite{ghn22}, but is simplified by replacing stars by column vectors.
For the lower bound, instead of considering a directed star, it suffices to consider a column vector with $\lceil \gamma_{\delta}\frac{\log n}{\log \log n}\rceil$ nonzero coordinates. We split the proof into two cases depending on the value of $\gamma_{\delta}$.

\emph{Case 1: ${\gamma_{\delta}<1}$.} For $0<\gamma ,\rho <1$, let $\mathcal{A}_{\gamma ,\rho}$ be the event where $X$ has $m=\lceil n^{1-\gamma -\rho }\rceil$ columns each with $\lceil \gamma \frac{\log n}{\log \log n}\rceil$ nonzero coordinates.
By Lemma \ref{binomial loglog estimate}, the probability that a single column has at least $\lceil \gamma_{\delta} \frac{\log n}{\log \log n}\rceil$ nonzero coordinates is bounded by
\begin{align} \label{single column}
\mathbb{P} \left( \mathrm{Binom}\left( n,\frac{d}{n}\right) \geq \gamma_{\delta}\frac{\log n}{\log \log n}\right) \leq n^{-\gamma_{\delta}+w}
\end{align}
for some $w=o(1)$. Using this,
\begin{align*}
\mathbb{P} (\mathcal{A}_{\gamma_{\delta},\rho}) & \geq \mathbb{P}\left( \mathrm{Binom}\left( n, n^{-\gamma_{\delta}+w}\right) \geq n^{1-\gamma_{\delta}-\rho} \right) \geq 1-e^{-nI_{n^{-\gamma_{\delta}+w}}(n^{-\gamma_{\delta} -\rho})}.
\end{align*}
Since $n^{-\gamma_{\delta}-\rho} \leq \frac{1}{2}n^{-\gamma_{\delta}+w}$ for large $n$, by Lemma \ref{entropy lower bound},
\begin{align*}
I_{n^{-\gamma_{\delta}+w}}(n^{-\gamma_{\delta}-\rho}) \geq cn^{-\gamma_{\delta }+w}
\end{align*}
holds for some constant $c$. Then
\begin{align} \label{32}
\mathbb{P}(\mathcal{A}_{\gamma_{\delta},\rho}) & \geq 1-e^{-n^{1-\gamma_{\delta}+o(1)}}\geq \frac{1}{2}
\end{align}
for large $n$. Let $X_{\cdot \sigma (1)},\ldots ,X_{\cdot \sigma (m)}$ be the columns with at least $\lceil \gamma \frac{\log n}{\log \log n}\rceil$ nonzero coordinates. Note that if $Z_a$ is a submatrix of $Z$, then $\| Z \| \geq \| Z_a \|$ by Lemma \ref{submatrix}, and the norm of a column vector $V=\begin{pmatrix}
v_1 & \cdots & v_n
\end{pmatrix}^{\intercal}$ is $\left( \sum_{i=1}^{n}v_i^2\right)^{\frac{1}{2}}$. Conditioned on the event $\mathcal{A}_{\gamma_{\delta},\rho}$, by the tail estimates for Weibull distributions in \eqref{151},
\begin{align*}
\mathbb{P}\left( \sum_{i=1}^{n}Z_{i\sigma (k)}^2 \geq (1+\delta )^2 \lambda_{\alpha}^2 \, | \, X\right) \geq n^{-(1+\delta )^2 \frac{2}{\alpha}+o(1)}.
\end{align*}
Therefore
\begin{align*}
\mathbb{P}\left( \max_{k=1,\ldots ,m}\sum_{i=1}^{n}Z_{i\sigma (k)}^2 \geq (1+\delta )^2 \lambda_{\alpha}^2 \, | \, X\right) & \geq  1-\left( 1-n^{-(1+\delta )^2 \frac{2}{\alpha}+o(1)}\right)^m \\
& \geq  1-e^{-n^{1-\gamma_{\delta}-\rho -(1+\delta )^2\frac{2}{\alpha}+o(1)}}  \geq \frac{1}{2}n^{1-(1+\delta )^2 -\rho +o(1)}
\end{align*}
and
\begin{align} \label{ltutlb1}
\mathbb{P} \left( \| Z\| \geq (1+\delta )\lambda_{\alpha}\right) & \geq \mathbb{E}\left[ \mathbb{P}\left( \| Z\| \geq (1+\delta )\lambda_{\alpha} \, | \, X\right) \mathds{1}_{\mathcal{A}_{\gamma_{\delta},\rho}}\right] \nonumber \\
& \geq \mathbb{P}(\mathds{1}_{\mathcal{A}_{\gamma_{\delta},\rho}}) \mathbb{P}\left( \max_{k=1,\ldots ,m}\sum_{i=1}^{n}Z_{i\sigma (k)}^2 \geq (1+\delta )^2 \lambda_{\alpha}^2 \, | \, X\right) \geq \frac{1}{4}n^{1-(1+\delta )^2 -\rho +o(1)}.
\end{align}

\emph{Case 2: $\gamma_{\delta}\geq 1$.} For $\rho >0$, let $\mathcal{A}_{\gamma}'$ be the event where $X$ has a column with at least $\lceil \gamma \frac{\log n}{\log \log n}\rceil$ nonzero coordinates. Condition on the event $\mathcal{A}_{(1+\rho )\gamma_{\delta}}'$ and let $X_{\cdot k}$ be the corresponding column. Again by the tail estimate \eqref{151},
\begin{align*}
\mathbb{P} \left( \| Z\| \geq (1+\delta )\lambda_{\alpha} \, | \, X\right) & \geq \mathbb{P}\left( \sum_{i=1}^{n}Z_{ik}^2 \geq (1+\delta )^2 \lambda_{\alpha}^2 \, | \, X\right) \\
& \geq n^{-(1+\delta )^{\alpha}\frac{2}{\alpha -2}(1-\frac{2}{\alpha})^{\frac{\alpha}{2}}((1+\rho )\gamma_{\delta})^{1-\frac{\alpha}{2}}+o(1)} =n^{-(1+\delta )^2 \frac{2}{\alpha}(1+\rho )^{1-\frac{2}{\alpha}}+o(1)}.
\end{align*}
From \eqref{single column},
\begin{align*}
\mathbb{P}(\mathcal{A}_{(1+\rho )\gamma_{\delta}}') =1-\left( 1-n^{-(1+\rho )\gamma_{\delta}+o(1)}\right)^n=n^{1-(1+\rho )\gamma_{\delta}+o(1)}.
\end{align*}
Thus
\begin{align} \label{ltutlb2}
\mathbb{P}(\| Z\| \geq (1+\delta )\lambda_{\alpha}) \geq n^{1-(1+\rho )\gamma_{\delta}-(1+\delta )^2 \frac{2}{\alpha}(1+\rho )^{1-\frac{2}{\alpha}}+o(1)}.
\end{align}
The theorem follows by letting $\rho \to 0$ in \eqref{ltutlb1} and \eqref{ltutlb2}.

\subsubsection{Upper bound for the upper tail}

We begin with a large deviation principle for the underlying Erd\H{o}s-R\'enyi digraph $X$.
For simplicity, denote $t_n=\frac{\log n}{\log \log n}$. First we restate a large deviation principle for the largest eigenvalue of an undirected Erd\H{o}s-R\'enyi graph:

\begin{lemma}[{\cite[Theorem 1.1]{bbg21}}] \label{graph eigenvalue}
Let $P$ be the adjacency matrix of the Erd\H{o}s-R\'enyi graph $\mathcal{G}(n,\frac{d}{n})$ where $d>0$ is a fixed constant.
Then for any $\delta >0$,
\begin{align*}
\lim_{n\to\infty}-\frac{\log \mathbb{P}\left( \lambda_1 (P)\geq (1+\delta )t_n^{\frac{1}{2}}\right)}{\log n}=(1+\delta )^2-1 .
\end{align*}
\end{lemma}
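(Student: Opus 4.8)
The statement is a restatement of \cite[Theorem 1.1]{bbg21}; the plan is to recover its two matching inequalities. The guiding heuristic is that for $\mathcal G(n,\tfrac dn)$ one has $\lambda_1(P)=(1+o(1))\sqrt{d_1(P)}$ with overwhelming probability, so that an upward deviation of $\lambda_1$ is, to leading order, an upward deviation of the maximum degree $d_1(P)$; since Lemma \ref{binomial loglog estimate} and the identity $t_n\log t_n=(1+o(1))\log n$ give $\mathbb P(\mathrm{Binom}(n,\tfrac dn)\ge\gamma t_n)=n^{-\gamma+o(1)}$, a union bound over the $n$ vertices produces precisely the rate $(1+\delta)^2-1$. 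I treat the lower and upper bounds on $\mathbb P(\lambda_1(P)\ge(1+\delta)t_n^{1/2})$ separately.

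\emph{Lower bound on the probability (easy direction).} Partition $[n]=A\sqcup B$ with $|A|=|B|=\lfloor n/2\rfloor$, and for $v\in A$ let $D(v)$ count the neighbours of $v$ lying in $B$; the variables $\{D(v)\}_{v\in A}$ are i.i.d.\ $\mathrm{Binom}(\lfloor n/2\rfloor,\tfrac dn)$ and $\deg(v)\ge D(v)$. Fix $\delta'>\delta$. If some $v\in A$ has $D(v)\ge(1+\delta')^2t_n$, then $\mathcal G$ contains the star on $\{v\}\cup(N(v)\cap B)$ as a subgraph, so by monotonicity of $\lambda_1$ under edge addition and $\lambda_1(K_{1,k})=\sqrt k$ (cf.\ Lemma \ref{directed star}) we get $\lambda_1(P)\ge(1+\delta')t_n^{1/2}>(1+\delta)t_n^{1/2}$. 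By Lemma \ref{binomial loglog estimate}, $\mathbb P(D(v)\ge(1+\delta')^2t_n)\ge n^{-(1+\delta')^2+o(1)}$; since the $D(v)$ are independent and $(1+\delta')^2>1$,
\[
\mathbb P\big(\lambda_1(P)\ge(1+\delta)t_n^{1/2}\big)\ \ge\ 1-\big(1-n^{-(1+\delta')^2+o(1)}\big)^{\lfloor n/2\rfloor}\ =\ n^{1-(1+\delta')^2+o(1)} .
\]
Letting $\delta'\downarrow\delta$ yields $\liminf_n\big(-\log\mathbb P(\,\cdot\,)/\log n\big)\le(1+\delta)^2-1$.

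\emph{Upper bound on the probability (main direction).} Here one exploits the structure of $\mathcal G(n,\tfrac dn)$: it is sparse, locally tree-like, has a giant component whose local neighbourhoods are nearly acyclic, and its vertices of atypically large degree are few and mutually far apart. Following \cite{bbg21}, one conditions on a structural event $\mathcal S$ (combining a maximum-degree bound, excess-edge bounds in the spirit of Lemmas \ref{d}--\ref{e} adapted to density $\tfrac dn$, and a bound excluding dense clusters of high-degree vertices) whose complement has probability $o\big(n^{1-(1+\delta)^2}\big)$. On $\mathcal S$ a deterministic spectral estimate gives $\lambda_1(P)\le(1+o(1))\sqrt{d_1(P)}$: the mechanism is that in a nearly-tree graph a vertex of large degree $\Delta$ whose neighbourhood is otherwise low-degree contributes only $\sqrt\Delta(1+o(1))$ to the top eigenvalue, and a bounded-size cluster of high-degree vertices, or a path of them, contributes no more — whereas a genuinely many-layered high-degree (near-regular) structure, which could push the eigenvalue above $\sqrt{d_1}$, has probability far smaller than $n^{1-(1+\delta)^2}$ of occurring. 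Consequently, for any fixed $\varepsilon>0$ and all large $n$, $\{\lambda_1(P)\ge(1+\delta)t_n^{1/2}\}\cap\mathcal S$ forces $d_1(P)\ge(1-\varepsilon)(1+\delta)^2t_n$, an event of probability at most
\[
n\cdot\mathbb P\big(\mathrm{Binom}(n,\tfrac dn)\ge(1-\varepsilon)(1+\delta)^2t_n\big)=n^{1-(1-\varepsilon)(1+\delta)^2+o(1)}
\]
by Lemma \ref{binomial loglog estimate}. Adding $\mathbb P(\mathcal S^c)$ and letting $\varepsilon\downarrow0$ gives $\limsup_n\big(-\log\mathbb P(\,\cdot\,)/\log n\big)\ge(1+\delta)^2-1$, and together with the lower bound this proves the claimed limit.

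\emph{Main obstacle.} The delicate point is the deterministic spectral inequality $\lambda_1(P)\le(1+o(1))\sqrt{d_1(P)}$ valid on the good event $\mathcal S$ — improving the trivial $\lambda_1\le d_1$ to the correct power with the sharp constant $1$, not merely the right order — together with the accompanying structural bookkeeping showing that \emph{every} auxiliary event one conditions away (large maximum degree beyond the threshold, excess edges, clustered or layered high-degree vertices) is strictly more costly in the exponent than the target rate $(1+\delta)^2-1$. Everything else — the star lower bound and the binomial tail asymptotics of Lemma \ref{binomial loglog estimate} — is routine.
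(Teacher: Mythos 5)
The paper does not prove this lemma; it simply imports \cite[Theorem 1.1]{bbg21} verbatim and uses it as a black box. There is therefore no ``paper's own proof'' to compare against --- the question is only whether your reconstruction is a faithful and complete account of the cited result.

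Your sketch gets the architecture right and matches how \cite{bbg21} (building on Krivelevich--Sudakov \cite{ks03}) actually proceeds: a star lower bound with an independence trick to avoid degree correlations, and an upper bound that conditions on a structural event and reduces $\lambda_1$ to $d_1$. The lower bound is essentially complete (modulo substituting the correct two-sided binomial tail estimate --- the paper's Lemma~\ref{binomial loglog estimate} as stated only applies to $\mathrm{Binom}(at_n,\tfrac dn)$ and only gives an upper bound; what you need is the analogue of \eqref{single column} together with Lemma~\ref{entropy} for a matching lower bound on $\mathbb P(\mathrm{Binom}(n,\tfrac dn)\ge\gamma t_n)$). However, the upper bound has a genuine gap, which you yourself flag: the deterministic estimate $\lambda_1(P)\le(1+o(1))\sqrt{d_1(P)}$ on the good event $\mathcal S$ is asserted, not proved, and so is the claim that $\mathbb P(\mathcal S^c)=o(n^{1-(1+\delta)^2})$ for a suitable choice of $\mathcal S$. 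This is not a routine reduction: getting the constant $1$ rather than merely the right order, and showing that every pathology one excises (paths or clusters of high-degree vertices, excess cycles near a hub, near-regular high-degree subgraphs) is strictly more expensive in the exponent than $(1+\delta)^2-1$, constitutes essentially all of the technical content of \cite[Theorem 1.1]{bbg21}. As written, your ``proof'' reduces the lemma to a statement that is itself the hard theorem, so it does not stand on its own; it is a correct roadmap for the citation, not a substitute for it.
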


A similar bound for the norm of $X\sim \mathcal{G}_d(n,\frac{d}{n})$ can be derived using the undirected case:

\begin{lemma} \label{graph ldp}
For any $\delta >0$,
\begin{align*}
\lim_{n\to\infty}-\frac{\log \mathbb{P}\left( \| X\| \geq (1+\delta )t_n^{\frac{1}{2}}\right)}{\log n}\geq (1+\delta )^2-1 .
\end{align*}
\end{lemma}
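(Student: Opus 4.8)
The plan is to reduce the problem to the undirected Erdős–Rényi graph via symmetrization and then quote Lemma~\ref{graph eigenvalue}. The symmetrization $\tilde{X}$ (Definition~\ref{symmetrization}) of the Bernoulli matrix $X$ is exactly the adjacency matrix of an undirected Erdős–Rényi graph on $[n]$ in which the edge $\overline{ij}$ is present iff at least one of $\overrightarrow{ij}$, $\overrightarrow{ji}$ appears in $X$; hence it has density $q = 1-(1-\tfrac{d}{n})^2 \leq \tfrac{2d}{n}$. Since $q \leq 2d/n$, I would couple $\tilde{X}$ so that it is, edgewise, a subgraph of $P \sim \mathcal{G}(n,\tfrac{2d}{n})$; this sidesteps the minor nuisance that $q$ is not literally of the form (constant)$/n$.

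The spectral comparison rests on the entrywise monotonicity of the operator norm for nonnegative matrices: if $0\leq A\leq B$ entrywise then $\|A\|\leq \|B\|$, because in $\|A\| = \sup_{\|u\|_2=\|v\|_2=1} u^{\intercal}Av$ the maximizing $u,v$ may be taken coordinatewise nonnegative (replacing $u_i, v_j$ by $|u_i|,|v_j|$ only increases $\sum_{ij}u_iA_{ij}v_j$ when $A_{ij}\geq 0$). The one place $X$ fails to be dominated by $\tilde{X}$ is the diagonal, where $X$ (allowing self-loops) may be $1$ while $\tilde{X}_{ii}=0$; but $X \leq \tilde{X}+I$ entrywise with all three matrices nonnegative, so $\|X\|\leq \|\tilde{X}+I\|\leq \|\tilde{X}\|+1 \leq \|P\|+1$. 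Finally, $P$ is symmetric and nonnegative, so $\|P\|=\lambda_1(P)$: indeed $|\lambda_n(P)| = |\langle w,Pw\rangle| \leq \langle |w|,P|w|\rangle \leq \lambda_1(P)$ for a bottom eigenvector $w$, whence $\|P\| = |\lambda_1(P)|\vee|\lambda_n(P)| = \lambda_1(P)$.

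Combining these, fix $0<\delta'<\delta$; since $t_n\to\infty$ we have $(1+\delta)t_n^{1/2}-1 \geq (1+\delta')t_n^{1/2}$ for all large $n$, so
\[
\mathbb{P}\big(\|X\|\geq (1+\delta)t_n^{1/2}\big) \leq \mathbb{P}\big(\|P\|+1 \geq (1+\delta)t_n^{1/2}\big) \leq \mathbb{P}\big(\lambda_1(P)\geq (1+\delta')t_n^{1/2}\big).
\]
Applying Lemma~\ref{graph eigenvalue} with $d$ replaced by $2d$ gives $\liminf_{n\to\infty} -\frac{\log \mathbb{P}(\|X\|\geq (1+\delta)t_n^{1/2})}{\log n} \geq (1+\delta')^2-1$, and letting $\delta'\uparrow\delta$ yields the claim.

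The only genuinely delicate point is the treatment of self-loops, which break the clean entrywise domination $X\leq\tilde X$; bounding the diagonal by $I$ (spectrally negligible against $t_n^{1/2}\to\infty$) repairs this, and everything else is routine bookkeeping (the monotone coupling into $\mathcal{G}(n,\tfrac{2d}{n})$ and the identification $\|P\|=\lambda_1(P)$ for symmetric nonnegative $P$). Note also that only the inequality "$\geq$" is asserted here, so no matching upper bound on the probability is needed.
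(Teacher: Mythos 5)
Your proof is correct and follows essentially the same route as the paper's: symmetrize $X$ to $\tilde{X}$, observe the entrywise bound $X\leq \tilde{X}+I_n$, stochastically dominate $\tilde{X}$ by $\mathcal{G}(n,\tfrac{2d}{n})$, identify the operator norm with $\lambda_1$ for a symmetric nonnegative matrix, and invoke Lemma~\ref{graph eigenvalue}. The paper's proof is terser (it does not spell out the entrywise norm monotonicity, the $\|\tilde X\|=\lambda_1(\tilde X)$ identification, or the $\delta'\uparrow\delta$ step that absorbs the additive $-1$), but these are exactly the points your write-up makes explicit, and they are all correct.
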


\begin{proof}
Consider the symmetric matrix $\tilde{X}$ defined as in Definition \ref{symmetrization}. Then $\tilde{X}_{ij}$ for $i>j$ are i.i.d with distribution $\mathrm{Bernoulli}(2p-p^2)$ and $X\leq \tilde{X}+I_n$ where the inequality holds entrywise and $I_n$ is the $n\times n$ identity matrix. Since $\| I_n\| =1$, this implies
\begin{align*}
\mathbb{P}\left( \| X\| \geq (1+\delta )t_n^{\frac{1}{2}}\right) \leq \mathbb{P}\left( \| \tilde{X}\| \geq (1+\delta )t_n^{\frac{1}{2}}-1\right) =\mathbb{P} \left( \lambda_1 (\tilde{X}) \geq (1+\delta )t_n^{\frac{1}{2}}-1\right) .
\end{align*}
Since $\tilde{X}$ is the adjacency matrix of $\mathcal{G}(n,2p-p^2)$ which is stochastically dominated by $\mathcal{G}(n,2p)$, applying Lemma \ref{graph eigenvalue} gives the desired result.
\end{proof}

Now consider the decomposition 
\begin{align*}
Y_{ij}=Y_{ij}^{(1)}+Y_{ij}^{(2)}
\end{align*}
where $Y_{ij}^{(1)}=Y_{ij}\mathds{1}_{|Y_{ij}|>(\varepsilon \log \log n)^{\frac{1}{\alpha}}}$ and $Y_{ij}^{(2)}=Y_{ij}\mathds{1}_{|Y_{ij}|\leq (\varepsilon \log \log n)^{\frac{1}{\alpha}}}$. Also write $Z=Z^{(1)}+Z^{(2)}$ where $Z_{ij}^{(1)}=X_{ij}Y_{ij}^{(1)}$ and $Z_{ij}^{(2)}=X_{ij}Y_{ij}^{(2)}$. Since $|Z_{ij}^{(2)}|\leq X_{ij}(\varepsilon \log \log n)^{\frac{1}{\alpha}}$, Lemma \ref{graph ldp} implies the following:

\begin{lemma} \label{light tail negligible}
For any $\delta >0$,
\begin{align*}
\liminf_{n\to\infty} \frac{-\log \mathbb{P}\left( \| Z^{(2)}\| \geq \varepsilon^{\frac{1}{\alpha}}(1+\delta )\frac{\lambda_{\alpha}}{B_{\alpha}}\right)}{\log n} \geq (1+\delta )^2-1 .
\end{align*}
\end{lemma}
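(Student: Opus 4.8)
The plan is to reduce the claim about $\|Z^{(2)}\|$ to the large-deviation bound for $\|X\|$ already established in Lemma \ref{graph ldp}, exploiting the fact that the entries of $Z^{(2)}$ are uniformly small. First I would record the entrywise bound $|Z^{(2)}_{ij}| \le X_{ij}(\varepsilon \log\log n)^{1/\alpha}$, which holds by the definition of the truncation $Y^{(2)}_{ij} = Y_{ij}\mathds{1}_{|Y_{ij}| \le (\varepsilon \log\log n)^{1/\alpha}}$. Since the operator norm is monotone under entrywise domination of matrices with nonnegative entries, this gives $\|Z^{(2)}\| \le (\varepsilon \log\log n)^{1/\alpha}\,\|X\|$, where here I pass to $|Z^{(2)}|$ (entrywise absolute value) and note $\||Z^{(2)}|\| = \|Z^{(2)}\|$ is \emph{not} true in general, so more carefully I would bound $\|Z^{(2)}\| \le \||Z^{(2)}|\|$ and then $\||Z^{(2)}|\| \le (\varepsilon\log\log n)^{1/\alpha}\|X\|$ since $|Z^{(2)}|$ is dominated entrywise by $(\varepsilon\log\log n)^{1/\alpha}X$ and both have nonnegative entries.

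Next I would translate the event $\{\|Z^{(2)}\| \ge \varepsilon^{1/\alpha}(1+\delta)\lambda_\alpha/B_\alpha\}$ into an event for $\|X\|$. Recalling that $\lambda_\alpha = B_\alpha (\log n)^{1/2}/(\log\log n)^{1/2 - 1/\alpha}$, so $\lambda_\alpha/B_\alpha = (\log n)^{1/2}/(\log\log n)^{1/2-1/\alpha}$, dividing through by $(\varepsilon\log\log n)^{1/\alpha}$ yields
\begin{align*}
\varepsilon^{1/\alpha}(1+\delta)\frac{\lambda_\alpha}{B_\alpha}\cdot \frac{1}{(\varepsilon\log\log n)^{1/\alpha}} = (1+\delta)\frac{(\log n)^{1/2}}{(\log\log n)^{1/2-1/\alpha}(\log\log n)^{1/\alpha}} = (1+\delta)\frac{(\log n)^{1/2}}{(\log\log n)^{1/2}} = (1+\delta)t_n^{1/2}.
\end{align*}
Therefore $\{\|Z^{(2)}\| \ge \varepsilon^{1/\alpha}(1+\delta)\lambda_\alpha/B_\alpha\} \subseteq \{\|X\| \ge (1+\delta)t_n^{1/2}\}$, and hence
\begin{align*}
\mathbb{P}\left(\|Z^{(2)}\| \ge \varepsilon^{1/\alpha}(1+\delta)\frac{\lambda_\alpha}{B_\alpha}\right) \le \mathbb{P}\left(\|X\| \ge (1+\delta)t_n^{1/2}\right).
\end{align*}
Taking $-\log(\cdot)/\log n$ and $\liminf_{n\to\infty}$, Lemma \ref{graph ldp} gives the bound $(1+\delta)^2 - 1$, which is exactly the claim.

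The computation above is essentially the whole proof, so there is no real obstacle — the one point requiring a modicum of care is the monotonicity of the operator norm under entrywise domination, which is valid precisely because $X$ and $|Z^{(2)}|$ are entrywise nonnegative (for a nonnegative matrix $M$, $\|M\| = \sup_{\|v\|_2 = \|w\|_2 = 1} \langle w, Mv\rangle$ is attained at nonnegative $v,w$, and the bilinear form is monotone in the entries of $M$ over nonnegative test vectors). I would state this monotonicity explicitly as a one-line observation before applying it. Everything else is the algebraic simplification of the constants showing that the threshold for $\|X\|$ is exactly $(1+\delta)t_n^{1/2}$.
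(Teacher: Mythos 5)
Your proposal is correct and follows exactly the approach the paper intends: the paper's entire ``proof'' is the single observation that $|Z_{ij}^{(2)}|\leq X_{ij}(\varepsilon \log \log n)^{1/\alpha}$ followed by citing Lemma \ref{graph ldp}, and you have simply made the algebraic simplification of thresholds and the norm monotonicity (via $\|Z^{(2)}\|\le\||Z^{(2)}|\|\le(\varepsilon\log\log n)^{1/\alpha}\|X\|$ for entrywise nonnegative dominating matrix) explicit. The computation showing the threshold reduces to $(1+\delta)t_n^{1/2}$ is correct.
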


By the tail behavior of Weibull distributions, $X^{(1)}$ is distributed as $\mathcal{G}_d(n,q)$ where
\begin{align*}
q\leq \frac{d'}{n(\log n)^{\varepsilon}}
\end{align*}
for some constant $d'>0$. Then the symmetric matrix $\tilde{X}^{(1)}$ is distributed as $\mathcal{G}(n,2q-q^2)$, which is stochastically dominated by $\mathcal{G}(n,\frac{2d'}{n(\log n)^{\varepsilon}})$. Thus the structural lemmas from Section 3 can be applied to $\tilde{X}^{(1)}$. 

For further decomposition of $\tilde{X}^{(1)}$ we use the following lemma.

\begin{lemma}[{\cite[Lemma 5.4]{ghn22}}] \label{star decomposition}
There exists an event $\mathcal{W}$ measurable with respect to $\tilde{X}^{(1)}$ with probability at least $1-e^{-\omega (\log n)}$ under which $\tilde{X}^{(1)}$ can be decomposed into a graph $\tilde{X}_1^{(1)}$ which is a vertex-disjoint union of stars, and a graph $\tilde{X}_2^{(1)}$ whose maximum degree is $o\left( \frac{\log n}{\log \log n}\right)$.
\end{lemma}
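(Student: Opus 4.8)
The plan is to first pin down the coarse structure of $\tilde{X}^{(1)}$ with super‑polynomially small failure probability, and then strip off a star around every vertex of moderately large degree. Recall $\tilde{X}^{(1)}$ is stochastically dominated by $\mathcal{G}(n,q')$ with $q'\asymp\frac1{n(\log n)^\varepsilon}$, so $nq'\asymp(\log n)^{-\varepsilon}\to0$. Fix $m_n=\frac{\log n}{\sqrt{\log\log n}}$, $k_n=(\log\log n)^{1/4}$ and a degree threshold $\eta_n=(\log\log n)^{3/4}\log\log\log n$; note $\eta_n\to\infty$, $\eta_n=o(\log n/\log\log n)$ and $m_nk_n/\eta_n=o(\log n/\log\log n)$. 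Call a vertex \emph{heavy} if its degree in $\tilde{X}^{(1)}$ is at least $\eta_n$ and \emph{light} otherwise, and let $\mathcal{W}$ be the event that every connected component $C$ of $\tilde{X}^{(1)}$ satisfies $|V(C)|\le m_n$ and $|E(C)|-|V(C)|\le k_n$. A standard first-moment computation over labelled connected subgraphs (as in the proofs of Lemmas \ref{c} and \ref{e}) gives $\mathbb{P}(\mathcal{W}^c)\le e^{-\omega(\log n)}$: such a subgraph on $v$ vertices is counted by $\binom nv v^{v-2}(q')^{v-1}\le n(Cnq')^{v-1}$, which sums to $O(n)$ over $v$; taking $v=\lceil m_n\rceil$ gives $\mathbb{P}(\exists\,C:|V(C)|>m_n)\le\operatorname{poly}(n)\cdot(nq')^{m_n}=e^{-\omega(\log n)}$ since $m_n\log\frac1{nq'}\asymp m_n\log\log n=\omega(\log n)$, and restricting to $|V(C)|\le m_n$ makes each extra edge beyond a spanning tree cost a factor $\le m_n^2q'=n^{-1+o(1)}$, so $\mathbb{P}(\exists\,C:|V(C)|\le m_n,\ |E(C)|-|V(C)|>k_n)\le\operatorname{poly}(n)\cdot(m_n^2q')^{k_n}=e^{-\omega(\log n)}$ as $k_n\to\infty$.

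Now work on $\mathcal{W}$ and fix a component $C$ with heavy-vertex set $U_C$. Summing edge-endpoints, $\eta_n|U_C|\le\sum_{v\in U_C}\deg_C(v)\le2|E(C)|\le2(m_n+k_n)$, so $|U_C|\le2(m_n+k_n)/\eta_n$. If $v\ne v'$ in $U_C$ have $c$ common neighbours, the subgraph of $C$ on $\{v,v'\}$ together with those neighbours has $c+2$ vertices and at least $2c$ edges, so Lemma \ref{e-v lemma} and the excess bound force $c-2\le|E(C)|-|V(C)|\le k_n$, i.e. $c\le k_n+2$. Now define the decomposition: for each heavy vertex $v$ (lying in some component $C$) set
\begin{equation*}
A_v:=N(v)\setminus\Bigl(U_C\cup\bigcup_{u\in U_C\setminus\{v\}}N(u)\Bigr),
\end{equation*}
let $\tilde{X}_1^{(1)}$ be the union over all heavy $v$ of the star with centre $v$ and leaf set $A_v$, and let $\tilde{X}_2^{(1)}$ consist of the remaining edges of $\tilde{X}^{(1)}$; this is an edge-partition $\tilde{X}^{(1)}=\tilde{X}_1^{(1)}\cup\tilde{X}_2^{(1)}$.

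It remains to verify the two asserted properties. First, $\tilde{X}_1^{(1)}$ is a vertex-disjoint union of stars: no heavy vertex lies in any $A_u$, and if $w\in A_v\cap A_{v'}$ with $v\ne v'$ heavy then $w$ is a common neighbour of $v$ and $v'$ and is therefore excluded from $A_v$ — so the sets $\{v\}\cup A_v$ are pairwise disjoint. Second, $d_1(\tilde{X}_2^{(1)})=o(\log n/\log\log n)$: a light vertex retains at most its $\tilde{X}^{(1)}$-degree, which is $<\eta_n$; and a heavy $v\in U_C$ cedes exactly its $|A_v|$ incident edges to $\tilde{X}_1^{(1)}$, so
\begin{equation*}
\deg_{\tilde{X}_2^{(1)}}(v)=|N(v)\setminus A_v|\le|N(v)\cap U_C|+\sum_{u\in U_C\setminus\{v\}}|N(v)\cap N(u)|\le|U_C|(k_n+3)\le\frac{2(m_n+k_n)(k_n+3)}{\eta_n},
\end{equation*}
which is $o(\log n/\log\log n)$; maximizing over all vertices gives the bound.

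The genuinely delicate point is the balancing of parameters: to afford failure probability $e^{-\omega(\log n)}$ one is forced to allow components as large as $m_n=\omega(\log n/\log\log n)$, which is a priori too large for the target degree bound, and the slack is recovered only by combining the two structural consequences on $\mathcal{W}$ — at most $O(m_n/\eta_n)$ heavy vertices per component, and at most $k_n+2$ common neighbours for any two of them — while choosing $m_n,k_n,\eta_n$ so that $e^{-\omega(\log n)}$ failure, $m_nk_n/\eta_n=o(\log n/\log\log n)$ and $\eta_n=o(\log n/\log\log n)$ all hold at once. The displayed choices work, and so do many others.
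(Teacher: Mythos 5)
Your proof is correct. Note that the paper itself does not supply a proof of this lemma — it is invoked by citation to \cite[Lemma 5.4]{ghn22} — so I cannot compare against an in-paper argument, but your reconstruction is sound and self-contained, and it proceeds along the natural lines one would expect: establish the structural event (bounded component size and bounded edge excess) with failure probability $e^{-\omega(\log n)}$ via a first-moment bound over connected subgraphs, and then peel off a star around each heavy vertex. The clever technical points in your argument all check out: the bound $|U_C|\le 2(m_n+k_n)/\eta_n$ follows from summing degrees; the bound $c\le k_n+2$ on common neighbours of two heavy vertices is a correct application of Lemma \ref{e-v lemma} to the bipartite-plus-two-centres subgraph; the sets $\{v\}\cup A_v$ are pairwise disjoint precisely because $A_v$ excludes $U_C$ and all neighbours of other heavy vertices; and the residual degree of a heavy vertex is $|N(v)\setminus A_v|\le|U_C|(k_n+3)=O\!\left(\frac{m_nk_n}{\eta_n}\right)$, which your parameter choices make $\Theta\!\left(\frac{\log n}{(\log\log n)(\log\log\log n)}\right)=o\!\left(\frac{\log n}{\log\log n}\right)$, while light vertices already have degree $<\eta_n=o\!\left(\frac{\log n}{\log\log n}\right)$. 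The parameter balancing you flag at the end — needing $m_n=\omega(\log n/\log\log n)$ for the failure probability while still salvaging the degree bound through $m_nk_n/\eta_n$ — is indeed the crux, and your choices handle it correctly. One small remark for completeness: a heavy $v$ could in principle have $A_v=\emptyset$ (giving a degenerate, single-vertex ``star''), but this causes no trouble since the degree bound $\deg_{\tilde X_2^{(1)}}(v)\le|U_C|(k_n+3)$ does not depend on $A_v$ being nonempty; one may simply drop such vertices from the list of star centres.
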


We condition on the high-probability event $\mathcal{W}$. Define $X_1^{(1)}$, $X_2^{(1)}$ as the directed subgraphs of $X^{(1)}$ corresponding to $\tilde{X}_1^{(1)}$, $\tilde{X}_2^{(1)}$, respectively. Here we have the understanding that $X^{(1)}=X_1^{(1)}+X_2^{(1)}$ where all self-loops of $X^{(1)}$ are included in $X_2^{(1)}$. In this way, Lemma \ref{star decomposition} implies that $X_1^{(1)}$ is a union of vertex-disjoint directed stars, and $X_2^{(1)}$ has maximum degree $o\left( \frac{\log n}{\log \log n}\right)$ since a self-loop increases the degree by only $1$. Similarly, let $Z_1^{(1)}$ and $Z_2^{(1)}$ be the decomposition of $Z^{(1)}$ corresponding to $X_1^{(1)}$ and $X_2^{(1)}$, respectively.

The following proposition is a directed version of \cite[Proposition 5.5]{ghn22} and will be crucial in analyzing the spectral behavior of $Z_2^{(1)}$.

\begin{proposition} \label{infinity}
For $n\in\mathbb{N}$ and $\varepsilon ,\delta_1, \delta_2, \delta_3 >0$, let $\mathcal{G}:= \mathcal{G}_{n,\varepsilon ,\delta_1 ,\delta_2, \delta_3}$ be the set of connected directed networks $G$ such that
\begin{enumerate}
\item $d_1(G) =o\left( \frac{\log n}{\log \log n}\right)$,
\item $|V(G)| \leq \frac{1+\delta_1}{\varepsilon}\frac{\log n}{\log \log n}$,
\item $|E(\tilde{G})| \leq |V(\tilde{G})| +\delta_2$,
\item $S(G)\leq \delta_3$,
\end{enumerate}
where $\tilde{G}$ is the symmetrization of $G$ and $S(G)$ is the number of self-loops in $G$.
Assume that the edge-weights are i.i.d. Weibull distributions with shape parameter $\alpha >2$ conditioned to be greater than $(\varepsilon \log \log n)^{\frac{1}{\alpha}}$ in absolute value. Then, for any constant $c>0$,
\begin{align} \label{limit infinity}
\lim_{n\to\infty}\frac{-\log \sup_{G\in\mathcal{G}} \mathbb{P}(\| G\| \geq c\lambda_{\alpha})}{\log n}=\infty .
\end{align}
\end{proposition}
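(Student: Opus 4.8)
The plan is to bound $\|G\|$ for $G \in \mathcal{G}$ in terms of its connected component structure and the magnitudes of its finitely many edge-weights, then use the Weibull upper tail (conditioned on being large) together with the fact that $\mathcal{G}$ consists of sparse, low-degree, nearly-tree networks to show the probability decays faster than any power of $n$. First I would apply Proposition \ref{reduction of cliques} to $G$, producing an undirected network $H$ with $|E(H)| = |E(G)|$, $d_1(H) \le d_1(G) = o(t_n)$, maximum clique size $2$, edge-weights a rearrangement of those of $G$, and $\|G\| \le \|H\|$; moreover each component $H_i$ satisfies $|E(H_i)| - |V(H_i)| \le 2(|E(\tilde G)| - |V(\tilde G)|) + S(G) \le 2\delta_2 + \delta_3 =: \delta'$. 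Since $\|G\| \le \|H\| = \max_i \|H_i\|$ and $G$ is connected (so $|V(G)|$, hence $|V(H)|$, is controlled by condition (2)), there are only boundedly many (in fact $O(t_n^2)$) components $H_i$, so by a union bound it suffices to estimate $\mathbb{P}(\|H_i\| \ge c\lambda_\alpha)$ for a single component uniformly over the allowed shapes.

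Next I would bound $\|H_i\|$ crudely by, say, the Frobenius norm or by $\sqrt{d_1(H_i)} \cdot \max_{e} w_e$ over the at most $|E(H_i)| \le |V(H_i)| + \delta'$ edges — but the sharp estimate needed here should instead exploit that $H_i$ is "almost a tree": with $|E(H_i)| - |V(H_i)| \le \delta'$ and max degree $o(t_n)$, $H_i$ is a tree plus at most $\delta' + 1$ extra edges, and a spanning structure argument (as in \cite{ghn22}) gives $\|H_i\| \le C\sqrt{d_1(H_i)}\cdot \max_{\text{leaves}} |w_e| + (\text{lower-order terms from the bounded excess edges})$, or more precisely $\|H_i\|^2 \lesssim \sum_{v} (\text{sum of squared weights at } v)$ restricted cleverly. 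The cleanest route: since $\|H_i\| \le \|H_i\|_F = (\sum_e w_e^2)^{1/2}$ with at most $N := \frac{1+\delta_1}{\varepsilon} t_n + \delta'$ edges, $\|H_i\| \ge c\lambda_\alpha$ forces $\max_e w_e^2 \ge c^2 \lambda_\alpha^2 / N \asymp \lambda_\alpha^2 \log\log n / \log n = B_\alpha^2 (\log\log n)^{-1+2/\alpha}\cdot(\log\log n) \cdot c^2 (1+\delta_1)^{-1}\varepsilon = \Theta((\log\log n)^{2/\alpha})$, so some $|w_e| \gtrsim (\log\log n)^{1/\alpha}$. This is only a constant multiple above the conditioning threshold $(\varepsilon\log\log n)^{1/\alpha}$ and does not yet give super-polynomial decay, so the Frobenius bound alone is too lossy — I expect this to be the main obstacle.

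To get the required super-polynomial decay I would instead follow the quantitative argument of \cite[Proposition 5.5]{ghn22}: peel leaves of $H_i$, controlling the norm contribution by $\|H_i\| \le \max_v \big(\sum_{u \sim v} w_{uv}^2\big)^{1/2} + (\text{correction from excess edges and depth})$, where the depth of $H_i$ is $O(\log|V(H_i)|) = O(\log t_n)$. A vertex $v$ contributing $\sum_{u \sim v} w_{uv}^2 \ge c' \lambda_\alpha^2$ has $d_1(G) = o(t_n)$ neighbors, so the conditional probability (given the shape) that the sum of $o(t_n)$ i.i.d. squared Weibull weights — each conditioned to exceed $(\varepsilon \log\log n)^{2/\alpha}$ — reaches $c'\lambda_\alpha^2 = \Theta(\log n / (\log\log n)^{1-2/\alpha})$ is, by the Weibull tail estimate \eqref{151} and a standard optimization over how the total is split among the coordinates, bounded by $n^{-K}$ for $K = K(c,\varepsilon,\delta_1)$ that tends to $\infty$ as $\varepsilon \to 0$; crucially, since $\varepsilon$ is a free small parameter (the truncation level), and $d_1(G) = o(t_n)$ rather than $\Theta(t_n)$, the exponent $K$ actually diverges for each fixed $\varepsilon > 0$ because the "budget" $t_n$ worth of deviation must be achieved using only $o(t_n)$ weights, each paying tail cost $\asymp (\text{weight})^\alpha$. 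Summing over the $O(t_n)$ choices of $v$, the $O(t_n^2)$ components, and the $n^{O(t_n)}$-many possible shapes $G$ (controlled since $|V(G)| = O(t_n)$ and $d_1 = o(t_n)$, giving at most $n^{o(t_n) \cdot O(t_n)} = n^{o((\log n)^2)}$ shapes, which is $e^{o((\log n)^2)}$ and hence negligible against $n^{-K}$ with $K \to \infty$), we obtain that $-\log \sup_{G \in \mathcal{G}} \mathbb{P}(\|G\| \ge c\lambda_\alpha)/\log n \to \infty$. The delicate point, where I'd spend the most care, is making the leaf-peeling norm bound on the almost-tree $H_i$ precise enough that the deviation is genuinely "concentrated at one vertex of degree $o(t_n)$" rather than "spread thinly," and tracking how the bounded excess $|E(H_i)| - |V(H_i)| \le \delta'$ and the $S(G) \le \delta_3$ self-loops affect this bound — exactly the content imported from \cite[Proposition 5.5]{ghn22}, adapted through the clique reduction so that we only ever deal with undirected, clique-free, max-degree-$o(t_n)$ networks.
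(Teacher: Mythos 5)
Your core route is the same as the paper's: apply Proposition \ref{reduction of cliques} to $G$, check that each component $H_i^G$ of the resulting clique-free undirected network inherits the required structural bounds $d_1 = o(t_n)$, $|V| = O(t_n/\varepsilon)$, $|E|-|V| \le 2\delta_2 + \delta_3$, and then invoke the undirected estimate of \cite[Proposition 5.5]{ghn22} to conclude. That is exactly what the paper does, and your structural verification is correct. The paper simply treats \cite[Proposition 5.5]{ghn22} as a black box giving a uniform super-polynomial bound over the undirected family $\mathcal{H}$, whereas you spend most of your space trying to re-derive its innards (the Frobenius detour, which you correctly discard, and then the leaf-peeling sketch); none of that is needed once the citation is in hand.

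One piece of your closing argument is a genuine slip, though. You write that you must sum over ``the $n^{O(t_n)}$-many possible shapes $G$,'' estimate this as $e^{o((\log n)^2)}$, and claim it is ``negligible against $n^{-K}$ with $K\to\infty$.'' First, no such union bound is needed: the target is $\sup_{G\in\mathcal{G}}\mathbb{P}(\|G\|\ge c\lambda_\alpha)$, a supremum over deterministic shapes, so a uniform per-shape bound suffices and the count of shapes is irrelevant. Second, as stated the step does not close: $e^{o((\log n)^2)} = n^{o(\log n)}$ is super-polynomial, so multiplying it against $n^{-K}$ with $K = K(n)\to\infty$ gives nothing unless you separately establish that $K(n)$ grows faster than $o(\log n)$, which you do not argue. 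The paper's actual union bound is only over the $o(n)$ components of a single $H^G$ (a polynomial factor), not over shapes; replacing your shape union with this component union, and otherwise leaning on the citation to \cite[Proposition 5.5]{ghn22} rather than re-proving it, makes the argument match the paper's and closes the gap.
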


\begin{proof}
The strategy is to reduce the problem to undirected networks so that the original proposition for undirected networks with the same structural conditions can be applied. Since taking absolute values of edge-weights does not decrease the norm, it suffices to prove \eqref{limit infinity} for weights having absolute values of conditioned Weibull distributions.
Let $H^G$ be the undirected network obtained by applying Proposition \ref{reduction of cliques} to $G$ and let $H^G_1,\ldots ,H^G_{\xi (G)}$ be its components where $\xi (G)\in\mathbb{N}$ is the number of components of $G$. Then each $H^G_i$ ($1\leq i\leq \xi (G)$) satisfies the following structural properties:
\begin{gather*}
 d_1(H^G_i) \leq d_1(H^G) \leq d_1(G) =o\left( \frac{\log n}{\log \log n}\right) ,\\
 |V(H^G_i)| \leq |V(H^G)| \leq 2|V(G)| \leq \frac{2(1+\delta_1)}{\varepsilon}\frac{\log n}{\log \log n},\\
|E(H^G_i)|-|V(H^G_i)| \leq 2(|E(\tilde{G})|-|V(\tilde{G})|)+S(G)\leq 2\delta_2+\delta_3.
\end{gather*}
Now \cite[Proposition 5.5]{ghn22} applies to the family $\mathcal{H}=\lbrace H_i^G\, | \, G\in\mathcal{G}, 1\leq i\leq \xi (G)\rbrace$ and 
\begin{align} \label{symmetric infinity}
\lim_{n\to\infty}\frac{-\log\sup_{H_i^G\in \mathcal{H}} \mathbb{P}(\lambda_1 (H_i^G)\geq c\lambda_{\alpha})}{\log n}=\infty .
\end{align}
Since $H_i^G$ is an undirected network with nonnegative entries, its largest eigenvalue equals the spectral norm and 
\begin{align}
\| H^G\| =\max_{1\leq i\leq \xi (G)}\| H_i^G\| =\max_{1\leq i\leq \xi (G)}\lambda_1 (H_i^G).
\end{align}
Each $H^G$ has $o(n)$ components, so
\begin{align} \label{supsupsup}
\sup_{G\in\mathcal{G}}\mathbb{P}(\| G\| \geq c\lambda_{\alpha}) & \leq \sup_{G\in\mathcal{G}}\mathbb{P}(\| H^G\| \geq c\lambda_{\alpha}) \leq o(n)\sup_{H_i^G\in\mathcal{H}}\mathbb{P}(\lambda_1(H_i^G)\geq c\lambda_{\alpha}).
\end{align}
Applying \eqref{supsupsup} to \eqref{symmetric infinity} completes the proof.
\end{proof}

Next we estimate the spectral contribution of $Z_1^{(1)}$. We denote $g(\gamma )=\lceil \gamma \frac{\log n}{\log \log n}\rceil$ and $d(S)=d^+(v) \vee d^-(v)$ for a directed star $S$ and its root vertex $v$. 
First we have the following analogue of \cite[Lemma 5.6]{ghn22} for the norm of a single directed star:

\begin{lemma} \label{single directed star}
Suppose that $S$ is a weighted directed star such that $d(S)\leq g(\gamma )$ for some $\gamma >0$, with i.i.d. weights having Weibull distributions with shape parameter $\alpha >2$ and conditioned to be greater than $(\varepsilon \log \log n)^{\frac{1}{\alpha}}$ in absolute value. Then, for any $\rho >0$,
\begin{align*}
\liminf_{n\to\infty}\frac{-\log \mathbb{P}(\| S\| \geq (1+\rho )\lambda_{\alpha})}{\log n}\geq (1+\rho )^{\alpha}\frac{2}{\alpha -2}\left( 1-\frac{2}{\alpha}\right)^{\frac{\alpha}{2}}\gamma^{1-\frac{\alpha}{2}}-\varepsilon \gamma .
\end{align*}
\end{lemma}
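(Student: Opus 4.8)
The plan is to reduce the directed star $S$ to the relevant i.i.d.\ tail estimate via Lemma~\ref{directed star}, then optimize over how many of the star's edges carry "large" weights. By Lemma~\ref{directed star}, $\| S\|^2 = \sum_i a_i^2 \vee \sum_i b_i^2$ where $a_i$ are the outgoing edge-weights and $b_i$ the incoming ones; since $d(S) = d^+(v)\vee d^-(v) \leq g(\gamma)$, each of these two sums has at most $g(\gamma)$ terms. So $\| S\| \geq (1+\rho)\lambda_\alpha$ forces one of the two sums of at most $g(\gamma)$ i.i.d.\ conditioned-Weibull squares to exceed $(1+\rho)^2\lambda_\alpha^2$, and a union bound over the two sides costs only a factor $2$, i.e.\ is irrelevant on the $\log n$ scale. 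Thus it suffices to bound, for $k \leq g(\gamma)$,
\begin{align*}
\mathbb{P}\left( \sum_{i=1}^{k} W_i^2 \geq (1+\rho)^2 \lambda_\alpha^2 \right),
\end{align*}
where $W_1,\ldots,W_k$ are i.i.d.\ with the absolute value of a Weibull($\alpha$) law conditioned to exceed $(\varepsilon\log\log n)^{1/\alpha}$.

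The second step is the Weibull large-deviation input. For a sum of $k$ i.i.d.\ heavy-ish tails with density $\sim e^{-t^\alpha}$ and $\alpha > 2$ (so $t \mapsto t^\alpha$ is convex and the cheapest way to make the sum large is to spread mass, not concentrate it on one variable), the optimal configuration has all $W_i^2$ roughly equal to $(1+\rho)^2\lambda_\alpha^2/k$, i.e.\ $W_i \approx (1+\rho)\lambda_\alpha/\sqrt{k}$, at a cost $\sim k \cdot \big((1+\rho)\lambda_\alpha/\sqrt{k}\big)^\alpha = (1+\rho)^\alpha \lambda_\alpha^\alpha k^{1-\alpha/2}$ in the exponent. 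Plugging in $\lambda_\alpha = B_\alpha (\log n)^{1/2}(\log\log n)^{1/\alpha - 1/2}$ and $k = \gamma \frac{\log n}{\log\log n}$, one computes $\lambda_\alpha^\alpha k^{1-\alpha/2} = B_\alpha^\alpha \gamma^{1-\alpha/2} \log n \cdot (\text{powers of }\log\log n\text{ that cancel})$, and with $B_\alpha^\alpha = 2 \alpha^{-\alpha/2}(\alpha-2)^{\alpha/2-1}$ this matches the claimed leading constant $(1+\rho)^\alpha \frac{2}{\alpha-2}(1-\frac{2}{\alpha})^{\alpha/2}$ after simplification. The conditioning that each $|W_i| > (\varepsilon\log\log n)^{1/\alpha}$ contributes a correction: the normalization constant of the conditioned law is $\mathbb{P}(|W|>(\varepsilon\log\log n)^{1/\alpha})^{-1} \asymp e^{\varepsilon\log\log n} = (\log n)^\varepsilon$, and over $k \approx \gamma\frac{\log n}{\log\log n}$ edges this inflates the probability by at most $(\log n)^{\varepsilon k} = n^{\varepsilon\gamma + o(1)}$, which is exactly the $-\varepsilon\gamma$ slack in the statement. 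I expect this to follow from a Weibull tail lemma from the appendix (the analogue of \eqref{151} for sums), applied term by term after passing to the event that every $W_i^2$ lies near the common optimal value.

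The third step is the optimization over $k$. The exponent bound from step two is, up to the $\varepsilon\gamma$ correction and $o(1)$ errors, $(1+\rho)^\alpha\frac{2}{\alpha-2}(1-\frac{2}{\alpha})^{\alpha/2} (\gamma')^{1-\alpha/2}$ where $\gamma' = k/(\frac{\log n}{\log\log n}) \leq \gamma$; since $\alpha > 2$ the exponent $1 - \alpha/2 < 0$, so this is decreasing in $\gamma'$ and minimized at $\gamma' = \gamma$, i.e.\ the worst case (smallest rate) is a star of the maximum allowed degree $g(\gamma)$. One should also rule out the degenerate configurations (one huge edge, or $k$ very small), but for $\alpha > 2$ convexity makes those strictly more expensive, so they do not affect the infimum. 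Taking $\liminf_{n\to\infty}$ of $-\log\mathbb{P}/\log n$ then yields the claimed bound.

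The main obstacle is making step two rigorous and uniform: one needs a clean large-deviation estimate for sums of a \emph{fixed but growing} number $k = \Theta(\log n/\log\log n)$ of i.i.d.\ conditioned-Weibull random variables, with the exponent identified to leading order and all errors genuinely $o(\log n)$ uniformly over $k \leq g(\gamma)$ and over the conditioning threshold $(\varepsilon\log\log n)^{1/\alpha}$. This is precisely the kind of estimate that \cite{ghn22} proves for undirected single stars in their Lemma~5.6, and since Lemma~\ref{directed star} shows the norm of a directed star is governed by the very same quantity (a sum of squared i.i.d.\ weights over $\leq g(\gamma)$ edges) as the norm of an undirected star, the cleanest route is to invoke the undirected computation verbatim — or to quote the appendix Weibull-sum lemma it rests on — rather than redo the Laplace-method optimization. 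The remaining work is then bookkeeping: tracking the $(\log\log n)^{1/\alpha - 1/2}$ powers through $\lambda_\alpha^\alpha$, confirming the $(\log n)^{\varepsilon k}$ normalization bound, and checking that the two-sided union bound and the over-$k$ optimization cost nothing on the $\log n$ scale.
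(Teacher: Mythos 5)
Your proposal is correct and follows the paper's proof essentially verbatim: reduce via Lemma~\ref{directed star} to a two-sided union bound (factor $2$, negligible), pad the sum to $g(\gamma)$ terms (monotonicity makes the maximal-degree star the worst case, exactly as you argue), and then invoke the conditioned-Weibull sum tail estimate from the appendix — this is precisely equation \eqref{152} of Lemma~\ref{weibull lt estimate} applied with $d=1+\rho$ and $b=\gamma$, whose $-b\varepsilon$ correction is exactly the $(\log n)^{\varepsilon g(\gamma)} = n^{\varepsilon\gamma+o(1)}$ normalization inflation you identified. The heuristic Laplace-method discussion is a correct account of why that lemma holds, but as you note yourself, the clean route is simply to quote it.
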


\begin{proof}
Let $\lbrace \tilde{Y}_i\rbrace$ be i.i.d. Weibull distributions with shape parameter $\alpha >2$ and conditioned to be greater than $(\varepsilon \log \log n)^{\frac{1}{\alpha}}$ in absolute value. By Lemma \ref{directed star},
\begin{align*}
\mathbb{P}(\| S\| \geq (1+\rho )\lambda_{\alpha}) \leq 2\mathbb{P}\left( \tilde{Y}_1^2+\cdots +\tilde{Y}_{g(\gamma )}^2 \geq (1+\rho )^2 \lambda_{\alpha}^2\right) .
\end{align*}
The factor $2$ is negligible and applying the tail bound \eqref{152} with $d=1+\rho$ and $b=\gamma$ yields the desired estimate. 
\end{proof}

The contribution of directed stars will be analyzed using the fact that a directed star $S$ in $X_1^{(1)}$ corresponds to an undirected star $\tilde{S}$ in $\tilde{X}_1^{(1)}$. We first restate some definitions from \cite{ghn22}.
Let $d\left( \tilde{X}^{(1)},v\right)$ be the degree of $v$ in $\tilde{X}_1^{(1)}$ and define
\begin{align}
D_{\gamma}^{(1)} =\left\lbrace v\in V: d\left( \tilde{X}^{(1)},v\right) \geq g(\gamma )\right\rbrace 
\end{align}
for $\gamma \geq 0$. For a small constant $\kappa >0$ to be determined later, define $m$ as an integer satisfying $m\kappa <1\leq (m+1) \kappa$. Now define the following events measurable with respect to $\tilde{X}_1^{(1)}$:
\begin{align}
\mathcal{P}_{\kappa} & :=\left\lbrace |D_{i\kappa}^{(1)}| \leq n^{1-i\kappa +\kappa } \  \text{for all } i=0,1,\cdots ,m\right\rbrace ,\\
\mathcal{L}_{\delta ,\kappa} & :=\left\lbrace |D_{1+\kappa }^{(1)}| \leq \frac{(1+\delta )^2}{\kappa}\right\rbrace .
\end{align}
$\mathcal{P}_{\kappa}$ controls the number of vertices in each degree range under $g(\gamma )$, and $\mathcal{L}_{\delta ,\kappa}$ bounds the number of vertices with large degree. Using these definitions, we group directed stars according to the degree of their symmetrizations (undirected stars) and calculate the contribution from each group.

By observing the proof of \cite[Lemma 5.7]{ghn22}, we note that the same results follows for the spectral norm of directed stars in $X_1^{(1)}$ if we replace \cite[Lemma 5.6]{ghn22} by Lemma \ref{single directed star} above.

\begin{lemma}[{\cite[Lemma 5.7, 5.8]{ghn22}}] \label{modified lemma 5.7}
Let $\mathcal{S}$ be a collection of directed stars in $X_1^{(1)}$. For any $h>0$ and $\gamma =i\kappa <1$ or $\gamma \geq 1+\kappa$, define
\begin{align}
\sigma_{\max}(\gamma ,h) := \max_{S\in \mathcal{S},d(\tilde{S})\in (g(\gamma ), g(\gamma +h)]} \| S\|
\end{align}
if there exists a directed star satisfying the conditions in the subscript and $\sigma_{\max}(\gamma ,h) :=0$ otherwise. Then for any $\rho >0$,
\begin{align} \label{24}
\liminf_{n\to\infty} & \frac{-\log \mathbb{E}\left[ \mathbb{P}\left( \sigma_{\max}(\gamma ,h) \geq (1+\rho )\lambda_{\alpha} \, | \, X^{(1)}\right) \mathds{1}_{\mathcal{P}_{\kappa}\cap \mathcal{L}_{\delta ,\kappa}}\right]}{\log n}\nonumber\\
& \geq -f_{\alpha ,\rho}(\gamma +h) -h-\kappa -\varepsilon (\gamma +h),
\end{align}
where the function $f_{\alpha ,\rho}: (0,\infty )\to \mathbb{R}$ is defined by
\begin{align}
f_{\alpha ,\rho }(x) := 1-x-(1+\rho )^{\alpha}\frac{2}{\alpha -2}\left( 1-\frac{2}{\alpha}\right)^{\frac{\alpha}{2}}x^{1-\frac{\alpha}{2}}.
\end{align}
Furthermore, for $\alpha >2$,
\begin{align}
\max_{\gamma >0}f_{\alpha ,\rho}(\gamma ) =1-(1+\rho )^2 \quad \text{and} \quad \gamma_{\rho} := \underset{\gamma >0}{\arg\max}f_{\alpha ,\rho}(\gamma )=(1+\rho )^2 \left( 1-\frac{2}{\alpha}\right) .
\end{align}
\end{lemma}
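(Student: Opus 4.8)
The plan is to follow the proof of \cite[Lemmas 5.7 and 5.8]{ghn22} essentially line by line, using the single directed star estimate of Lemma \ref{single directed star} in place of its undirected counterpart \cite[Lemma 5.6]{ghn22}. The estimate \eqref{24} will come from a union bound: on $\mathcal{P}_\kappa$ and $\mathcal{L}_{\delta,\kappa}$ there are only a controlled number of directed stars whose symmetrizations have degree in the window $(g(\gamma), g(\gamma+h)]$, and each such star has operator norm below $(1+\rho)\lambda_\alpha$ with overwhelming probability.

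First I would fix the per-star estimate. A directed star $S$ with $d(\tilde S) \in (g(\gamma), g(\gamma+h)]$ has center degree $d(S)=d^+(v)\vee d^-(v)\leq d(\tilde S)\leq g(\gamma+h)$, and since $\alpha>2$ the map $x\mapsto (1+\rho)^\alpha \frac{2}{\alpha-2}(1-\frac{2}{\alpha})^{\alpha/2}x^{1-\alpha/2}-\varepsilon x$ is decreasing; hence the correct uniform bound is Lemma \ref{single directed star} evaluated at $x=\gamma+h$, giving $\mathbb{P}(\|S\|\geq (1+\rho)\lambda_\alpha\mid X^{(1)})\leq n^{-(1+\rho)^\alpha\frac{2}{\alpha-2}(1-\frac{2}{\alpha})^{\alpha/2}(\gamma+h)^{1-\alpha/2}+\varepsilon(\gamma+h)+o(1)}$, uniformly over such $S$ (conditioning on $X^{(1)}$ is harmless since the surviving weights remain i.i.d.\ conditioned Weibulls). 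Next I would count these stars: $\tilde X_1^{(1)}$ is a vertex-disjoint union of undirected stars and $g(\gamma)\to\infty$, so eventually each star of degree $>g(\gamma)$ carries exactly one vertex of degree $>g(\gamma)$, whence the number of relevant $S$ is at most $|D_\gamma^{(1)}|$ — at most $n^{1-\gamma+\kappa}$ on $\mathcal{P}_\kappa$ when $\gamma=i\kappa<1$, and at most $(1+\delta)^2/\kappa=n^{o(1)}$ on $\mathcal{L}_{\delta,\kappa}$ when $\gamma\geq 1+\kappa$. Multiplying and taking $\mathbb{E}[\,\cdot\,\mathds{1}_{\mathcal{P}_\kappa\cap\mathcal{L}_{\delta,\kappa}}]$: in the case $\gamma=i\kappa<1$ the negated limiting exponent is at least $-(1-\gamma+\kappa)+(1+\rho)^\alpha\frac{2}{\alpha-2}(1-\frac{2}{\alpha})^{\alpha/2}(\gamma+h)^{1-\alpha/2}-\varepsilon(\gamma+h)$, which equals $-f_{\alpha,\rho}(\gamma+h)-h-\kappa-\varepsilon(\gamma+h)$ since $f_{\alpha,\rho}(x)=1-x-(1+\rho)^\alpha\frac{2}{\alpha-2}(1-\frac{2}{\alpha})^{\alpha/2}x^{1-\alpha/2}$; in the case $\gamma\geq 1+\kappa$ the $n^{o(1)}$ star count is not enough by itself, so I would additionally use $\{\sigma_{\max}(\gamma,h)>0\}\subseteq\{\exists v: d(\tilde X^{(1)},v)>g(\gamma)\}$, whose probability is $\leq n^{1-\gamma+o(1)}$ by the binomial tail estimate (Lemma \ref{binomial loglog estimate}), inserting this indicator before taking the expectation; this produces the same bound with $\gamma-1$ in place of $\gamma-1-\kappa$, which is stronger, so \eqref{24} holds in both cases.

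For the ``Furthermore'' statement, writing $c:=(1+\rho)^\alpha\frac{2}{\alpha-2}(1-\frac{2}{\alpha})^{\alpha/2}$ gives $f_{\alpha,\rho}(x)=1-x-cx^{1-\alpha/2}$ on $(0,\infty)$, with $f_{\alpha,\rho}''(x)=c\frac{\alpha}{2}(\frac{\alpha}{2}-1)x^{-\alpha/2-1}<0$ for $\alpha>2$; so $f_{\alpha,\rho}$ is strictly concave, and its unique maximizer solves $1=c(\frac{\alpha}{2}-1)x^{-\alpha/2}$, i.e.\ $x^{\alpha/2}=(1+\rho)^\alpha(1-\frac{2}{\alpha})^{\alpha/2}$, so $\gamma_\rho=(1+\rho)^2(1-\frac{2}{\alpha})$. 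Since $c\gamma_\rho^{1-\alpha/2}=\gamma_\rho/(\frac{\alpha}{2}-1)=\frac{2(1+\rho)^2}{\alpha}$, this yields $f_{\alpha,\rho}(\gamma_\rho)=1-(1+\rho)^2(1-\frac{2}{\alpha})-\frac{2(1+\rho)^2}{\alpha}=1-(1+\rho)^2$.

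Since the heavy lifting is already done in \cite{ghn22}, I expect the only real (and minor) obstacle to be verifying that Lemma \ref{single directed star} is a genuine directed substitute for \cite[Lemma 5.6]{ghn22}: this rests on Lemma \ref{directed star}, which makes $\|S\|$ the larger of the Euclidean norms of $S$'s outgoing and incoming weight vectors, each a sum of at most $d(S)$ independent conditioned-Weibull squares, so that its tail is governed by the same Weibull-sum estimate \eqref{152} used in the undirected case, up to a harmless factor $2$. The remaining technical points — uniformity of the $o(1)$ across the $n$-dependent star family, and the harmlessness of conditioning on $X^{(1)}$ — carry over verbatim from the construction in \cite{ghn22}.
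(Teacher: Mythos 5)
Your proposal is correct and takes essentially the same approach as the paper, which itself only records a one-line remark that the proof of \cite[Lemmas 5.7, 5.8]{ghn22} carries over verbatim once Lemma~\ref{single directed star} replaces the undirected star estimate; your fleshing out of the union bound over stars, the splitting into the $\gamma=i\kappa<1$ and $\gamma\geq 1+\kappa$ regimes via $\mathcal{P}_\kappa$ and $\mathcal{L}_{\delta,\kappa}$, and the calculus for the ``Furthermore'' part all match the intended argument. One minor typo: you wrote $f_{\alpha,\rho}''(x)=c\frac{\alpha}{2}\bigl(\frac{\alpha}{2}-1\bigr)x^{-\alpha/2-1}<0$, but the second derivative carries a leading minus sign, $f_{\alpha,\rho}''(x)=-c\frac{\alpha}{2}\bigl(\frac{\alpha}{2}-1\bigr)x^{-\alpha/2-1}$; with that fix your concavity conclusion is correct.
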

In \eqref{24}, the main contribution is $-f_{\alpha ,\rho}(\gamma +h)$ and $-h-\kappa -\varepsilon (\gamma +h)$ is considered as an error term.

Finally, we restate some lemmas required to estimate the probabilities of the events $\mathcal{P}_{\kappa}$ and $\mathcal{L}_{\delta ,\kappa}$.
\begin{lemma}[{\cite[Proposition 4.6]{ghn22}}] \label{number of large degree}
For any $0<\kappa <1$, $\mu >0$ and an integer $m$ that satisfies $m\kappa <1\leq (m+1)\kappa$, we have
\begin{align*}
\mathbb{P}(| D_{i\kappa}^{(1)}|\leq n^{1-i\kappa +\kappa} \text{ for all $i=0,1,\cdots ,m$}) \geq 1-n^{-\mu \kappa}
\end{align*}
for sufficiently large $n$.
\end{lemma}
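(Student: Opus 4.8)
The plan is to prove the statement by a first-moment argument followed by a union bound over $i$. For each fixed $i$ I would bound $\mathbb{E}\lvert D_{i\kappa}^{(1)}\rvert$, convert this into a tail estimate for $\lvert D_{i\kappa}^{(1)}\rvert$ by Markov's inequality, and then sum the resulting small probabilities over the $m+1$ indices $i=0,1,\dots,m$. The only substantive input is a tail estimate for the degree of a single vertex, which is exactly of the type recorded in the appendix and already invoked in \eqref{single column}.

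First I would note that $\tilde X_1^{(1)}$ is a subgraph of $\tilde X^{(1)}$, which is the adjacency matrix of $\mathcal{G}(n,2q-q^2)$ with $q\le\frac{d'}{n(\log n)^{\varepsilon}}$, hence stochastically dominated by $\mathcal{G}\bigl(n,\frac{2d'}{n(\log n)^{\varepsilon}}\bigr)$; in particular the degree of any fixed vertex in $\tilde X_1^{(1)}$ is stochastically dominated by $\mathrm{Binom}\bigl(n,\frac{2d'}{n(\log n)^{\varepsilon}}\bigr)$. Applying the binomial tail estimate of Section 6 (Lemma \ref{binomial loglog estimate}) with threshold $g(\gamma)=\lceil\gamma t_n\rceil$, and noting that the extra sparsification factor $(\log n)^{-\varepsilon}$ only decreases the bound, I obtain
\[
\mathbb{P}\bigl(d(\tilde X^{(1)},v)\ge g(\gamma)\bigr)\le n^{-\gamma+o(1)}.
\]
Since $\lvert D_{i\kappa}^{(1)}\rvert=\sum_{v\in[n]}\mathds{1}\{d(\tilde X^{(1)},v)\ge g(i\kappa)\}$, linearity of expectation — no independence among the vertex degrees is needed at this point — gives $\mathbb{E}\lvert D_{i\kappa}^{(1)}\rvert\le n^{1-i\kappa+o(1)}$.

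Then Markov's inequality yields, for each fixed $i\in\{0,\dots,m\}$,
\[
\mathbb{P}\bigl(\lvert D_{i\kappa}^{(1)}\rvert>n^{1-i\kappa+\kappa}\bigr)\le\frac{\mathbb{E}\lvert D_{i\kappa}^{(1)}\rvert}{n^{1-i\kappa+\kappa}}\le n^{-\kappa+o(1)}.
\]
A union bound over the $m+1\le\frac{1}{\kappa}+1=n^{o(1)}$ values of $i$ leaves the bound $n^{-\kappa+o(1)}$ intact, so the complement of the event in the statement has probability at most $n^{-\kappa+o(1)}$; equivalently the event holds with probability at least $1-n^{-\kappa+o(1)}\ge 1-n^{-\mu\kappa}$ for all large $n$ and any $\mu<1$. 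For general $\mu$ one instead retains the sparsification saving, replacing $n^{-\gamma+o(1)}$ above by $n^{-\gamma(1+\varepsilon)+o(1)}$, which improves the exponent accordingly.

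I do not expect a genuine obstacle here: the claim is a routine application of the first-moment method (it is cited verbatim from \cite{ghn22}). The only points that call for a little care are (i) the passage from the dependent family of vertex degrees to a single binomial variable, which is automatic because only the expectation enters, and (ii) checking that the $o(1)$ corrections in the binomial tail — arising from the ceiling in $g(\gamma)$ and from the $\log\log\log n$ terms in the entropy expansion — do not consume the exponent $\kappa$; this holds because $i\kappa\le 1$ is bounded uniformly over $i$.
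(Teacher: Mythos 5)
The paper does not prove this lemma itself; it cites \cite[Proposition 4.6]{ghn22} and uses the statement as a black box, so there is no internal proof for me to compare your route against. Judging the proposal on its own terms, the set-up is fine (stochastic domination of a single vertex degree by a binomial, the $n^{-\gamma(1+\varepsilon)+o(1)}$ per-vertex tail from the extra $(\log n)^{-\varepsilon}$ sparsification, and linearity of expectation), but the Markov step cannot deliver the conclusion for arbitrary $\mu$, and your closing remark that one "retains the sparsification saving" does not repair this.

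Concretely, even with the saving, Markov gives for each $i$
\[
\mathbb{P}\bigl(|D_{i\kappa}^{(1)}|>n^{1-i\kappa+\kappa}\bigr)\le \frac{n^{1-i\kappa(1+\varepsilon)+o(1)}}{n^{1-i\kappa+\kappa}}=n^{-i\kappa\varepsilon-\kappa+o(1)}.
\]
The cases $i=0,1$ are vacuous (thresholds $\geq n$), and for $i\geq 2$ the right-hand side is maximized at $i=2$, where it equals $n^{-\kappa(1+2\varepsilon)+o(1)}$; the union over at most $1/\kappa+1=n^{o(1)}$ values of $i$ leaves this unchanged. So the failure probability you obtain is of order $n^{-\kappa(1+2\varepsilon)+o(1)}$, which is $\leq n^{-\mu\kappa}$ for large $n$ only when $\mu<1+2\varepsilon$. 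Since $\varepsilon$ is a small, fixed truncation parameter that is eventually sent to $0$, and the paper applies the lemma with $\mu\kappa=(1+\delta)^2$ (so $\mu$ is large when $\kappa$ is small, see the display preceding \eqref{event k0}), the Markov bound genuinely falls short. The sparsification "improvement" is multiplicative by a factor $1+O(\varepsilon)$ in the exponent, not additive by an arbitrary amount.

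The missing ingredient is concentration rather than a first-moment bound: $|D_{i\kappa}^{(1)}|$ is a sum of weakly dependent indicator variables with mean $n^{1-i\kappa(1+\varepsilon)+o(1)}$, and the event asks it to exceed this mean by a polynomial factor $n^{\kappa+i\kappa\varepsilon+o(1)}$. For all non-trivial $i$ one has $1-i\kappa+\kappa>\kappa>0$, so the threshold $n^{1-i\kappa+\kappa}$ tends to infinity, and a Chernoff-type bound (or the combinatorial union bound over configurations of high-degree vertices and the edges they force, which is what \cite{ghn22} uses) yields a tail of size $\exp(-n^{c})$ for some $c>0$. This is super-polynomially small, hence dominates $n^{-\mu\kappa}$ for every fixed $\mu$; Markov's inequality throws away exactly that gain. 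You would need to add a second-moment or Chernoff input, together with a short argument handling the mild dependence among the degree indicators (each pair shares a single potential edge), to make the proposal correct.
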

To estimate the probability of the event $\mathcal{L}_{\delta ,\kappa}$ we utilize the following lemma on the largest degrees of Erd\H{o}s-R\'enyi graphs.
\begin{lemma}[{\cite[Proposition 1.3]{bbg21}}] \label{degrees}
Let $d_s$ be the $s$-th largest degree of $\mathcal{G}(n,\frac{d}{n})$. Then
\begin{align*}
\lim_{n\to\infty}\frac{-\log \mathbb{P}(d_1 \geq (1+\delta_1 )t_n, \ldots ,d_r \geq (1+\delta_r )t_n)}{\log n}=\sum_{s=1}^{r}\delta_s .
\end{align*}
\end{lemma}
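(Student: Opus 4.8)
The plan is to show that the probability $p_n:=\mathbb{P}(d_1\geq(1+\delta_1)t_n,\ldots,d_r\geq(1+\delta_r)t_n)$ satisfies $p_n=n^{-\sum_{s=1}^r\delta_s+o(1)}$ by proving matching bounds, using that the degree of a fixed vertex of $\mathcal{G}(n,\frac{d}{n})$ is $\mathrm{Binom}(n-1,\frac{d}{n})$ together with the loglog binomial tail estimate (Lemma \ref{binomial loglog estimate}, Lemma \ref{entropy lower bound}): for any fixed $c>0$ and any $N=\Theta(n)$ one has $\mathbb{P}(\mathrm{Binom}(N,\tfrac{d}{n})\geq c\,t_n)=n^{-c+o(1)}$, which also follows from Stirling since $\binom{N}{k}(d/N)^k\asymp d^k/k!$ and $\log k!=(1+o(1))c\log n$ for $k=\lceil c\,t_n\rceil$. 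Throughout I take $\delta_1\geq\delta_2\geq\cdots\geq\delta_r\geq 0$, the case relevant here, consistent with the ordering $d_1\geq\cdots\geq d_r$.

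\emph{Upper bound.} If the event occurs then some ordered $r$-tuple of distinct vertices $(v_1,\ldots,v_r)$ has $d(v_s)\geq(1+\delta_s)t_n$ for all $s$, so a union bound over the at most $n^r$ such tuples reduces matters to a fixed tuple. For a fixed tuple, decompose $d(v_s)=d'(v_s)+e_s$, where $d'(v_s)$ counts the neighbours of $v_s$ in $[n]\setminus\{v_1,\ldots,v_r\}$ and $0\leq e_s\leq r-1$; then $d'(v_1),\ldots,d'(v_r)$ are independent $\mathrm{Binom}(n-r,\frac{d}{n})$, and the event forces $d'(v_s)\geq(1+\delta_s)t_n-(r-1)=(1+\delta_s-o(1))t_n$ for every $s$, an event of probability $\prod_{s}n^{-(1+\delta_s)+o(1)}$ by independence and the binomial estimate (here $r$ is fixed, so the shift by $r-1$ is negligible against $t_n$). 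Summing over tuples, $p_n\leq n^r\prod_{s=1}^r n^{-(1+\delta_s)+o(1)}=n^{-\sum_s\delta_s+o(1)}$, i.e.\ $\liminf_n\frac{-\log p_n}{\log n}\geq\sum_s\delta_s$.

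\emph{Lower bound.} Fix the vertices $1,\ldots,r$, partition $\{r+1,\ldots,n\}$ into disjoint blocks $B_1,\ldots,B_r$ with $|B_s|=\frac{n}{r}(1+o(1))$, and let $\mathcal{E}_s$ be the event that vertex $s$ has at least $(1+\delta_s)t_n$ neighbours inside $B_s$. These events depend on pairwise disjoint collections of potential edges, hence are independent, and $\mathbb{P}(\mathcal{E}_s)=\mathbb{P}(\mathrm{Binom}(|B_s|,\frac{d}{n})\geq(1+\delta_s)t_n)=n^{-(1+\delta_s)+o(1)}$ — replacing $n$ by $n/r$ alters the governing term $\binom{|B_s|}{k}(d/n)^k$ only by the factor $r^{-k}=n^{-o(1)}$ since $k=\Theta(t_n)$. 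On $\bigcap_s\mathcal{E}_s$ each vertex $s\in[r]$ has degree at least $(1+\delta_s)t_n$, so by $\delta_1\geq\cdots\geq\delta_r$ the $s$-th largest degree over all $n$ vertices is at least $(1+\delta_s)t_n$ for every $s$; hence $p_n\geq\prod_{s=1}^r\mathbb{P}(\mathcal{E}_s)=n^{-\sum_s\delta_s+o(1)}$. Combining with the upper bound and dividing by $\log n$ yields the claim.

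The one genuine subtlety is the dependence among vertex degrees in the upper bound, which is removed by the decomposition $d(v_s)=d'(v_s)+O(r)$ with the $O(r)$ term harmless since $r$ is fixed; beyond that, one only needs the binomial tail to precision $n^{-c+o(1)}$ uniformly over the finitely many exponents $c=1+\delta_s$, exactly the loglog binomial estimate already proven in the appendix. No variational optimization is involved.
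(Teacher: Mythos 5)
The paper does not prove this lemma itself but cites it from \cite{bbg21}, so I evaluate your argument on its own. Your upper bound is correct: the union bound over $n^r$ ordered $r$-tuples of distinct vertices, together with the decoupling $d(v_s)=d'(v_s)+O(r)$ into independent binomials, gives $p_n\leq n^r\prod_s n^{-(1+\delta_s)+o(1)}=n^{-\sum_s\delta_s+o(1)}$, and the shift by $r-1$ is indeed negligible against $t_n$. (The estimate $\mathbb{P}(\mathrm{Binom}(\Theta(n),d/n)\geq ct_n)=n^{-c+o(1)}$ that you invoke is correct via Stirling, though it is not literally Lemma~\ref{binomial loglog estimate}, whose trial count is $\Theta(t_n)$ rather than $\Theta(n)$; the paper is equally loose about this at \eqref{single column}, so no harm done.)

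The lower bound, however, has a genuine gap. Fixing the $r$ witness vertices $1,\ldots,r$ gives $\mathbb{P}(\mathcal{E}_s)=n^{-(1+\delta_s)+o(1)}$, hence
\[
\prod_{s=1}^r\mathbb{P}(\mathcal{E}_s)=n^{-\sum_{s=1}^r(1+\delta_s)+o(1)}=n^{-r-\sum_s\delta_s+o(1)},
\]
not $n^{-\sum_s\delta_s+o(1)}$ as written in your last line --- the $r$ copies of ``$1$'' were silently dropped. This yields only $\limsup_n\frac{-\log p_n}{\log n}\leq r+\sum_s\delta_s$, which does not meet the upper bound: the entire factor of $n^r$, which in the upper bound came from the choice of the $r$-tuple, has been lost by fixing the vertices in advance. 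The repair is to let each block offer $\Theta(n)$ candidate witnesses rather than one. Split $[n]$ into disjoint sets $W_1,B_1,\ldots,W_r,B_r$ of size $\sim n/(2r)$, and redefine $\mathcal{E}_s:=\{\exists\,v\in W_s:\ v\text{ has }\geq(1+\delta_s)t_n\text{ neighbours in }B_s\}$. For a fixed $v\in W_s$ the single-vertex probability is $p_s=n^{-(1+\delta_s)+o(1)}$, these single-vertex events are independent over $v\in W_s$ (disjoint potential edges), and $|W_s|p_s=n^{-\delta_s+o(1)}\to 0$ for $\delta_s>0$, so $\mathbb{P}(\mathcal{E}_s)=1-(1-p_s)^{|W_s|}\geq 1-e^{-|W_s|p_s}\geq\tfrac12|W_s|p_s=n^{-\delta_s+o(1)}$; the $\mathcal{E}_s$ remain independent across $s$, and on $\bigcap_s\mathcal{E}_s$ your monotonicity argument forces $d_s\geq(1+\delta_s)t_n$ for all $s$. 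This gives $p_n\geq\prod_s\mathbb{P}(\mathcal{E}_s)=n^{-\sum_s\delta_s+o(1)}$ as required; a Paley--Zygmund second-moment bound on the count of high-degree vertices is an equally valid alternative.
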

Applying Lemma \ref{degrees} with $\delta_1 =\cdots =\delta_{\frac{(1+\delta )^2}{\kappa}}=\kappa$ yields
\begin{align} \label{l bound}
\mathbb{P}\left( \mathcal{L}_{\delta ,\kappa}^c\right) \leq n^{-(1+\delta )^2 +o(1)}.
\end{align}

\noindent \emph{Proof of upper bound of the upper tail.} Since $\| Z\| \leq \| Z^{(1)}\| +\| Z^{(2)}\|$,
\begin{align} \label{light tail split}
\mathbb{P}(\| Z\| \geq (1+\delta )\lambda_{\alpha}) \leq \mathbb{P}\left( \| Z^{(1)}\| \geq (1+\delta )\left( 1-\frac{\varepsilon^{\frac{1}{\alpha}}}{B_{\alpha}}\right) \lambda_{\alpha}\right) +\mathbb{P} \left( \| Z^{(2)}\| \geq \varepsilon^{\frac{1}{\alpha}}(1+\delta )\frac{\lambda_{\alpha}}{B_{\alpha}}\right) .
\end{align}
Lemma \ref{light tail negligible} implies
\begin{align} \label{37}
\mathbb{P} \left( \| Z^{(2)}\| \geq \varepsilon^{\frac{1}{\alpha}}(1+\delta )\frac{\lambda_{\alpha}}{B_{\alpha}}\right) \leq n^{1-(1+\delta )^2 +o(1)}
\end{align}
so it suffices to bound the second term on the right hand side of \eqref{light tail split}. 

Since $\tilde{X}^{(1)}\sim \mathcal{G}(n,q)$ for $q\leq \frac{2d'}{n(\log n)^{\varepsilon}}$, the series of events $\mathcal{D}$, $\mathcal{C}$, $\mathcal{E}$ defined in Section 3 applies to $\tilde{X}^{(1)}$.
Now define the event $\mathcal{K}_0$ measurable with respect to $\tilde{X}^{(1)}$:
\begin{align}
\mathcal{K}_0 := \mathcal{W} \cap \mathcal{D}_{(1+\delta )^2-1}\cap \mathcal{C}_{\varepsilon ,(1+\delta )^2-1}\cap \mathcal{E}_{(1+\delta )^2-1}\cap \mathcal{P}_{\kappa}\cap \mathcal{L}_{\delta ,\kappa}.
\end{align}
Using previous results,
\begin{align} \label{events}
& \mathbb{P} (\mathcal{W}^c) \leq e^{-\omega (\log n)} \quad \text{by Lemma \ref{star decomposition}},\nonumber\\
& \mathbb{P} (\mathcal{D}_{(1+\delta )^2-1}^c ) \leq n^{1-(1+\delta )^2+o(1)} \quad \text{by Lemma \ref{d}},\nonumber\\
& \mathbb{P} (\mathcal{C}_{\varepsilon ,(1+\delta )^2-1}^c ) \leq n^{1-(1+\delta )^2+o(1)} \quad \text{by Lemma \ref{c}},\nonumber\\
& \mathbb{P} (\mathcal{E}_{(1+\delta )^2-1}^c) \leq n^{1-(1+\delta )^2+o(1)} \quad \text{by Lemma \ref{e}},\\
& \mathbb{P}(\mathcal{P}_{\kappa}^c) \leq n^{-(1+\delta )^2+o(1)} \quad \text{by Lemma \ref{number of large degree}},\nonumber\\
& \mathbb{P}(\mathcal{L}_{\delta ,\kappa }^c) \leq n^{-(1+\delta )^2+o(1)} \quad \text{by \eqref{l bound}.}\nonumber
\end{align}
Combining the results above,
\begin{align} \label{event k0}
\mathbb{P}(\mathcal{K}_0^c) \leq n^{1-(1+\delta )^2+o(1)}.
\end{align}
By defining $\delta '$ as
\begin{align}
(1+\delta )\left( 1-\frac{\varepsilon^{\frac{1}{\alpha}}}{B_{\alpha}}\right) =(1+\delta ') +\varepsilon (1+\delta ),
\end{align}
we have
\begin{align} \label{z1 split}
\mathbb{P} \left( \| Z^{(1)}\| \geq (1+\delta )\left( 1-\frac{\varepsilon^{\frac{1}{\alpha}}}{B_{\alpha}}\right) \lambda_{\alpha}\right) \leq & \, \mathbb{E}\left[ \mathbb{P}\left( \| Z_1^{(1)}\| \geq (1+\delta ')\lambda_{\alpha} \, | \, \tilde{X}^{(1)}\right) \mathds{1}_{\mathcal{K}_0}\right] \nonumber\\
& +\mathbb{E}\left[ \mathbb{P} \left( \| Z_2^{(1)} \| \geq \varepsilon (1+\delta )\lambda_{\alpha} \, | \, \tilde{X}^{(1)}\right) \mathds{1}_{\mathcal{K}_0}\right] +\mathbb{P} (\mathcal{K}_0^c) .
\end{align}
From now we condition on the event $\mathcal{K}_0$. Since $\mathcal{K}_0$ implies $\mathcal{W}$, Lemma \ref{star decomposition} gives a decomposition of $\tilde{X}^{(1)}$ into $\tilde{X}_1^{(1)}$ and $\tilde{X}_2^{(1)}$ and the induced decomposition of $X^{(1)}$ into $X_1^{(1)}$ and $X_2^{(1)}$. Similarly, the corresponding matrix $Z^{(1)}$ is decomposed into $Z_1^{(1)}$ and $Z_2^{(1)}$.

\emph{Contribution from $Z_2^{(1)}$.} We check the conditions to apply Proposition \ref{infinity}. Conditioned on $\mathcal{K}_0$, if we let $C_1,\ldots ,C_m$ be the components of $X_2^{(1)}$, then the symmetrizations $\tilde{C}_1,\ldots ,\tilde{C}_m$ are the components of $\tilde{X}_2^{(1)}$. The definition of $X_2^{(1)}$ implies $d_1(X_2^{(1)}) =o\left(\frac{\log n}{\log \log n}\right)$. Since $\mathcal{K}_0$ implies $\mathcal{C}_{\varepsilon ,(1+\delta )^2-1}\cap \mathcal{E}_{(1+\delta )^2-1}$,
\begin{align*}
|V(C_k)| & = |V(\tilde{C}_k)| \leq \frac{(1+\delta )^2-1}{\varepsilon}\frac{\log n}{\log \log n}\\
|E(\tilde{C}_k)| & \leq |V(\tilde{C}_k)| +(1+\delta )^2-1 
\end{align*}
for each $1\leq k\leq m$.
Let $\mathcal{F}_{\delta_4}^k$ be the event that the number of self-loops in $C_k$ is less than $\delta_4$. For simplicity, let $a=\frac{(1+\delta )^2-1}{\varepsilon}$. By Lemma \ref{binomial loglog estimate},
\begin{align}
\mathbb{P}\left( (\mathcal{F}_{\delta_4}^k)^c \, | \, \tilde{X}^{(1)} \right) \mathds{1}_{\mathcal{K}_0} & \leq \mathbb{P}\left( \mathrm{Binom}\left( a\frac{\log n}{\log \log n} ,\frac{d}{n} \right) \geq \delta_4 \right) \leq n^{-\delta_4 +o(1)}.
\end{align}
Define $\mathcal{F}_{\delta_4}=\bigcap_{k=1}^{m}\mathcal{F}_{\delta_4}^k$. Then
\begin{align} \label{f estimate}
\mathbb{P}\left( (\mathcal{F}_{\delta_4})^c \, | \, \tilde{X}^{(1)}\right) \mathds{1}_{\mathcal{K}_0} \leq \sum_{k=1}^{m}\mathbb{P}\left( (\mathcal{F}_{\delta_4}^k)^c\, | \, \tilde{X}^{(1)}\right)\mathds{1}_{\mathcal{K}_0} \leq n^{1-\delta_4+o(1)}.
\end{align}
Define $Z_{2,k}^{(1)}$ be the component of $Z_2^{(1)}$ with underlying graph $C_k$.
On the event $\mathcal{K}_0\cap \mathcal{F}_{1+(1+\delta )^2}$, we may apply Proposition \ref{infinity} to the networks $Z_{2,1}^{(1)},\ldots ,Z_{2,m}^{(1)}$.
\begin{align} \label{39}
& \mathbb{P}\left( \| Z_2^{(1)}\|  \geq \varepsilon (1+\delta )\lambda_{\alpha}\, | \, \tilde{X}^{(1)}\right) \mathds{1}_{\mathcal{K}_0} \nonumber\\
& \leq \sup_{1\leq k\leq m}\mathbb{P}\left( \| Z_{2,k}^{(1)}\| \geq \varepsilon (1+\delta )\lambda_{\alpha}\, | \, \tilde{X}^{(1)}\right) \mathds{1}_{\mathcal{K}_0\cap \mathcal{F}_{1+(1+\delta )^2}}+\mathbb{P}\left( (\mathcal{F}_{1+(1+\delta )^2})^c\, | \, \tilde{X}^{(1)}\right)\mathds{1}_{\mathcal{K}_0} \nonumber\\
& \leq n^{-\omega (\log n)}+n^{-(1+\delta )^2+o(1)}=n^{-(1+\delta )^2+o(1)}.
\end{align}

\emph{Contribution from $Z_1^{(1)}$.}
We follow the proof of the contribution from $Z_1^{(1)}$ in \cite{ghn22}. Let $M$ be an integer satisfying
\begin{align*}
\frac{(1+\delta )^2}{\kappa} < M \leq \frac{(1+\delta )^2}{\kappa}+1.
\end{align*}
Conditioned on the event $\mathcal{D}_{(1+\delta )^2-1}$, $d_1(\tilde{X}_1^{(1)})\leq d_1(\tilde{X}^{(1)})\leq (1+\delta )^2\frac{\log n}{\log \log n}$. Thus for any star $S$ in $X_1^{(1)}$, the degree of $\tilde{S}$ in $\tilde{X}_1^{(1)}$ falls into at least one of the following categories:
\begin{enumerate}
\item $d(\tilde{S})\leq g(\kappa )$.
\item $d(\tilde{S})\in (g(i\kappa ),g((i+2)\kappa )]$ for $i=1,\ldots ,M-1$ and $i\neq m+1$.
\end{enumerate}

The contribution from the first category is bounded using Lemma \ref{single directed star} (with $\gamma =\kappa$ and $\rho =\delta '$), for sufficiently small $\kappa ,\varepsilon >0$,
\begin{align} \label{first category}
\liminf_{n\to\infty} & \frac{-\log \mathbb{E}\left[ \mathbb{P}\left( \max_{S\in \mathcal{S}, d(\tilde{S})\leq g(\kappa )} \| S\| \geq (1+\delta ')\lambda_{\alpha}\, | \, \tilde{X}^{(1)}\right) \mathds{1}_{\mathcal{K}_0}\right]}{\log n}\nonumber\\
& \geq -1+(1+\delta ')^{\alpha}\frac{2}{\alpha -2}\left( 1-\frac{2}{\alpha}\right)^{\frac{\alpha}{2}}\kappa^{1-\frac{\alpha}{2}}-\varepsilon \kappa >(1+\delta )^2 -1.
\end{align}
The `$-1$' term comes from the union bound in the first inequality and the second inequality holds for small $\kappa >0$ since $\alpha >2$.

For the second category, apply Lemma \ref{modified lemma 5.7} (with $\gamma =i\kappa ,h=2\kappa$ and $\rho =\delta '$) for each $i=1,\ldots ,M-1$ with $i\neq m+1$:
\begin{align} \label{second category}
& \liminf_{n\to\infty} \frac{-\log \mathbb{E}\left[ \mathbb{P}\left( \max_{S\in\mathcal{S}, d(\tilde{S})\in (g(i\kappa ),g((i+2)\kappa )]}\| S\| \geq (1+\delta ')\lambda_{\alpha}\, | \, \tilde{X}^{(1)}\right) \mathds{1}_{\mathcal{K}_0}\right]}{\log n}\nonumber\\
& \geq -f_{\alpha ,\delta '}((i+2)\kappa )-2\kappa -\kappa -\varepsilon (i+2)\kappa\nonumber\\
& \geq (1+\delta ')^2 -1-3\kappa -\varepsilon (M+1)\kappa \geq (1+\delta ')^2 -1-3\kappa -\varepsilon \left( \frac{(1+\delta )^2}{\kappa}+2\right)\kappa =: L.
\end{align}
Since all stars in $X_1^{(1)}$ are covered by these two categories, a union bound of \eqref{first category} and \eqref{second category} yields
\begin{align} \label{stars}
\liminf_{n\to\infty}\frac{-\log \mathbb{E}[\mathbb{P}(\| Z_1^{(1)}\|\geq (1+\delta ')\lambda_{\alpha} \, | \, \tilde{X}^{(1)})\mathds{1}_{\mathcal{K}_0}]}{\log n}\geq \min \lbrace (1+\delta )^2-1, L\rbrace .
\end{align}
Applying the bounds \eqref{event k0}, \eqref{39}, and \eqref{stars} to \eqref{z1 split} yields
\begin{align}
\liminf_{n\to\infty}\frac{-\log \mathbb{P}\left( \| Z^{(1)}\| \geq (1+\delta )\left( 1-\frac{\varepsilon^{\frac{1}{\alpha}}}{B_{\alpha}}\right) \lambda_{\alpha}\right)}{\log n} \geq \min \lbrace (1+\delta )^2-1, L\rbrace .
\end{align}
Combining this with \eqref{light tail split} and \eqref{37} we obtain
\begin{align}
\liminf_{n\to\infty}\frac{-\log \mathbb{P}(\| Z\| \geq (1+\delta )\lambda_{\alpha})}{\log n}\geq \min \lbrace (1+\delta )^2-1, L\rbrace .
\end{align}
Since $\lim_{\varepsilon \to 0}\delta '=\delta$ and $L$ is sufficiently close to $(1+\delta )^2-1$ for small $\varepsilon ,\kappa >0$, the proof is complete.

\subsection{Lower tail estimate}

Here we prove the following lower tail estimate:

\ltlt*

\subsubsection{Lower bound for the lower tail}

Define
\begin{align} \label{54}
\gamma_{\delta}' := (1-\delta )^2 \left( 1-\frac{2}{\alpha}\right) .
\end{align}
We start estimating the lower tail probability for a group of directed stars. The proof is almost identical to the corresponding case in \cite{ghn22}. Some slight changes will be conditioning on $\tilde{X}^{(1)}$ instead of $X^{(1)}$ for the application of structural lemmas, and using directed stars instead of undirected ones. 
The following is a directed version of \cite[Lemma 5.10]{ghn22}:

\begin{lemma}[{\cite[Lemma 5.10]{ghn22}}] \label{modified lemma 5.10}
Suppose that $\gamma >0$, $h\geq 0$ and $\kappa \geq \gamma -1$. Let $\mathcal{S}$ be the collection of at most $n^{1-\gamma -\kappa}$ vertex-disjoint weighted directed stars of size less than $g(\gamma +h)$. Assume that edge-weights are i.i.d. Weibull distributions with shape parameter $\alpha >2$ conditioned to be greater than $(\varepsilon \log \log n)^{\frac{1}{\alpha}}$ in absolute value. Then, for any $0<\rho <1$,
\begin{align}
\limsup_{n\to\infty}\frac{1}{\log n}\left( \log \log \frac{1}{\mathbb{P}(\max_{S\in \mathcal{S}}\| S\| \leq (1-\rho )\lambda_{\alpha})}\right) \leq f_{\alpha ,-\rho }(\gamma +h) +\kappa +h +\varepsilon (\gamma +h) .
\end{align}
\end{lemma}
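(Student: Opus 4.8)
The plan is to use vertex-disjointness to reduce the statement to a single directed star. Since the stars in $\mathcal S$ occupy disjoint vertex sets, they involve disjoint collections of edge-weights, so the events $\{\|S\|\le(1-\rho)\lambda_\alpha\}$, $S\in\mathcal S$, are mutually independent. Hence, writing $q_n:=\sup_{S\in\mathcal S}\mathbb P(\|S\|>(1-\rho)\lambda_\alpha)$,
\[
\mathbb P\Big(\max_{S\in\mathcal S}\|S\|\le(1-\rho)\lambda_\alpha\Big)=\prod_{S\in\mathcal S}\mathbb P\big(\|S\|\le(1-\rho)\lambda_\alpha\big)\ge(1-q_n)^{|\mathcal S|}.
\]
Once we know $q_n\to0$ (checked below), the bound $-\log(1-x)\le2x$ for $x\in[0,\tfrac12]$, together with $|\mathcal S|\le n^{1-\gamma-\kappa}$, gives
\[
\log\log\frac{1}{\mathbb P(\max_{S\in\mathcal S}\|S\|\le(1-\rho)\lambda_\alpha)}\le\log\!\big(2|\mathcal S|q_n\big)\le\log2+(1-\gamma-\kappa)\log n+\log q_n,
\]
so the whole statement reduces to an upper bound on $q_n$.

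For a single $S\in\mathcal S$, write $a_1,\dots,a_s$ and $b_1,\dots,b_t$ for the weights on the out-edges and in-edges at its center, where $s+t<g(\gamma+h)$. By Lemma \ref{directed star}, $\|S\|^2=\big(\sum_i a_i^2\big)\vee\big(\sum_i b_i^2\big)$, so a union bound over the two cases, combined with the observation that adjoining further nonnegative terms only enlarges a sum (so the extremal edge-count is $g(\gamma+h)$), yields
\[
\mathbb P\big(\|S\|>(1-\rho)\lambda_\alpha\big)\le2\,\mathbb P\Big(\tilde Y_1^2+\cdots+\tilde Y_{g(\gamma+h)}^2\ge(1-\rho)^2\lambda_\alpha^2\Big),
\]
where $\tilde Y_1,\tilde Y_2,\dots$ are i.i.d.\ Weibull$(\alpha)$ conditioned to exceed $(\varepsilon\log\log n)^{1/\alpha}$ in absolute value. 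Applying the Weibull-sum tail estimate \eqref{152} with $d=1-\rho$ and $b=\gamma+h$ then gives
\[
q_n\le n^{-(1-\rho)^{\alpha}\frac{2}{\alpha-2}\left(1-\frac{2}{\alpha}\right)^{\alpha/2}(\gamma+h)^{1-\alpha/2}+\varepsilon(\gamma+h)+o(1)}.
\]
Since $\alpha>2$ and $\gamma+h$ stays in a bounded range, for $\varepsilon$ small the exponent is strictly negative, so indeed $q_n\to0$ and the first step is justified.

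Substituting this into the earlier inequality and dividing by $\log n$, the $\limsup$ of $\frac{1}{\log n}\log\log\frac{1}{\mathbb P(\cdots)}$ is at most $1-\gamma-\kappa-(1-\rho)^{\alpha}\frac{2}{\alpha-2}(1-\frac{2}{\alpha})^{\alpha/2}(\gamma+h)^{1-\alpha/2}+\varepsilon(\gamma+h)$. Using the identity $1-\gamma=f_{\alpha,-\rho}(\gamma+h)+h+(1-\rho)^{\alpha}\frac{2}{\alpha-2}(1-\frac{2}{\alpha})^{\alpha/2}(\gamma+h)^{1-\alpha/2}$, this rearranges to $f_{\alpha,-\rho}(\gamma+h)+h-\kappa+\varepsilon(\gamma+h)$, which is at most $f_{\alpha,-\rho}(\gamma+h)+\kappa+h+\varepsilon(\gamma+h)$ since $\kappa\ge0$ in the regime where the lemma is applied (for $\gamma\ge1$ this is forced by $\kappa\ge\gamma-1$; for $\gamma<1$ one takes $\kappa$ a small positive constant, as in the event $\mathcal P_\kappa$).

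The argument is a line-by-line transcription of \cite[Lemma 5.10]{ghn22}, replacing undirected stars by directed stars and the undirected star-norm identity by Lemma \ref{directed star}. The one step requiring genuine care is the single-star estimate: reducing $\|S\|$ to a sum of i.i.d.\ conditioned Weibull squares via Lemma \ref{directed star}, confirming that $g(\gamma+h)$ edges is the worst case (monotonicity of the rate in the number of edges, which uses $\alpha>2$), and propagating the conditioning $\{|\tilde Y_i|>(\varepsilon\log\log n)^{1/\alpha}\}$ through \eqref{152} so that the $+\varepsilon(\gamma+h)$ correction enters with the correct sign. The remaining manipulations merely assemble independent events.
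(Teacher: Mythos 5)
Your proof is correct and follows the same strategy as the paper: reduce each directed star to a sum of conditioned Weibull squares via Lemma \ref{directed star}, bound the single-star tail by \eqref{152}, and then aggregate across the (independent, since vertex-disjoint) stars; the paper simply defers this aggregation step to \cite[Lemma 5.10]{ghn22}, whereas you write it out. One small remark: your calculation actually produces the sharper exponent $f_{\alpha,-\rho}(\gamma+h)+h-\kappa+\varepsilon(\gamma+h)$, with $-\kappa$ rather than the $+\kappa$ in the statement; as you note this implies the stated bound whenever $\kappa\geq 0$, which covers every invocation of the lemma in the paper (and the constraint $\kappa\geq\gamma-1$ only forces nonnegativity for $\gamma\geq1$), so there is no real loss -- but be aware you have proved something slightly stronger than (and not literally equivalent to) the quoted bound for $\kappa<0$.
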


\begin{proof}
As in the proof of Lemma \ref{single directed star}, choose $\lbrace \tilde{Y}_i\rbrace$ to be i.i.d. conditioned Weibull distributions.
By Lemma \ref{directed star} and the tail bound \eqref{152} with $d=1-\rho$ and $b=\gamma +h$,
\begin{align*}
\mathbb{P}(\| S\| \geq (1-\rho )\lambda_{\alpha}) & \leq 2\mathbb{P}\left( \tilde{Y}_1^2 +\cdots +\tilde{Y}_{g(\gamma +h)}^2 \geq (1-\rho )^2 \lambda_{\alpha}^2\right) .
\end{align*}
Given this, rest of the proof is identical to that of \cite[Lemma 5.10]{ghn22}.
\end{proof}

Now we begin the proof of the lower bound of Theorem \ref{ltlt}. Define the event $\mathcal{B}_{\delta}$ measurable with respect to $X$:
\begin{align}
\mathcal{B}_{\delta}:= \left\lbrace \| X\| \leq (1+\delta )\frac{(\log n)^{\frac{1}{2}}}{(\log \log n)^{\frac{1}{2}}}\right\rbrace .
\end{align}
Also define the event
\begin{align}
\mathcal{R}_{\kappa} := \lbrace \deg (\tilde{X}^{(1)}) < g(1+\kappa )\rbrace .
\end{align}
Now we define the following event measurable with respect to $\lbrace X, \tilde{X}^{(1)}\rbrace$:
\begin{align}
\mathcal{K}_1 := \mathcal{B}_{\delta} \cap \mathcal{W} \cap \mathcal{C}_{\varepsilon ,(1+\delta )^2-1}\cap \mathcal{E}_{(1+\delta )^2-1}\cap \mathcal{P}_{\kappa}\cap \mathcal{R}_{\kappa}.
\end{align}
Lemma \ref{graph eigenvalue} implies $\lim_{n\to\infty}\mathbb{P}(\mathcal{B}_{\delta})=1$. Since $\tilde{X}^{(1)}$ is stochastically dominated by $\mathcal{G}(n,\frac{d}{n})$, Lemma \ref{degrees} implies $\lim_{n\to\infty}\mathbb{P}(\mathcal{R}_{\kappa})=1$. Applying \eqref{events}, 
\begin{align} \label{59}
\lim_{n\to\infty}\mathbb{P}(\mathcal{K}_1)=1.
\end{align}
Since $\| Z\| \leq \| Z_1^{(1)}\| +\| Z_2^{(1)}\| +\| Z^{(2)}\|$, defining $\delta '$ by
\begin{align} \label{60}
1-\delta =(1-\delta ') +(1+\delta )\left( \varepsilon +\frac{\varepsilon^{\frac{1}{\alpha}}}{B_{\alpha}}\right) ,
\end{align}
we have
\begin{multline} \label{conditional k1}
\mathbb{P}(\| Z\| \leq (1-\delta )\lambda_{\alpha})  \geq \mathbb{E}\left[ \mathbb{P} \left( \| Z_1^{(1)}\| \leq (1-\delta ')\lambda_{\alpha}, \| Z_2^{(1)}\| \leq (1+\delta )\varepsilon \lambda_{\alpha}, \right. \right. \\
\left. \left. \| Z^{(2)}\| \leq (1+\delta )\frac{\varepsilon^{\frac{1}{\alpha}}}{B_{\alpha}}\lambda_{\alpha}\, \Big | \, X, \tilde{X}^{(1)}\right) \mathds{1}_{\mathcal{K}_1}\right] .
\end{multline}
Conditioned on the event $\mathcal{B}_{\delta}$, hence on $\mathcal{K}_1$,
\begin{align}
\| Z^{(2)}\| \leq (\varepsilon \log \log n)^{\frac{1}{\alpha}}\| X\| \leq (1+\delta )\frac{\varepsilon^{\frac{1}{\alpha}}}{B_{\alpha}}\lambda_{\alpha}.
\end{align}
Since $Z_1^{(1)}$, $Z_2^{(1)}$, and $X$ are conditionally independent given $\tilde{X}^{(1)}$, the conditional probability in the right hand side of \eqref{conditional k1} is
\begin{align}
& \mathbb{P}\left( \| Z_1^{(1)}\| \leq (1-\delta ')\lambda_{\alpha} , \| Z_2^{(1)}\| \leq (1+\delta )\varepsilon \lambda_{\alpha}\, | \, X, \tilde{X}^{(1)}\right)\nonumber\\
= & \, \mathbb{P}\left( \| Z_1^{(1)}\| \leq (1-\delta ')\lambda_{\alpha}, \| Z_2^{(1)}\| \leq (1+\delta )\varepsilon \lambda_{\alpha}\, | \, \tilde{X}^{(1)}\right)\nonumber\\
= & \, \mathbb{P} \left( \| Z_1^{(1)}\| \leq (1-\delta ')\lambda_{\alpha}\, | \, \tilde{X}^{(1)}\right) \mathbb{P} \left( \| Z_2^{(1)}\| \leq (1+\delta )\varepsilon \lambda_{\alpha}\, | \, \tilde{X}^{(1)}\right) .
\end{align}

\emph{Contribution from $Z_2^{(1)}$.} Since the event $\mathcal{K}_1$ implies $\mathcal{C}_{\varepsilon ,(1+\delta )^2-1}\cap \mathcal{E}_{(1+\delta )^2-1}$, using the high probability event $\mathcal{F}_{1+\delta}$, Proposition \ref{infinity} implies
\begin{align*}
\mathbb{P}\left( \| Z_2^{(1)}\| \geq (1+\delta )\varepsilon \lambda_{\alpha}\, | \, \tilde{X}^{(1)}\right) \leq n^{-\delta +o(1)}.
\end{align*}
This implies, for large $n$,
\begin{align} \label{65}
\mathbb{P}\left( \| Z_2^{(1)}\| \leq (1+\delta )\varepsilon \lambda_{\alpha}\, | \, \tilde{X}^{(1)}\right) \geq \frac{1}{2}.
\end{align}

\emph{Contribution from $Z_1^{(1)}$.} Define the events
\begin{align*}
\mathcal{J}_0 & := \left\lbrace \max_{\substack{S\in \mathcal{S} \\ d(S)\leq g(\kappa )}}\| S\| \leq (1-\delta ')\lambda_{\alpha}\right\rbrace ,\\
\mathcal{J}_i & := \left\lbrace \max_{\substack{S\in \mathcal{S} \\ d(S)\in (g(i\kappa ), g((i+1)\kappa )]}}\| S\| \leq (1-\delta ')\lambda_{\alpha}\right\rbrace \quad \text{for $1\leq i\leq m-1$},\\
\mathcal{J}_m & := \left\lbrace \max_{\substack{S\in \mathcal{S} \\ d(S) \in (g(m\kappa ),g((m+2)\kappa )]}}\| S\| \leq (1-\delta ')\lambda_{\alpha}\right\rbrace ,
\end{align*}
which controls the contribution from directed stars with similar degrees. Recall that $m$ is an integer satisfying $m\kappa <1\leq (m+1)\kappa$ so $(m+2)\kappa >1+\kappa$. Conditioned on $\mathcal{R}_{\kappa}$, and thus on $\mathcal{K}_1$, $\bigcap_{i=0}^{m}\mathcal{J}_i$ implies $\| Z_1^{(1)}\| \leq (1-\delta ')\lambda_{\alpha}$. Since a directed star $S\in \mathcal{S}$ satisfies $d_1(S)\leq d_1(\tilde{S})$, the number of directed stars are controlled by the event $\mathcal{P}_{\kappa}$ in the same way as in \cite{ghn22}. Thus the following lower bound estimates for $\mathbb{P}(\mathcal{J}_i \, | \, \tilde{X}^{(1)})$ are identical, and \cite[(104)]{ghn22} holds:
\begin{align}
\mathbb{P}\left( \| Z_1^{(1)}\| \leq (1-\delta ')\lambda_{\alpha}\, | \, \tilde{X}^{(1)}\right) \geq \exp \left( -\left( \frac{1}{\kappa}+2\right) n^{1-(1-\delta ')^2 +3\kappa +\varepsilon (1+2\kappa )+o(1)}\right) .
\end{align}
Applying this and \eqref{65} to \eqref{conditional k1},
\begin{align}
\mathbb{P}(\| Z\| \leq (1-\delta )\lambda_{\alpha}) \geq \frac{1}{2}\exp \left( n^{1-(1-\delta ')^2+3\kappa +\varepsilon (1+2\kappa )+o(1)}\right) \mathbb{P}(\mathcal{K}_1).
\end{align}
Using \eqref{59}, $\lim_{\varepsilon \to 0}\delta '=\delta$, and taking $\kappa ,\varepsilon >0$ sufficiently small, we obtain the desired bound.

\subsubsection{Upper bound for the lower tail}

Recall the event $\mathcal{A}_{\gamma ,\rho}$ defined in Section \ref{lower bound for the upper tail}. By \eqref{32},
\begin{align}
\mathbb{P}\left( \mathcal{A}_{\gamma_{\delta}',\rho}^c\right) \leq e^{-n^{1-\gamma_{\delta '}+o(1)}}.
\end{align}
On the event $\mathcal{A}_{\gamma_{\delta}',\rho}$, there exists $m=\lceil n^{1-\gamma_{\delta '}-\rho}\rceil$ columns $Z_{\cdot \sigma (1)}\ldots ,Z_{\cdot \sigma (m)}$ with at least $\lceil \gamma_{\delta}'\frac{\log n}{\log \log n}\rceil$ nonzero coordinates. Since
\begin{align}
\| Z\| \geq \max_{k=1,\ldots ,m}\| Z_{\cdot \sigma (k)}\| =\max_{k=1,\ldots ,m}\sum_{i=1}^{n}Z_{i\sigma (k)}^2 ,
\end{align}
we have
\begin{align} \label{69}
\mathbb{P}(\| Z\| \leq (1-\delta )\lambda_{\alpha} ) \leq \mathbb{E}\left[ \mathbb{P}\left( \max_{k=1,\ldots ,m}\sum_{i=1}^{n}Z_{i\sigma (k)}^2 \leq (1-\delta )^2 \lambda_{\alpha}^2 \, | \, X\right) \mathds{1}_{\mathcal{A}_{\gamma_{\delta}',\rho}}\right] +\mathbb{P}(\mathcal{A}_{\gamma_{\delta}',\rho }^c) .
\end{align}
Conditioned on the event $\mathcal{A}_{\gamma_{\delta}',\rho}$, using the tail estimates for Weibull distributions in \eqref{151}, estimating as in \cite[Section 5.2.2]{ghn22},
\begin{align}
\mathbb{P}\left( \max_{k=1,\ldots ,m}\sum_{i=1}^{n}Z_{i\sigma (k)}^2 \leq (1-\delta )^2 \lambda_{\alpha}^2 \, | \, X\right) & \leq \left( 1-n^{-(1-\delta )^{\alpha}\frac{2}{\alpha -2}(1-\frac{2}{\alpha})^{\frac{\alpha}{2}}(\gamma_{\delta}')^{1-\frac{\alpha}{2}}+o(1)}\right)^m\nonumber\\
& \leq \exp \left( -n^{1-\gamma_{\delta}'-\rho -(1-\delta )^{\alpha}\frac{2}{\alpha -2}(1-\frac{2}{\alpha})^{\frac{\alpha}{2}}(\gamma_{\delta}')^{1-\frac{\alpha}{2}}+o(1)}\right) \nonumber\\
& \leq \exp \left( -n^{1-(1-\delta )^2-\rho +o(1)}\right) .
\end{align}
Note that by definition \eqref{54}, $\gamma_{\delta}' <(1-\delta )^2$, so $\exp (-n^{1-(1-\delta )^2-\rho +o(1)})$ is dominant in \eqref{69}, and we obtain the desired result by taking $\rho \to 0$.

\section{Heavy-tailed weights}

Here we consider heavy-tailed weights with $0<\alpha \leq 2$.
We will simply write $\lambda_{\alpha} =\lambda_{\alpha}^{\mathrm{heavy}}=(\log n)^{\frac{1}{\alpha}}$ in this section.

\subsection{Upper tail estimate}

Recall the following upper tail estimate for heavy-tailed weights:

\htut*

\subsubsection{Lower bound for the upper tail}

Define the event that the number of edges in the digraph $X$ is not unusually small:
\begin{align} \label{m}
\mathcal{M} := \left\lbrace |E(X)| \geq \frac{dn}{2}\right\rbrace .
\end{align}
Then
\begin{align*}
\mathbb{P}(\| Z\| \geq (1+\delta )\lambda_{\alpha}) \geq \mathbb{E}\left[ \mathbb{P}(\| Z\| \geq (1+\delta )\lambda_{\alpha}\, | \, X)\mathds{1}_{\mathcal{M}}\right] .
\end{align*}
Since the number of missing edges $n^2-E(X)$ has distribution $\mathrm{Binom}\, (n^2, 1-\frac{d}{n})$, Lemma \ref{entropy} and \ref{entropy lower bound} yield the following estimate for some constant $c>0$:
\begin{align} \label{mc bound}
\mathbb{P}(\mathcal{M}^c) =\mathbb{P}\left( n^2-|E(X)| >n^2-\frac{dn}{2}\right) \leq e^{-n^2 I_{1-\frac{d}{n}}(1-\frac{d}{2n})}\leq e^{-cn} .
\end{align}
By conditioning on $\mathcal{M}$ and using the independence of edge weights,
\begin{align}
\mathbb{P}\left( \max_{(i,j)\in E(X)}|Z_{ij}|\geq (1+\delta )\lambda_{\alpha}\, | \, X\right) \geq 1-\left( 1-Cn^{-(1+\delta )^{\alpha}}\right)^{\frac{dn}{2}}\geq n^{1-(1+\delta )^{\alpha}+o(1)}.
\end{align}
Combining the bounds above, we have the desired estimate.

\subsubsection{Upper bound for the upper tail}

We apply the same decomposition of $Z$ into $Z^{(1)}+Z^{(2)}$ defined for the heavy-tailed case. Then we have the following analogue of Lemma \ref{light tail negligible}:

\begin{lemma} \label{heavy tail negligible}
For any $\delta >0$,
\begin{align*}
\liminf_{n\to\infty}\frac{-\log \mathbb{P}\left( \| Z^{(2)}\| \geq (1+\delta )(\varepsilon\log n)^{\frac{1}{\alpha}}\right)}{\log n}\geq (1+\delta )^2-1 .
\end{align*}
\end{lemma}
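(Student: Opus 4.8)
The plan is to deduce this directly from Lemma~\ref{graph ldp}, essentially repeating the proof of Lemma~\ref{light tail negligible} with one extra comparison of scales that is where the hypothesis $\alpha\le 2$ enters. First I would record the entrywise bound for the truncated matrix: writing $Z^{(2)}_{ij}=X_{ij}Y^{(2)}_{ij}$ with $Y^{(2)}_{ij}=Y_{ij}\mathds{1}_{|Y_{ij}|\le(\varepsilon\log\log n)^{1/\alpha}}$ as in the light-tailed decomposition, we have $|Z^{(2)}_{ij}|\le(\varepsilon\log\log n)^{1/\alpha}X_{ij}$ for all $i,j$. Since $X$ has nonnegative entries, monotonicity of the spectral norm under entrywise domination of absolute values (the same elementary fact used in the proof of Lemma~\ref{light tail negligible}) gives
\begin{align*}
\|Z^{(2)}\|\le(\varepsilon\log\log n)^{1/\alpha}\,\|X\|.
\end{align*}

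Next, with $t_n=\log n/\log\log n$, on the event $\{\|Z^{(2)}\|\ge(1+\delta)(\varepsilon\log n)^{1/\alpha}\}$ the previous display forces $\|X\|\ge(1+\delta)t_n^{1/\alpha}$. Because $0<\alpha\le 2$ we have $1/\alpha\ge 1/2$, and $t_n\to\infty$, so $t_n^{1/\alpha}\ge t_n^{1/2}$ for all large $n$; hence
\begin{align*}
\mathbb{P}\bigl(\|Z^{(2)}\|\ge(1+\delta)(\varepsilon\log n)^{1/\alpha}\bigr)\le\mathbb{P}\bigl(\|X\|\ge(1+\delta)t_n^{1/2}\bigr)
\end{align*}
for large $n$, and Lemma~\ref{graph ldp} yields $\liminf_{n\to\infty}\frac{-\log\mathbb{P}(\|X\|\ge(1+\delta)t_n^{1/2})}{\log n}\ge(1+\delta)^2-1$, which is the assertion.

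I do not expect a real obstacle here: the statement is a one-line corollary of Lemma~\ref{graph ldp}. The only point needing care is that after dividing the target scale $(\varepsilon\log n)^{1/\alpha}$ by the truncation level $(\varepsilon\log\log n)^{1/\alpha}$ one is still left with a quantity at least of order $t_n^{1/2}$, which is exactly where $\alpha\le 2$ is used. (For $\alpha>2$ this comparison fails, and instead one compares against $\lambda_\alpha^{\mathrm{light}}$, as in Lemma~\ref{light tail negligible}.)
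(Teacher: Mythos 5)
Your proof is correct and follows exactly the same route as the paper: bound $\|Z^{(2)}\|\le(\varepsilon\log\log n)^{1/\alpha}\|X\|$, use $\alpha\le 2$ to pass from $t_n^{1/\alpha}$ to $t_n^{1/2}$, and conclude with Lemma~\ref{graph ldp}. No discrepancies.
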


\begin{proof}
Note that $\| Z^{(2)}\| \leq (\varepsilon \log \log n)^{\frac{1}{\alpha}}\| X^{(2)}\| \leq (\varepsilon \log \log n)^{\frac{1}{\alpha}}\| X\|$ and  
\begin{align*}
\mathbb{P} \left( \| Z^{(2)}\| \geq (1+\delta )(\varepsilon \log n)^{\frac{1}{\alpha}}\right) \leq \mathbb{P}\left( \| X\| \geq \frac{(1+\delta )(\log n)^{\frac{1}{\alpha}}}{(\log \log n)^{\frac{1}{\alpha}}}\right) \leq \mathbb{P}\left( \| X\| \geq \frac{(1+\delta )(\log n)^{\frac{1}{2}}}{(\log \log n)^{\frac{1}{2}}}\right) .
\end{align*}
Applying Lemma \ref{graph ldp} finishes the proof.
\end{proof}

Here we present a directed version of \cite[Proposition 6.2]{ghn22} where the maximum clique size $k=2$. The original proposition involves a variational function $\phi_{\theta}$ defined as
\begin{align} \label{phi}
\phi_{\theta}(k) =\sup_{v=(v_1,\cdots ,v_k),\| v\|_1=1}\sum_{i,j\in [k], i\neq j}|v_i|^{\theta}|v_j|^{\theta}.
\end{align}
Since we are interested only in the case when $k=2$, it suffices to know the following property from \cite[(24)]{ghn22}:
\begin{align} \label{phi2}
\phi_{\theta}(2)=\frac{1}{2^{2\theta -1}}.
\end{align}

\begin{remark}
\upshape{
In the proof of \cite[Proposition 6.2]{ghn22}, the condition $\alpha <2$ was used only to apply the second inequality in \cite[Lemma 3.5]{ghn22} with $\theta =\frac{\beta}{2}>1$ where $\beta$ is the H\"older conjugate of $\alpha$. However, as stated in the lemma, the inequality actually holds for $\theta \geq 1$, so the same proof applies to the boundary (Gaussian) case $\alpha =2$. Therefore we shall apply \cite[Proposition 6.2]{ghn22} for $0<\alpha \leq 2$ in proving the proposition below.
}
\end{remark}

\begin{proposition} \label{heavy tail proposition}
Consider a connected directed network $G$ that satisfies
\begin{enumerate}
\item $d_1(G) \leq c_1\frac{\log n}{\log \log n}$,
\item $|V(G)| \leq \frac{c_2}{\varepsilon}\frac{\log n}{\log \log n}$,
\item $|E(\tilde{G})| \leq |V(\tilde{G})| +c_3$,
\item $|S(G)|\leq c_4$,
\end{enumerate}
where $\tilde{G}$ is the symmetrization of $G$ and $S(G)$ is the number of self-loops in $G$.
Suppose that the edge-weights are given by i.i.d. Weibull distributions with shape parameter $0<\alpha \leq 2$ conditioned to be greater than $(\varepsilon \log \log n)^{\frac{1}{\alpha}}$ in absolute value. Let $\rho >0$ and $0<\xi <(2c_3+c_4+2)^{\frac{1}{\beta}}\wedge \frac{1}{2}$ be constants, where $\beta$ is the H\"older conjugate of $\alpha$. For $\tau := (2c_3+c_4+2)^{\frac{1}{2\beta}}\xi^{\frac{1}{2}}$,
\begin{align} \label{66}
\mathbb{P}\left( \| G\| \geq \rho^{\frac{1}{\alpha}}\lambda_{\alpha}\right) \leq n^{-2^{-\alpha}\tau^{-\alpha}\rho +2c_2+o(1)}+n^{-\rho (1-\tau )^{\alpha}+c_1\xi^{-2}\varepsilon +o(1)}.
\end{align}
\end{proposition}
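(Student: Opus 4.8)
The plan is to transport \cite[Proposition 6.2]{ghn22} from undirected to directed networks via the clique reduction of Proposition \ref{reduction of cliques}, in exactly the way Proposition \ref{infinity} was deduced in the light-tailed case. First I would let $H=H^G$ be the undirected network produced by applying Proposition \ref{reduction of cliques} to $G$, with components $H_1,\dots,H_m$. Since $\|G\|\le\|H\|=\max_{1\le i\le m}\|H_i\|$ and each $H_i$ is an undirected network with nonnegative edge-weights, $\|H_i\|=\lambda_1(H_i)$, so it suffices to bound $\mathbb{P}(\lambda_1(H_i)\ge\rho^{1/\alpha}\lambda_\alpha)$ uniformly in $i$ and then pay a union-bound factor of $m$.

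Next I would verify that each $H_i$ satisfies the hypotheses of \cite[Proposition 6.2]{ghn22} with maximum clique size $k=2$. By properties (1)--(4) of Proposition \ref{reduction of cliques} together with Lemma \ref{e-v lemma}, we get $d_1(H_i)\le d_1(G)\le c_1\frac{\log n}{\log\log n}$, $|V(H_i)|\le 2|V(G)|\le\frac{2c_2}{\varepsilon}\frac{\log n}{\log\log n}$, $|E(H_i)|-|V(H_i)|\le 2(|E(\tilde G)|-|V(\tilde G)|)+S(G)\le 2c_3+c_4$, and the maximum clique of $H_i$ has size $2$. Crucially, the edge-weights of $H_i$ are a rearrangement, through a fixed bijection of edge sets independent of the weights, of those of $G$, hence remain i.i.d.\ Weibull with shape $\alpha$ conditioned to exceed $(\varepsilon\log\log n)^{1/\alpha}$ in absolute value, so the distributional hypothesis of \cite[Proposition 6.2]{ghn22} holds verbatim. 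Finally $m\le 2|V(G)|=O(\frac{\log n}{\log\log n})=n^{o(1)}$, so the union-bound factor is absorbed into the $o(1)$ in the exponents.

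It then remains to specialize \cite[Proposition 6.2]{ghn22} to $k=2$ and to the structural constants $2c_2$ and $2c_3+c_4$ obtained above; the quantity $2c_3+c_4+2$ appearing in $\tau$ and in the admissible range $0<\xi<(2c_3+c_4+2)^{1/\beta}\wedge\frac12$ is accounted for by the nullity $|E(H_i)|-|V(H_i)|+1$ of $H_i$ together with the clique parameter $k=2$ entering \cite{ghn22}. The variational term of \cite[Proposition 6.2]{ghn22} involves $\phi_{\beta/2}(2)$, which by \eqref{phi2} equals $2^{1-\beta}$; combined with the H\"older-conjugacy identity $(2^{1-\beta})^{1-\alpha}=2$ this collapses to the explicit exponent $2^{-\alpha}\tau^{-\alpha}\rho$ with $\tau=(2c_3+c_4+2)^{1/(2\beta)}\xi^{1/2}$, while the contribution of a single dominant edge-weight gives the term $n^{-\rho(1-\tau)^\alpha+c_1\xi^{-2}\varepsilon+o(1)}$. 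Summing the two contributions over the $n^{o(1)}$ components and using $\|G\|\le\max_i\lambda_1(H_i)$ yields \eqref{66}. One should also invoke the remark preceding the statement to apply \cite[Proposition 6.2]{ghn22} legitimately at the boundary case $\alpha=2$.

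The main obstacle is bookkeeping rather than conceptual: matching the structural constants through Proposition \ref{reduction of cliques} and through the internal parameters of \cite[Proposition 6.2]{ghn22} so that the exponents of \eqref{66}---in particular the precise form of $\tau$, the admissible range of $\xi$, and the error term $c_1\xi^{-2}\varepsilon$---come out exactly as stated. A secondary point, already noted above, is that one must check the conditioning ``$|Y_{ij}|\ge(\varepsilon\log\log n)^{1/\alpha}$'' is preserved \emph{jointly} (both independence and the common law) under the edge rearrangement of Proposition \ref{reduction of cliques}(4), which holds because that rearrangement is a fixed bijection not depending on the weights.
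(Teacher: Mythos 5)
Your proposal follows the paper's proof essentially verbatim: apply the clique reduction of Proposition \ref{reduction of cliques} to obtain an undirected network $H^G$ with maximum clique size $2$, verify the structural constants $(c_1, 2c_2/\varepsilon, 2c_3+c_4)$ for each component, specialize \cite[Proposition 6.2]{ghn22} to $k=2$ using $\phi_{\beta/2}(2)=2^{1-\beta}$ and the simplification $2^{-1}\phi_{\beta/2}(2)^{1-\alpha}=1$, and union-bound over the $n^{o(1)}$ components. Two small remarks: you have the two terms of \eqref{66} attributed backwards (in \cite[Proposition 6.2]{ghn22} it is the $\phi$-term that collapses to $n^{-\rho(1-\tau)^\alpha+c_1\xi^{-2}\varepsilon+o(1)}$, while the $n^{-2^{-\alpha}\tau^{-\alpha}\rho+2c_2+o(1)}$ term is separate); and you omit the bookkeeping step that \cite[Proposition 6.2]{ghn22} yields $\xi$ instead of $\tau$ when $0<\alpha\le 1$, which the paper reconciles by noting $\tau>\xi$ (a consequence of $\xi<(2c_3+c_4+2)^{1/\beta}$) so both cases are dominated by the displayed bound with $\tau$.
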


\begin{proof}
We proceed as in the proof of Proposition \ref{infinity} and consider the absolute values of edge-weights. Apply Proposition 3.5 to $G$ to get an undirected network $H^G$ with components $H_1^G, \ldots ,H_{\eta}^G$. Then each $H_i^G$ ($1\leq i\leq \eta$) satisfy the following structural properties:
\begin{gather*}
 d_1(H_i^G) \leq d_1(H^G)\leq d_1(G) \leq c_1\frac{\log n}{\log \log n},\\
 |V(H_i^G)| \leq |V(H^G)| \leq 2|V(G)| \leq \frac{2c_2}{\varepsilon}\frac{\log n}{\log \log n},\\
 |E(H_i^G)|-|V(H_i^G)| \leq 2(|E(\tilde{G})|-|V(\tilde{G})|) +S(G) \leq 2c_3+c_4.
\end{gather*}
Now $H_i^G$ is an undirected network with maximum clique size $2$, so we apply \cite[Proposition 6.2]{ghn22} with $k=2$. For $1<\alpha \leq 2$, using \eqref{phi2},
\begin{align*}
\mathbb{P}\left( \lambda_1 (H_i^G)\geq \rho^{\frac{1}{\alpha}}\lambda_{\alpha}\right) & \leq n^{-2^{-\alpha}\tau^{-\alpha}\rho +2c_2+o(1)}+n^{-2^{-1}\phi_{\beta /2}(2)^{1-\alpha}(1-\tau )^{\alpha}\rho +c_1\xi^{-2}\varepsilon +o(1)}\\
& = n^{-2^{-\alpha}\tau^{-\alpha}\rho +2c_2+o(1)}+n^{-\rho (1-\tau )^{\alpha} +c_1\xi^{-2}\varepsilon +o(1)}.
\end{align*}
For $0<\alpha \leq 1$,
\begin{align*}
\mathbb{P}\left( \lambda_1 (H_i^G)\geq \rho^{\frac{1}{\alpha}}\lambda_{\alpha}\right) \leq n^{-2^{-\alpha }\xi^{-\alpha}\rho +2c_2+o(1)}+n^{-\rho (1-\xi )^{\alpha}+c_1\xi^{-2}\varepsilon +o(1)}.
\end{align*}
By our assumption that $\xi <(2c_3+c_4+2)^{\frac{1}{\beta}}$, $\tau >\xi$ and for $0<\alpha \leq 2$,
\begin{align} \label{tau xi}
\mathbb{P}\left( \lambda_1 (H_i^G)\geq \rho^{\frac{1}{\alpha}}\lambda_{\alpha}\right) & \leq n^{-2^{-\alpha}(\tau \vee \xi )^{-\alpha}\rho +2c_2+o(1)}+n^{-\rho (1-\tau \vee \xi )^{\alpha}+c_1\xi^{-2}\varepsilon +o(1)}\nonumber\\
& = n^{-2^{-\alpha}\tau^{-\alpha}\rho +2c_2+o(1)}+n^{-\rho (1-\tau )^{\alpha}+c_1\xi^{-2}\varepsilon +o(1)}.
\end{align}
Since $H_i^G$ is an undirected network with nonnegative edge-weights, its largest eigenvalue equals the spectral norm and
\begin{align} \label{hg eta}
\| H^G\| =\max_{1\leq i\leq \eta}\| H_i^G\| =\max_{1\leq i\leq \eta}\lambda_1 (H_i^G).
\end{align}
Since $\eta =O\left( \frac{\log n}{\log \log n}\right) =n^{o(1)}$, a union bound with \eqref{tau xi} and \eqref{hg eta} finishes the proof.
\end{proof}

\noindent \emph{Proof of upper bound of the upper tail.}
Since $\| Z\| \leq \| Z^{(1)}\| +\| Z^{(2)}\|$,
\begin{align} \label{heavy tail split}
\mathbb{P}(\| Z\| \geq (1+\delta )\lambda_{\alpha}) \leq \mathbb{P}\left( \| Z^{(1)}\| \geq (1+\delta ')\lambda_{\alpha}\right) +\mathbb{P}\left( \| Z^{(2)}\| \geq \varepsilon^{\frac{1}{\alpha}}(1+\delta )\lambda_{\alpha}\right) 
\end{align}
where $\delta '$ is defined as
\begin{align}
1+\delta =(1+\delta ')+\varepsilon^{\frac{1}{\alpha}}(1+\delta ) .
\end{align}
By Lemma \ref{heavy tail negligible},
\begin{align} \label{heavy tail z2 estimate}
\mathbb{P}\left( \| Z^{(2)}\| \geq \varepsilon^{\frac{1}{\alpha}}(1+\delta )\lambda_{\alpha}\right) \leq n^{1-(1+\delta )^2 +o(1)},
\end{align}
so $Z^{(2)}$ is spectrally negligible.

Let $\delta_0 =(1+\delta )^{\alpha}-1$ and define the following event:
\begin{align*}
\mathcal{K}_2 := \mathcal{D}_{\delta_0 }\cap \mathcal{C}_{\varepsilon ,\delta_0}\cap \mathcal{E}_{\delta_0}.
\end{align*}
By Lemmas \ref{d}, \ref{c}, \ref{e}, 
\begin{align} \label{21}
\mathbb{P} (\mathcal{K}_2^c) \leq n^{-\delta_0 +o(1)}.
\end{align} 
Conditioned on $\mathcal{K}_2$, let $C_1,\ldots ,C_m$ be the components of $X^{(1)}$ and $Z_1^{(1)},\ldots ,Z_m^{(1)}$ be the corresponding subnetworks of $Z^{(1)}$. As in the light-tailed case, let $\mathcal{F}_{\delta_4}^k$ be the event that the number of self-loops in $C_k$ is less than $\delta_4$, and $\mathcal{F}_{\delta_4}=\bigcap_{k=1}^{m}\mathcal{F}_{\delta_4}^k$. Applying \eqref{f estimate} with $\delta_4=1+\delta_0$,
\begin{align} \label{heavy tail f}
\mathbb{P}\left( (\mathcal{F}_{1+\delta_0})^c\, | \, \tilde{X}^{(1)}\right) \mathds{1}_{\mathcal{K}_2} \leq n^{-\delta_0+o(1)}.
\end{align}
For each $k=1,\ldots ,m$, we apply Proposition \ref{heavy tail proposition} to each $Z_{k}^{(1)}$ with
\begin{align}
c_1=c_2=1+\delta_0, \quad c_3=\delta_0, \quad \xi =\varepsilon^{\frac{1}{4}}, \quad \tau =(3\delta_0+3)^{\frac{1}{2\beta}}\varepsilon^{\frac{1}{8}}, \quad \rho =(1+\delta ')^{\alpha}.
\end{align}
Then the first term in \eqref{66} is negligible compared to the second term for small enough $\varepsilon >0$, and
\begin{align}
\mathbb{P}\left( \| Z_k^{(1)}\| \geq (1+\delta ')\lambda_{\alpha}\, | \, \tilde{X}^{(1)}\right) \mathds{1}_{\mathcal{K}_2\cap \mathcal{F}_{1+\delta_0}} \leq n^{-(1+\delta ')^{\alpha}(1-\tau )^{\alpha}+(1+\delta_0)\varepsilon^{1/2}+o(1)}.
\end{align}
Using \eqref{21}, \eqref{heavy tail f} with a union bound,
\begin{align} \label{77}
\mathbb{P}\left( \| Z^{(1)}\| \geq (1+\delta ')\lambda_{\alpha}\right) \leq n^{-\delta_0+o(1)}+n^{1-(1+\delta ')^{\alpha}(1-\tau )^{\alpha}+(1+\delta_0)\varepsilon^{1/2}+o(1)} \leq n^{-\delta_0+\zeta +o(1)}
\end{align}
for some $\zeta =\zeta (\varepsilon )>0$ where $\lim_{\varepsilon \to 0}\zeta =0$. Combining \eqref{heavy tail split}, \eqref{heavy tail z2 estimate}, \eqref{77} and taking $\varepsilon \to 0$ completes the proof.

\subsection{Lower tail estimates}

We finally prove the following lower tail estimate:

\htlt*

\subsubsection{Lower bound for the lower tail}

Define the $X$-measurable event
\begin{align}
\mathcal{B}_1 := \left\lbrace \| X\| \leq 2\frac{(\log n)^{\frac{1}{2}}}{(\log \log n)^{\frac{1}{2}}}\right\rbrace 
\end{align}
and the event $\mathcal{K}_3$ measurable with respect to $\lbrace X, \tilde{X}^{(1)}\rbrace$:
\begin{align}
\mathcal{K}_3 := \mathcal{D}_{\delta}\cap \mathcal{C}_{\varepsilon ,\delta}\cap \mathcal{E}_{\delta}\cap \mathcal{B}_1.
\end{align}
Lemma \ref{graph eigenvalue} implies $\lim_{n\to\infty}\mathbb{P}(\mathcal{B}_1)=1$, and combining this with Lemmas \ref{d}, \ref{c}, and \ref{e}, we have for large $n$,
\begin{align} \label{80}
\mathbb{P}(\mathcal{K}_3) \geq \frac{1}{2}.
\end{align}
Since $\| Z\| \leq \| Z^{(1)}\| +\| Z^{(2)}\|$, setting
\begin{align}
\delta '' := \delta +\varepsilon^{\frac{1}{\alpha}},
\end{align}
we have
\begin{align} \label{heavytail33}
\mathbb{P}(\| Z\| \leq (1-\delta )\lambda_{\alpha}) \geq \mathbb{E}\left[ \mathbb{P}\left( \| Z^{(1)}\| \leq (1-\delta '')\lambda_{\alpha}, \| Z^{(2)}\| \leq \varepsilon^{\frac{1}{\alpha}}\lambda_{\alpha}\, | \, X,\tilde{X}^{(1)}\right) \mathds{1}_{\mathcal{K}_3}\right] .
\end{align}
Under the event $\mathcal{B}_1$, hence under the event $\mathcal{K}_3$, for large enough $n$,
\begin{align}
\| Z^{(2)}\| \leq 2\frac{(\log n)^{\frac{1}{2}}}{(\log \log n)^{\frac{1}{2}}}\cdot (\varepsilon \log \log n)^{\frac{1}{\alpha}}\leq \varepsilon^{\frac{1}{\alpha}}\lambda_{\alpha}.
\end{align}
Since $X$ and $Z^{(1)}$ are conditionally independent given $\tilde{X}^{(1)}$, under the event $\mathcal{K}_3$,
\begin{align} \label{heavytail35}
\mathbb{P} & \left( \| Z^{(1)}\| \leq (1-\delta '') \lambda_{\alpha},\| Z^{(2)}\| \leq \varepsilon^{\frac{1}{\alpha}}\lambda_{\alpha}\, | \, X,\tilde{X}^{(1)}\right)\nonumber\\
& =\mathbb{P}\left( \| Z^{(1)}\| \leq (1-\delta '')\lambda_{\alpha}\, | \, X, \tilde{X}^{(1)}\right) =\mathbb{P}\left( \| Z^{(1)}\| \leq (1-\delta '')\lambda_{\alpha}\, | \, \tilde{X}^{(1)}\right) .
\end{align}
Combining \eqref{heavytail33} and \eqref{heavytail35},
\begin{align} \label{heavytail36}
\mathbb{P}(\| Z\| \leq (1-\delta )\lambda_{\alpha}) \geq \mathbb{E}\left[ \mathbb{P}\left( \| Z^{(1)}\| \leq (1-\delta '')\lambda_{\alpha}\, | \, \tilde{X}^{(1)}\right) \mathds{1}_{\mathcal{K}_3}\right] .
\end{align}
Let $C_1,\ldots ,C_m$ be the components of $X^{(1)}$ and $Z_1^{(1)},\ldots ,Z_m^{(1)}$ be the corresponding subnetworks of $Z^{(1)}$. Recall the events $\mathcal{F}_{\delta_4}$ defined in the proof of the upper tail. Under the event $\mathcal{K}_3\cap \mathcal{F}_{1+(1-\delta )^{\alpha}}$, we apply Proposition \ref{heavy tail proposition} to $Z_k^{(1)}$ for each $k=1,\ldots ,m$ with
\begin{align}
c_1=c_2=1+\delta ,\quad c_3=\delta ,\quad \xi =\varepsilon^{\frac{1}{4}},\quad \tau =( 2\delta +(1-\delta )^{\alpha}+3)^{\frac{1}{2\beta}}\varepsilon^{\frac{1}{8}}, \quad \rho =(1-\delta '')^{\alpha}.
\end{align}
Then the dominating term in \eqref{66} is the second term for sufficiently small $\varepsilon >0$, and
\begin{align*}
\mathbb{P}\left( \| Z_k^{(1)}\| \geq (1-\delta '' )\lambda_{\alpha}\, | \, \tilde{X}^{(1)}\right) \mathds{1}_{\mathcal{K}_3\cap \mathcal{F}_{1+(1-\delta )^{\alpha}}} & \leq   n^{-(1-\delta '')^{\alpha}(1-\tau )^{\alpha}+(1+\delta )\varepsilon^{1/2}+o(1)} \leq n^{-(1-\delta )^{\alpha}+\zeta +o(1)}
\end{align*}
for some $\zeta =\zeta (\varepsilon ) >0$ with $\lim_{\varepsilon \to 0}\zeta =0$. Using \eqref{f estimate},
\begin{align*}
\mathbb{P}\left( \| Z_k^{(1)}\| \geq (1-\delta '')\lambda_{\alpha}\, | \, \tilde{X}^{(1)}\right) \mathds{1}_{\mathcal{K}_3}\leq n^{-(1-\delta )^{\alpha}+o(1)}+n^{-(1-\delta )^{\alpha}+\zeta +o(1)}=n^{-(1-\delta )^{\alpha}+\zeta +o(1)}.
\end{align*}
Thus under the event $\mathcal{K}_3$,
\begin{align} \label{87}
\mathbb{P}\left( \| Z^{(1)}\| \leq (1-\delta '' )\lambda_{\alpha}\, | \, \tilde{X}^{(1)}\right) \geq \left( 1-n^{-(1-\delta )^{\alpha}+\zeta +o(1)}\right)^n \geq e^{-n^{1-(1-\delta )^{\alpha}+\zeta +o(1)}}.
\end{align}
Applying this and \eqref{80} to \eqref{heavytail36},
\begin{align}
\mathbb{P}(\| Z\| \leq (1-\delta )\lambda_{\alpha})\geq \frac{1}{2}e^{-n^{1-(1-\delta )^{\alpha}+\zeta +o(1)}}.
\end{align}
The proof follows by taking $\varepsilon \to 0$.

\subsubsection{Upper bound for the lower tail}

Recall the event $\mathcal{M} =\left\lbrace |E(X)| \geq \frac{dn}{2}\right\rbrace$ defined in \eqref{m}. 
\begin{align}
\mathbb{P}\left( \| Z\| \leq (1-\delta )\lambda_{\alpha}\right) \leq \mathbb{E}\left[ \mathbb{P}\left( \max_{(i,j)\in E(X)}|Y_{ij}|\leq (1-\delta )\lambda_{\alpha}\, | \, X\right) \mathds{1}_{\mathcal{M}}\right] +\mathbb{P}(\mathcal{M}^c) .
\end{align}
Using the tail behavior of Weibull distributions,
\begin{align}
\mathbb{P}\left( \max_{(i,j)\in E(X)}|Y_{ij}| \leq (1-\delta )\lambda_{\alpha}\, | \, X\right) \mathds{1}_{\mathcal{M}}\leq \left( 1-Cn^{-(1-\delta )^{\alpha}}\right)^{\frac{dn}{2}}\leq e^{-n^{1-(1-\delta )^{\alpha}+o(1)}}.
\end{align}
Combining this with the bound for $\mathbb{P}(\mathcal{M}^c)$ in \eqref{mc bound}, the proof is complete.

\section{Appendix}

Here we restate some useful results regarding the tail behavior of sums of Weibull distributions and estimates for the relative entropy functional from the appendix of \cite{ghn22}.

\subsection{Tail estimates for sums of Weibull distributions}

We consider tail bounds for the sum of squares of i.i.d. Weibull random variables $\lbrace Y_i\rbrace_{i=1,2,\cdots}$ and the conditioned variables $\tilde{Y}_i =Y_i \mathds{1}_{|Y_i| >(\varepsilon \log \log n)^{\frac{1}{\alpha}}}$.
Recall that for a Weibull random variable $W$ there exists constants $C_1, C_2>0$ such that
\begin{align*}
C_1 e^{-t^{\alpha}}\leq \mathbb{P}(| W|\geq t)\leq C_2 e^{-t^{\alpha}}
\end{align*}
for all $t>0$. As in the previous sections we denote the typical value for light-tailed distributions by
\begin{align*}
\lambda_{\alpha}^{\mathrm{light}} = \left( \frac{2}{\alpha}\right)^{\frac{1}{\alpha}} \left( 1-\frac{2}{\alpha}\right)^{\frac{1}{2}-\frac{1}{\alpha}}\frac{(\log n)^{\frac{1}{2}}}{(\log \log n)^{\frac{1}{2}-\frac{1}{\alpha}}} .
\end{align*}

\begin{lemma}[{\cite[Lemma 7.1]{ghn22}}] \label{weibull lt estimate}
Suppose that $\alpha >2$, $t=d^2 \left( \lambda_{\alpha}^{\mathrm{light}}\right)^2 +o \left( \frac{\log n}{\log \log n}\right)$ and $k=b\frac{\log n}{\log \log n}+o\left( \frac{\log n}{\log \log n}\right)$ for some constants $b,d>0$. Then,
\begin{align} \label{151}
\lim_{n\to\infty} -\frac{\log \mathbb{P}(Y_1^2 +\cdots +Y_k^2 \geq t)}{\log n} =d^{\alpha}\frac{2}{\alpha -2}\left( 1-\frac{2}{\alpha}\right)^{\frac{\alpha}{2}}b^{1-\frac{\alpha}{2}}
\end{align}
and
\begin{align} \label{152}
\lim_{n\to\infty} -\frac{\log \mathbb{P}\left( \tilde{Y}_1^2 +\cdots +\tilde{Y}_k^2 \geq t\right)}{\log n} =d^{\alpha}\frac{2}{\alpha -2}\left( 1-\frac{2}{\alpha}\right)^{\frac{\alpha}{2}}b^{1-\frac{\alpha}{2}}-b\varepsilon .
\end{align}
\end{lemma}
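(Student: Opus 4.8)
The plan is to establish matching bounds
\[
\mathbb{P}\bigl( Y_1^2+\cdots +Y_k^2 \ge t\bigr) = n^{-R+o(1)},
\]
where $R$ denotes the right-hand side of \eqref{151}. Write $t_n=\frac{\log n}{\log\log n}$, so that $k=(b+o(1))t_n$; since $(\lambda_\alpha^{\mathrm{light}})^2\asymp \frac{\log n}{(\log\log n)^{1-2/\alpha}}\gg t_n$, the hypothesis on $t$ gives $t=(d^2+o(1))(\lambda_\alpha^{\mathrm{light}})^2$, and in particular $t/k\to\infty$. The guiding picture is that $Z_i:=Y_i^2$ has a super-exponential tail $\mathbb{P}(Z_i\ge z)\asymp e^{-z^{\alpha/2}}$ with exponent $\alpha/2>1$, so (by convexity of $z\mapsto z^{\alpha/2}$) the cheapest way to realize $\sum_{i\le k}Z_i\ge t$ is to push \emph{every} summand up to size $\approx t/k$, at total cost $\asymp k(t/k)^{\alpha/2}$. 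Plugging in the explicit value of $\lambda_\alpha^{\mathrm{light}}$ together with the elementary identity $\tfrac{2}{\alpha-2}\bigl(1-\tfrac2\alpha\bigr)^{\alpha/2}=2(\alpha-2)^{\alpha/2-1}\alpha^{-\alpha/2}$, one checks that $k(t/k)^{\alpha/2}=(R+o(1))\log n$, which is the target exponent for both directions.

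\emph{Lower bound.} Since $t/k\to\infty$, eventually $(t/k)^{1/2}\ge 1$, and the Weibull lower tail bound gives
\[
\mathbb{P}\Bigl(\textstyle\sum_{i=1}^{k}Y_i^2\ge t\Bigr)\ \ge\ \mathbb{P}\bigl(Y_i^2\ge t/k\ \text{for all }i\bigr)\ =\ \mathbb{P}\bigl(|Y_1|\ge (t/k)^{1/2}\bigr)^{k}\ \ge\ \bigl(C_1e^{-(t/k)^{\alpha/2}}\bigr)^{k}.
\]
Because $k=o(\log n)$, the prefactor $C_1^{k}$ is $n^{o(1)}$, while $k(t/k)^{\alpha/2}=(R+o(1))\log n$ by the computation above, so $\mathbb{P}(\cdots)\ge n^{-R+o(1)}$.

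\emph{Upper bound.} For every $\theta>0$, Chernoff's inequality and independence give $\mathbb{P}(\sum_{i\le k}Y_i^2\ge t)\le e^{-\theta t}\bigl(\mathbb{E}e^{\theta Y_1^2}\bigr)^{k}$, and the moment generating function is finite for all $\theta$ precisely because $\alpha>2$ forces $\alpha/2>1$. The analytic input is its large-$\theta$ asymptotics: applying Laplace's method to a tail-integral representation of $\mathbb{E}e^{\theta Y^2}$, whose exponent $\theta z-z^{\alpha/2}$ in the variable $z=y^2$ is maximized at $z^{\ast}=(2\theta/\alpha)^{2/(\alpha-2)}$, one obtains
\[
\log\mathbb{E}e^{\theta Y^2}=(1+o(1))\Bigl(1-\tfrac2\alpha\Bigr)\Bigl(\tfrac2\alpha\Bigr)^{2/(\alpha-2)}\theta^{\alpha/(\alpha-2)}\qquad(\theta\to\infty).
\]
Since $t/k\to\infty$, the minimizer $\theta=\theta_n$ of $-\theta t+k\log\mathbb{E}e^{\theta Y^2}$ tends to infinity; carrying out this one-variable Legendre transform yields exponent $-(R+o(1))\log n$ — equivalently, the Cramér rate function of $Z=Y^2$ satisfies $I^{\ast}(x)=(1+o(1))x^{\alpha/2}$ as $x\to\infty$ and $kI^{\ast}(t/k)=(R+o(1))\log n$, matching the lower bound. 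The Weibull tail is known only up to the constants $C_1,C_2$, but these affect $\log\mathbb{E}e^{\theta Y^2}$ (and the lower bound) by at most a bounded multiplicative factor, hence only by $n^{o(1)}$ after raising to the power $k=o(\log n)$; the $o(t_n)$ slack in $t$ and the rounding in $k$ are absorbed using continuity of $R$ in $(b,d)$.

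\emph{The conditioned statement, and the main obstacle.} For \eqref{152}, set $c_n:=(\varepsilon\log\log n)^{1/\alpha}$, so $\mathbb{P}(|Y|>c_n)=\Theta(e^{-\varepsilon\log\log n})=\Theta((\log n)^{-\varepsilon})$, and $\tilde Y_i$ has the law of $Y_i$ conditioned on $\{|Y_i|>c_n\}$. For $\varepsilon$ small, $t/k\gg c_n^2$, so in the ``all large'' lower bound $\mathbb{P}(\tilde Y_i^2\ge t/k)=\mathbb{P}(Y_i^2\ge t/k)/\mathbb{P}(|Y_i|>c_n)$ is larger by a factor $\Theta((\log n)^{\varepsilon})$; raised to the $k$-th power this is $(\log n)^{k\varepsilon}=n^{b\varepsilon+o(1)}$, since $k\varepsilon\log\log n=(b\varepsilon+o(1))\log n$. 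Symmetrically $\mathbb{E}e^{\theta\tilde Y^2}\le \mathbb{E}e^{\theta Y^2}/\mathbb{P}(|Y|>c_n)$ inflates the Chernoff bound by the same $n^{b\varepsilon}$ factor, so both bounds shift by $n^{b\varepsilon}$ and the rate becomes $R-b\varepsilon$. I expect the upper bound to be the hard part: one must justify the Laplace asymptotics of $\mathbb{E}e^{\theta Y^2}$ for a variable whose density is controlled only through its tail, with enough uniformity in the $n$-dependent $\theta$, and verify that the Chernoff bound is genuinely \emph{tight} — i.e.\ that no single-summand (``one big jump'') contribution beats the spread-out strategy. This tightness is exactly where $\alpha/2>1$, i.e.\ the light-tailed regime $\alpha>2$, is used, and it is also why the analogue fails and a variational formula appears once $\alpha\le 2$.
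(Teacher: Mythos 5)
The paper restates this lemma from~\cite{ghn22} without reproving it, so there is no in-paper argument to compare against; I assess your proposal on its own merits. Your approach is a Cram\'er/Chernoff argument in a moderate-deviations regime ($k\to\infty$, $t/k\to\infty$), and it is correct. I verified the algebraic identity $k(t/k)^{\alpha/2}=(R+o(1))\log n$, the Laplace asymptotics $\log\mathbb{E}e^{\theta Y^2}=(1+o(1))(\beta-1)\beta^{-\beta/(\beta-1)}\theta^{\beta/(\beta-1)}$ with $\beta=\alpha/2$ (your stated constant $(1-\tfrac2\alpha)(\tfrac2\alpha)^{2/(\alpha-2)}$ is an equivalent rewriting), and the Legendre transform calculation: at the optimum $\theta_n=(t/(kcp))^{\beta-1}$ the exponent collapses exactly to $-k(t/k)^{\alpha/2}$, matching $R$. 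Two clean-ups. First, you write ``for $\varepsilon$ small, $t/k\gg c_n^2$'', but both sides are $\Theta\bigl((\log\log n)^{2/\alpha}\bigr)$; what you need and have is merely $t/k> c_n^2$ for $\varepsilon$ below a fixed constant depending on $(\alpha,b,d)$, and the $\varepsilon$-smallness requirement is consistent with the implicit constraint $R>b\varepsilon$ needed for the right-hand side of~\eqref{152} to be positive. Second, as you flag, the step that needs to be made fully rigorous is that the Laplace upper bound for $\mathbb{E}e^{\theta Y^2}$ follows from the tail upper bound $\mathbb{P}(|Y|\ge s)\le C_2e^{-s^\alpha}$ alone: integrating by parts gives $\mathbb{E}e^{\theta Y^2}\le 1+e^\theta+C_2\theta\int_1^\infty e^{\theta z-z^{\alpha/2}}dz$, and since $\theta_n\asymp(\log\log n)^{1-2/\alpha}\to\infty$, a routine Laplace estimate shows the third term dominates with polynomial prefactor, contributing $O(k\log\theta_n)=o(\log n)$ to the exponent. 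With these details filled in, your argument gives both inequalities; the conditioned version then follows exactly as you indicate, because $\mathbb{P}(|Y|>c_n)^{-k}=n^{b\varepsilon+o(1)}$ is a $\theta$-independent multiplicative factor in both the Chernoff bound and the all-large lower bound.
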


\subsection{Estimates for relative entropy}

We define the relative entropy functional $I_p$ by
\begin{align}
I_p(q) := q\log \frac{q}{p} +(1-q)\log \frac{1-q}{1-p}.
\end{align}
The following lower bound estimate holds for $I_p$ under certain conditions.

\begin{lemma}[{\cite[Lemma 7.6]{ghn22}}] \label{entropy lower bound}
There exists a constant $c>0$ such that for $0<p<1$ and $0<q\leq \frac{p}{2}$,
\begin{align*}
I_p(q) \geq cp.
\end{align*}
\end{lemma}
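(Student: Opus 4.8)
The plan is to turn this two-variable bound into a one-variable computation: first replace $I_p$ by a tractable lower bound, and then use monotonicity to reduce to the endpoint $q=p/2$.

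I would start from the elementary inequality $\log t\le t-1$. Applying it with $t=\tfrac{1-p}{1-q}$ gives
\[
(1-q)\log\frac{1-q}{1-p}=-(1-q)\log\frac{1-p}{1-q}\ge-(1-q)\left(\frac{1-p}{1-q}-1\right)=p-q,
\]
so that $I_p(q)\ge q\log\tfrac{q}{p}+p-q=:g_p(q)$ for all $q\in(0,1)$. (All logarithms are finite on the range of interest, since $0<q\le p/2<p<1$.)

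Next I would study $g_p$ on $(0,p)$. One computes $g_p'(q)=\log\tfrac{q}{p}<0$ for $q<p$, so $g_p$ is strictly decreasing there; in particular $g_p(q)\ge g_p(p/2)$ for every $q\in(0,p/2]$. A direct substitution gives $g_p(p/2)=\tfrac{p}{2}\log\tfrac12+p-\tfrac{p}{2}=\tfrac{p}{2}(1-\log 2)$. Since $1-\log 2>0$, this yields $I_p(q)\ge\tfrac{1-\log 2}{2}\,p$ uniformly over $0<q\le p/2$, and one may take $c=\tfrac{1-\log 2}{2}$.

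I do not anticipate a real obstacle; the only point worth flagging is that a crude Pinsker-type estimate $I_p(q)\ge 2(p-q)^2\ge p^2/2$ is \emph{not} of the required form $cp$ as $p\to0$, which is precisely why one needs the sharper, explicit argument above rather than a generic quadratic lower bound. It would also be natural to note in passing that $I_p$ itself is decreasing on $(0,p)$ (its derivative is $\log\tfrac{q(1-p)}{p(1-q)}<0$ there), which gives the alternative route of reducing directly to $I_p(p/2)$ before estimating; either way the computation is short.
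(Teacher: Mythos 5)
Your proof is correct. The paper itself does not supply a proof (it cites \cite[Lemma 7.6]{ghn22}), so there is no in-paper argument to compare against, but your reasoning is complete and self-contained: the bound $(1-q)\log\tfrac{1-q}{1-p}\ge p-q$ via $\log t\le t-1$ is applied correctly, the derivative $g_p'(q)=\log\tfrac qp$ is computed correctly, and the endpoint value $g_p(p/2)=\tfrac p2(1-\log 2)$ gives the explicit constant $c=\tfrac{1-\log 2}{2}$. Your side remark — that the alternative reduction via the monotonicity of $I_p$ itself on $(0,p)$ (from $I_p'(q)=\log\tfrac{q(1-p)}{p(1-q)}<0$) would work as well, and that a Pinsker-type bound $I_p(q)\ge 2(p-q)^2$ is insufficient because it scales like $p^2$ — is also accurate and shows you understand why a sharper argument is needed.
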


\begin{lemma}[{\cite[Lemma 3.3]{lz17}}] \label{entropy asymptotic}
If $0<p\ll q$ and $q\leq 1-p$ then
\begin{align*}
I_p(p+q) =(1+o(1))q\log \frac{q}{p}.
\end{align*}
\end{lemma}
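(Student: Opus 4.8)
The plan is to expand $I_p(p+q)$ into its two defining terms and to show that the first term alone produces the claimed asymptotics $q\log\frac qp$ while the second is negligible. Throughout I will use only the hypotheses $p\ll q$ and $q\le 1-p$; the first of these says $p/q\to 0$, equivalently $q/p\to\infty$, so that $\log\frac qp\to\infty$.

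Write $I_p(p+q)=A+B$ with
\[
A:=(p+q)\log\frac{p+q}{p},\qquad B:=(1-p-q)\log\frac{1-p-q}{1-p}.
\]
For $A$, I would factor $\frac{p+q}{p}=\frac qp\bigl(1+\frac pq\bigr)$, so that $\log\frac{p+q}{p}=\log\frac qp+\log\bigl(1+\frac pq\bigr)$; since $\frac pq\to 0$ the last term is $o(1)$, and as $\log\frac qp\to\infty$ this gives $\log\frac{p+q}{p}=(1+o(1))\log\frac qp$. Together with $p+q=q(1+o(1))$ this yields $A=(1+o(1))\,q\log\frac qp$.

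For $B$, the potential difficulty is that $\log\frac{1-p-q}{1-p}$ diverges as $q\to 1-p$, but it is controlled by the prefactor $1-p-q$. Setting $u:=\frac{q}{1-p}\in(0,1]$ (this is where $q\le 1-p$ enters), one has $1-p-q=(1-p)(1-u)$ and $\frac{1-p-q}{1-p}=1-u$, hence $B=(1-p)(1-u)\log(1-u)$. The elementary inequality $\log\frac{1}{1-u}\le\frac{1}{1-u}-1=\frac{u}{1-u}$ gives $\bigl|(1-u)\log(1-u)\bigr|\le u$, so $|B|\le (1-p)u=q$. Since $\log\frac qp\to\infty$, this means $B=o\bigl(q\log\frac qp\bigr)$.

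Combining the two estimates, $I_p(p+q)=A+B=(1+o(1))\,q\log\frac qp+o\bigl(q\log\frac qp\bigr)=(1+o(1))\,q\log\frac qp$, as claimed. The only mildly delicate point is the bound for $B$ when $q$ is close to $1-p$, and the uniform estimate $|B|\le q$ disposes of it without any case split; alternatively one may simply invoke \cite[Lemma 3.3]{lz17}.
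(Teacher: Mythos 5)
Your proof is correct. Note that the paper itself does not prove this lemma; it simply cites \cite[Lemma~3.3]{lz17}, so there is no in-text argument to compare against. Your self-contained derivation is sound: the decomposition $I_p(p+q)=A+B$, the expansion $\log\tfrac{p+q}{p}=\log\tfrac qp+\log\bigl(1+\tfrac pq\bigr)$ together with $p+q=(1+o(1))q$ and $\log\tfrac qp\to\infty$ give $A=(1+o(1))q\log\tfrac qp$, and the substitution $u=q/(1-p)\in(0,1]$ combined with the elementary bound $\log\tfrac{1}{1-u}\le\tfrac{u}{1-u}$ yields the uniform estimate $|B|\le q=o\bigl(q\log\tfrac qp\bigr)$, which neatly handles the endpoint behaviour as $q\to 1-p$ without any case analysis. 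This would serve as a clean replacement for the external citation.
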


The tail estimates for the Binomial distribution can be stated in terms of the entropy functional:
\begin{lemma}[{\cite[Lemma 4.7.2]{ash}}] \label{entropy}
For $m\in \mathbb{N}$ and $0<q<1$, let $X$ have distribution $\mathrm{Binom}(m,q)$.
\begin{enumerate}
\item If $q<\theta <1$,
\begin{align} \label{binom upper tail}
\frac{1}{\sqrt{8m\theta (1-\theta )}}e^{-mI_q(\theta )}\leq \mathbb{P}(X\geq \theta m) \leq e^{-mI_q(\theta )}.
\end{align}
\item If $0<\theta <q <1$,
\begin{align} \label{binom lower tail}
\frac{1}{\sqrt{8m\theta (1-\theta )}}e^{-mI_q(\theta )}\leq \mathbb{P}(X\leq \theta m) \leq e^{-mI_q(\theta )}.
\end{align}
\end{enumerate}
\end{lemma}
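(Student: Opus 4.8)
The plan is to treat Lemma~\ref{entropy} as the classical Chernoff--Cram\'er estimate for the Binomial law, whose large-deviation exponent is exactly the relative entropy $I_q$. The upper and lower inequalities in each part come from genuinely different arguments, so I would prove them separately, and part (2) follows from part (1) by the substitution $X\mapsto m-X$, which is $\mathrm{Binom}(m,1-q)$, together with the identity $I_q(\theta)=I_{1-q}(1-\theta)$: indeed $\mathbb{P}(X\leq\theta m)=\mathbb{P}(m-X\geq(1-\theta)m)$ and $1-\theta>1-q$ exactly when $\theta<q$.

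\textbf{Upper bounds.} For part (1), fix $\lambda>0$ and apply the exponential Markov inequality,
\[
\mathbb{P}(X\geq\theta m)\;\leq\;e^{-\lambda\theta m}\,\mathbb{E}\bigl[e^{\lambda X}\bigr]\;=\;e^{-\lambda\theta m}\bigl(1-q+qe^{\lambda}\bigr)^{m}.
\]
The exponent $-\lambda\theta+\log(1-q+qe^{\lambda})$ is strictly convex in $\lambda$, with minimizer $e^{\lambda}=\tfrac{\theta(1-q)}{q(1-\theta)}$, which lies in $(1,\infty)$ precisely because $\theta>q$. Substituting back uses $1-q+qe^{\lambda}=\tfrac{1-q}{1-\theta}$ and collapses the bound to $\bigl(\tfrac q\theta\bigr)^{\theta m}\bigl(\tfrac{1-q}{1-\theta}\bigr)^{(1-\theta)m}=e^{-mI_q(\theta)}$, which is the claimed inequality, and this argument needs no integrality of $\theta m$. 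Part (2) is then the $X\mapsto m-X$ version of this.

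\textbf{Lower bounds.} Here I would discard all but the single dominant term: take $k=\lceil\theta m\rceil$, so that $\mathbb{P}(X\geq\theta m)\geq\mathbb{P}(X=k)=\binom{m}{k}q^{k}(1-q)^{m-k}$ (since, for $\theta>q$, the terms of the tail sum are decreasing in the index). Insert Stirling with explicit error control, $j!=\sqrt{2\pi j}\,(j/e)^{j}e^{r_j}$ with $0\leq r_j\leq\tfrac{1}{12j}$; the powers of $e$ cancel, the algebraic factors combine to $\sqrt{\tfrac{m}{2\pi k(m-k)}}$, and the exponential factor equals $\exp\!\bigl(m[H(k/m)+(k/m)\log q+(1-k/m)\log(1-q)]\bigr)=e^{-mI_q(k/m)}$ with $H$ the binary entropy. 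Since $|k/m-\theta|\leq 1/m$ and $I_q'(\theta)=\log\tfrac{\theta(1-q)}{q(1-\theta)}$, a one-step Taylor estimate moves $mI_q(k/m)$ to $mI_q(\theta)$ and $\sqrt{k(m-k)/m}$ to $\sqrt{m\theta(1-\theta)}$ at the cost of $O(1)$ multiplicative constants; checking that the Stirling constant $\tfrac{1}{\sqrt{2\pi}}$, the residual $e^{r_m-r_k-r_{m-k}}$ and the rounding corrections together leave a factor at least $\tfrac{1}{\sqrt{8}}$ gives the stated bound. The lower-tail lower bound in part (2) is again the $X\mapsto m-X$ version.

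\textbf{Main obstacle.} The Chernoff bounds are routine once the tilt is optimized; the delicate point is the explicit constant $8$ in the matching lower bounds. One must track the Stirling error terms uniformly in $m$, and, more awkwardly, deal with the regime where $\theta m$ or $(1-\theta)m$ is of order $1$---so that $k$ or $m-k$ equals $0$ or $1$ and the Stirling asymptotics degenerate---by a short direct estimate of $\mathbb{P}(X=k)$ in that case. Since this is a standard textbook inequality, in the paper it is simply quoted from \cite{ash}, and a fully self-contained proof would just make the estimates sketched above explicit.
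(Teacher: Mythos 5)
The paper does not prove this statement; it is quoted verbatim as \cite[Lemma 4.7.2]{ash}, so there is no in-paper argument to compare against. Your outline is the standard textbook derivation --- exponential tilting (Chernoff) for the upper bound, a single-term Stirling estimate for the lower bound, and the substitution $X\mapsto m-X$ together with $I_q(\theta)=I_{1-q}(1-\theta)$ to pass between parts (1) and (2) --- and the Chernoff computation and the symmetry reduction are both carried out correctly.

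There is, however, one genuine gap in the lower-bound step, namely the claim that replacing $I_q(k/m)$ by $I_q(\theta)$ ``at the cost of $O(1)$ multiplicative constants'' is harmless. When $\theta m$ is not an integer and $k=\lceil\theta m\rceil$, you have $k/m>\theta$, hence $I_q(k/m)>I_q(\theta)$ and $e^{-mI_q(k/m)}<e^{-mI_q(\theta)}$: the replacement goes the \emph{wrong} way for a lower bound, and the correction is $m\bigl(I_q(k/m)-I_q(\theta)\bigr)\le I_q'(\xi)$ for some $\xi\in(\theta,k/m)$. Since $I_q'(\xi)=\log\frac{\xi(1-q)}{q(1-\xi)}$ is unbounded as $q\to0$ at fixed $\theta$, this is not $O(1)$ uniformly, and the stated lower bound in fact \emph{fails} for non-integer $\theta m$ and small $q$: taking $m=2$, $\theta=\tfrac12+\epsilon$, one has $\mathbb{P}(X\ge\theta m)=\mathbb{P}(X=2)=q^2$, while $e^{-2I_q(\theta)}\sim q^{1+2\epsilon}\gg q^{2}$ as $q\to0$. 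The correct reading of Ash's lemma (and the form under which your Stirling argument closes cleanly, without any Taylor step) is that $\theta m$ is an integer, i.e., $\theta\in\{1/m,\dots,(m-1)/m\}$. With that restriction, your argument goes through; you already flag, correctly, that nailing the explicit constant $\sqrt{8}$ requires tracking the Robbins error terms $r_j$ and handling the degenerate cases $k\in\{1,m-1\}$ by a direct check, since the crude bound $e^{r_m-r_k-r_{m-k}}>e^{-1/6}$ alone is just short of $\sqrt{2\pi/8}$. Note also that the paper only ever invokes the \emph{upper} inequalities in \eqref{binom upper tail}--\eqref{binom lower tail}, which hold for all real $\theta$, so this integrality caveat is immaterial to the paper's use of the lemma.
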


Finally we prove a special case of the binomial tail estimate that has been used multiple times throughout the paper.
\begin{lemma} \label{binomial loglog estimate}
Let $a,d,\delta >0$ be constants. Then the following estimate holds:
\begin{align}
\mathbb{P}\left( \mathrm{Binom}\left( a\frac{\log n}{\log \log n},\frac{d}{n}\right) \geq \delta \right) \leq n^{-\delta +o(1)}.
\end{align}
\end{lemma}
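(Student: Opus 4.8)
The plan is to use a first-moment (union bound over which trials succeed) argument rather than invoking the entropy estimates, since for a fixed threshold this is the cleanest route. Write $m := \lceil a\frac{\log n}{\log\log n}\rceil$ and $p := \frac{d}{n}$, and let $X \sim \mathrm{Binom}(m,p)$. Since $X$ is integer-valued, the event $\{X \geq \delta\}$ coincides with $\{X \geq \lceil\delta\rceil\}$, so it suffices to bound $\mathbb{P}(X \geq \lceil\delta\rceil)$.

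For the main step I would union-bound over the $\binom{m}{\lceil\delta\rceil}$ choices of which $\lceil\delta\rceil$ of the $m$ trials are successful, which gives
\[
\mathbb{P}(X \geq \lceil\delta\rceil) \leq \binom{m}{\lceil\delta\rceil} p^{\lceil\delta\rceil} \leq m^{\lceil\delta\rceil} p^{\lceil\delta\rceil} = (mp)^{\lceil\delta\rceil}.
\]
Since $mp = (1+o(1))\frac{ad\log n}{n\log\log n} = n^{-1+o(1)}$, raising to the power $\lceil\delta\rceil$ and using $\lceil\delta\rceil \geq \delta$ yields $\mathbb{P}(X \geq \delta) \leq n^{-\lceil\delta\rceil+o(1)} \leq n^{-\delta+o(1)}$, as claimed.

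As an alternative one could argue through Lemma \ref{entropy}: taking the threshold fraction $\theta := \lceil\delta\rceil/m$, one has $p = d/n \ll \theta$, so Lemma \ref{entropy asymptotic} gives $I_p(\theta) = (1+o(1))\,\theta\log(\theta/p)$ with $\log(\theta/p) = (1+o(1))\log n$; since $m\theta = \lceil\delta\rceil$ this gives $mI_p(\theta) = (1+o(1))\lceil\delta\rceil\log n$, and the upper bound in \eqref{binom upper tail} produces the same conclusion.

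There is essentially no obstacle in this lemma; the only point requiring (minor) care is the reduction from the real threshold $\delta$ to the integer $\lceil\delta\rceil$, together with the observation that replacing $\delta$ by $\lceil\delta\rceil$ only strengthens the inequality since $\lceil\delta\rceil \geq \delta$, so the bound $n^{-\delta+o(1)}$ is obtained with room to spare.
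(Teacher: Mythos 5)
Your union-bound argument is correct and takes a genuinely different (and more elementary) route than the paper. The paper proceeds through Lemma \ref{entropy}, i.e.\ the Chernoff-type bound $\mathbb{P}(X\geq \theta m)\leq e^{-mI_q(\theta)}$, and then evaluates the entropy $I_{d/n}\!\left(\frac{\delta\log\log n}{a\log n}\right)$ asymptotically via Lemma \ref{entropy asymptotic}; this keeps the proof consistent with the other entropy computations in the paper. Your first-moment calculation $\mathbb{P}(X\geq\lceil\delta\rceil)\leq\binom{m}{\lceil\delta\rceil}p^{\lceil\delta\rceil}\leq(mp)^{\lceil\delta\rceil}$ is fully self-contained, avoids any relative-entropy machinery, and in fact produces the slightly stronger exponent $\lceil\delta\rceil\geq\delta$ for free; the observation that $\{X\geq\delta\}=\{X\geq\lceil\delta\rceil\}$ for integer-valued $X$ is the small but correct point that makes this work. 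Your sketched entropy alternative also matches the paper's proof in spirit, so either route is fine; the union-bound version is arguably the cleaner one for a fixed constant threshold $\delta$.
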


\begin{proof}
Apply \eqref{binom upper tail} with $X\sim \mathrm{Binom}(m,q)$ where $m=a\frac{\log n}{\log \log n}$, $q=\frac{d}{n}$ and $\theta =\frac{\delta}{a}\frac{\log \log n}{\log n}$. Then
\begin{align*}
\mathbb{P}( X\geq \delta ) \leq e^{-\frac{a\log n}{\log \log n}I_{\frac{d}{n}}\left(\frac{\delta \log \log n}{a\log n}\right)} =e^{ -\frac{a\log n}{\log \log n}(1+o(1))\frac{\delta \log \log n}{a\log n}\log \frac{\delta n\log \log n}{ad\log n}}=e^{-(1+o(1))\delta \log n}=n^{-\delta +o(1)},
\end{align*}
where we used Lemma \ref{entropy asymptotic} in the first equality.
\end{proof}

\end{document}